\newcommand{\NN}{\mathbb{N}}
\newcommand{\RR}{\mathbb{R}}
\renewcommand{\tilde}{\widetilde}
\newcommand{\vare}{{\varepsilon}}
\newcommand{\vphi}{{\varphi}}
\DeclareMathOperator{\Jac}{Jac}
\DeclareMathOperator{\sech}{sech}
\DeclareMathOperator{\supp}{Supp}
\newtheorem{definition}{Definition}[section]
\newtheorem{theorem}{Theorem}[section]
\newtheorem{remark}{Remark}[section]
\newtheorem{lemma}[theorem]{Lemma}
\newtheorem{proposition}[theorem]{Proposition}
\pgfplotsset{compat=1.5}
\title{Full range of infinite point blow-up exponents \\ for the critical generalized KdV equation}
\author{Nailya Manatova
\\ \textsc{Laboratoire de mathématiques de Versailles}
\\ \textsc{Université de Versailles St-Quentin en Y., CNRS}
\\ \textsc{Université Paris-Saclay}
\\ nailya.manatova@uvsq.fr}
\begin{document}

\begin{abstract}
   For the quintic, mass critical generalized Korteweg-de Vries equation, for any $\nu \in (\frac 12, 1)$, we prove the existence of  solutions in the energy space that blow up in finite time $T>0$ with the blow-up rate
   $\|\partial_x u(t)\|_{L^2} \sim (T-t)^{-\nu}$ (infinite point blow-up). 
   These solutions are constructed arbitrarily close to the family of solitons and correspond to
   the concentration of a soliton traveling at $+\infty$ in space as $t\uparrow T$.

   This complements the previous results obtained in \cite{MMPIII} on infinite point exotic blow-up,
   which were valid under the technical restriction $\nu>\frac {11}{13}$.
   The value $\nu=\frac 12$ corresponds to a critical case to be treated elsewhere.
   
   At the technical level, we implement a modification of the virial-energy functional originally introduced in \cite{MMPI,MMPIII}, to allow all $\nu > \frac 12$ and simplify the proof of energy estimates.
\end{abstract}

\maketitle

\section{Introduction}

\subsection{Presentation of the problem}
We consider the $L^2$ critical generalized Korteweg-de Vries equation (gKdV)
\begin{equation}\label{gKdV_principal_eq}
    \partial_t U + \partial_x(\partial_{xx}U + U^5)=0, \qquad (t,x)\in [0,T) \times\RR.
\end{equation}
The equation being posed in $\RR$, in the following we write $H^1$ instead of $H^1(\RR)$
and similarly for other spaces of functions.

From the works of Kato \cite{Kato-1983} and of Kenig, Ponce and Vega \cite{KPV93}, the Cauchy problem is locally well-posed in the energy space $H^1$. For a given $U_0 \in H^1$, there exists a unique maximal solution $U$ of \eqref{gKdV_principal_eq} in a functional space included in $C([0,T), H^1)$, which satisfies in addition
\begin{equation}
    \text{if }\quad T < +\infty \quad \text{then} \quad \lim_{t \uparrow T} \|\partial_x U(t)\|_{L^2} = + \infty.
\end{equation}
If $T<+\infty$, then we say that the solution blows up in finite time.

For an $H^1$ solution, the mass and the energy, defined respectively by
\begin{equation}
    M(U)(t) = \int U^2 (t,x) \, dx, \qquad E(U)(t) = \frac 12 \int(\partial_x U)^2(t,x)\, dx - \frac 16 \int U^6(t,x)\, dx\,,
\end{equation}
are conserved along the evolution in time.
 
The equation is scaling and translation invariant: if $U$ is a solution, 
then for any $\lambda>0$, $\sigma \in \RR$,
\begin{equation}
    U_{\lambda,x_0}(t,x) := \lambda^{\frac 12}U(\lambda^3 t, \lambda x + \sigma)
\end{equation}
is also a solution to \eqref{gKdV_principal_eq}. The scaling symmetry leaves the norm $L^2$ invariant, so the problem is mass critical.

The family of solitary wave solutions of \eqref{gKdV_principal_eq} is given by 
\begin{equation}
    \Big\{ \lambda^{-\frac 12}_0 \,Q\big(\lambda^{-1}_0 (x - \lambda^{-2}_0 t - \sigma_0) \big)\;\big|\;  (\lambda_0,\sigma_0) \in (0,+\infty)\times \RR \Big\}
\end{equation}
where 
\[
Q(x) = \left( 3 \sech^2(2x) \right)^{\frac 14}
\]
is the unique (up to translation) positive solution of the equation
\begin{equation}\label{equation_of_Q}
    -Q'' + Q - Q^5 =0 \quad \text{on}\; \RR.
\end{equation}
The ground state $Q$ satisfies $E(Q) =0$ and is related to the following sharp Gagliardo-Nirenberg inequality (see for instance \cite{Weinstein-1983})
\begin{equation}\label{gagliardo-nirenberg}
    \frac 13\int \phi^6 \leq \bigg( \frac{\int \phi^2}{\int Q^2} \bigg)^2\, \int (\partial_x \phi)^2 \qquad \forall \phi \in H^1.
\end{equation}

From the mass and energy conservation and the above inequality, any $H^1$ initial data $U_0$ with subcritical mass $\|U_0\|_{L^2} < \|Q\|_{L^2}$ generates a global and bounded solution of \eqref{gKdV_principal_eq} in $H^1$. This result is sharp due to the existence of
a minimal mass blow-up solution.

Indeed, minimal mass blow-up for critical gKdV equation is addressed in \cite{MMPII}:
there exists a unique (up to invariances) blow-up solution $S$ of \eqref{gKdV_principal_eq} with minimal mass $\| S(t) \|_{L^2} = \|Q\|_{L^2}$. The solution $S$ blows up in finite time and satisfies $\|\partial_x S(t)\|_{L^2} \sim t^{-1}\|Q'\|_{L^2}$ as $t \downarrow 0$
(by convention, the blow-up time is chosen at $t=0$ for the particular solution $S$).

The work \cite{MMPI}, by  Martel, Merle and Rapha\"el, provides a description of the flow of the gKdV equation for initial data such that
\begin{equation}
    \|Q\|_{L^2} < \|U_0\|_{L^2} \leq \|Q\|_{L^2} + \vare, \qquad \vare \ll1
\end{equation}
\emph{i.e.} for masses slightly above the threshold $\|Q\|_{L^2}$,
and for $H^1$ initial data with a sufficient decay on the right in space.
In such case, only three scenarios can occur: \begin{enumerate}
    \item finite time blow-up with the rate $\|\partial_x U(t)\| \sim (T-t)^{-1}$ as $t \uparrow T$;
    \item global existence and local convergence to a final rescaled soliton in large time;
    \item finite time exit from the soliton neighborhood.
\end{enumerate}
The third case is not well-understood at this stage: in particular,
the long time behaviour of the solution 
after its exiting the neighbourhood of the soliton family remains an open question.
From \cite{MMPI}, it is also known that the first and third cases (blow-up and exit) 
are in a certain sense stable.
Moreover, in the work \cite{Nakanishi-16}, Martel, Merle, Nakanishi and Rapha\"el 
prove that those two stable regimes are separated by a $\mathcal{C}^1$ codimension-one manifold on which the unstable scenario occurs (convergence to a rescaled soliton). 

Because of the stability (in a certain sense) of the $(T-t)^{-1}$ blow-up,
the blow-up behaviors corresponding to $\|\partial_x U(t)\|_{L^2}\sim (T-t)^{-\nu}$ for
$\nu\neq 1$, or any other blow-up behavior, will be called "non generic" or
"exotic".
So far, for the mass critical gKdV equation, examples of non-generic blow-up rates have been constructed in the work \cite{MMPIII} of Martel, Merle, Rapha\"el. 
In particular, they have obtained finite time blow-up solutions for the continuum of rates $\nu>\frac{11}{13}$. The value $\frac{11}{13}$ is a technical threshold and it was not clear from the proof in \cite{MMPII} which other rates $\nu$ should be possible among
$\frac 13 \leq \nu \leq \frac {11}{13}$.
Recall that rates $0<\nu<\frac 13 $ are ruled out by a standard scaling argument and
the Cauchy theory. Moreover, from \cite{Martel-Merle-2002}, 
no blow-up occurs with $\nu =\frac13$ for solutions close to solitons.
Recall also that global solutions with grow-up in infinite time 
with exponential and all powers of polynomials blow-up rates
were also constructed in \cite{MMPIII}.

Since such behaviors do not enter the classification of \cite{MMPI}, the initial data
shall not satisfy the assumptions of \cite{MMPI}.
Actually, the exotic blow-up in \cite{MMPIII} are a consequence of an explicit
tail of the initial data. This tails produces a modification in the blow-up behavior
by acting as a nonlinear source term and modifying the ODE system of modulation parameters
(scale and translation). See \S \ref{S:strategy} for heuristics.

All the blow-up behaviors described above are obtained by concentration of a soliton
of the form
\[
U(t,x) \approx \frac{1}{\lambda^\frac12(t)}\, Q\left( \frac{x-\sigma(t)}{\lambda(t)}\right)
\]
where $\sigma(t)$ represents the position of the soliton and $\lambda(t)$ is related to its
scale. By a simple calculation, we have $\|\partial_x U(t)\|_{L^2}\approx \lambda^{-1}(t)$.
Moreover, it can be proved that $\sigma_t(t)\approx \lambda^{-2}(t)$,
which justifies why $\nu=\frac 12$ is critical in a
blow-up behavior of the form $\lambda(t)\approx (T-t)^\frac12$:
for $\nu>\frac 12$, blow-up occurs at an infinite point ($\lim_T \sigma=+\infty$), 
while for $\frac 13 <\nu<\frac 12$, blow-up occurs at a finite point 
($\lim_T \sigma<+\infty$).
In particular, all exotic blow-up in \cite{MMPIII} occur at an infinite point.
The only example so far of  finite point blow-up for the mass critical gKdV equation
is provided by the work \cite{Martel-Pilod-2024} of Martel, Pilod, with the blow-up rate $\nu = \frac{2}{5}$. 
Apart from this specific value, the existence of blow-up solutions with finite point blow-up, corresponding to the interval $\frac13<\nu <\frac12$ remains open.

\subsection{Main result}

In the present article,
we complete the study of the power finite-time blow-up
for the mass critical gKdV equation with infinite point blow-up,
by constructing initial data leading to blow-up with any rate $\nu \in (\frac 12, 1)$.
 
\begin{theorem}\label{Theorem_principal_result}
    Let $\nu \in (\frac 12, 1)$. For any $\delta >0$, there exist $T>0$ and $U_0 \in H^1(\RR)$ with $\|U_0 - Q\|_{H^1} \leq \delta$ such that the $H^1$ solution $U$ of \eqref{gKdV_principal_eq} with initial data $U_0$ exists on $[0,T)$ and blows up at the finite time $T$ with
    \begin{equation}
        \|\partial_x U(t)\|_{L^2} \sim (T-t)^{-\nu} \qquad \text{as}\quad t \to T^-.
    \end{equation}
\end{theorem}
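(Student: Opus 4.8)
The plan is to adapt the compactness scheme of \cite{MMPI,MMPIII}: build an explicit approximate blow-up solution equipped with a polynomially decaying tail on the right, solve \eqref{gKdV_principal_eq} backward from a sequence of times $t_n\uparrow T$ with final data equal to the approximate solution at $t_n$, prove $n$-uniform estimates on $[0,t_n]$ via a modulation-plus-bootstrap argument, and let $n\to+\infty$.

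\emph{Approximate profile and modulation equations.} First I would set up modulation parameters $\lambda(t)>0$, $\sigma(t)\in\RR$, $b(t)$, the rescaled time $s$ with $\frac{ds}{dt}=\lambda^{-3}$, and the variable $y=\frac{x-\sigma(t)}{\lambda(t)}$. Around the soliton one constructs a refined profile $Q_b=Q+bP_1+\dots+b^kP_k$, where the $P_j$ solve the localized elliptic hierarchy attached to $L=-\partial_{yy}+1-5Q^4$, so that $Q_b$ solves the renormalized equation up to an $O(b^{k+1})$ remainder; to it one appends a tail whose effect reflects a polynomially decaying tail $\sim\langle x\rangle^{-\gamma}$, $\gamma=\gamma(\nu)>\tfrac12$, imposed on $U_0$ as $x\to+\infty$ (so the tail lies in $H^1$). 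Writing the backward solution as $U(t,x)=\lambda^{-1/2}(t)\big(Q_b+(\mathrm{tail})+\varepsilon\big)\big(s,\tfrac{x-\sigma}{\lambda}\big)$ and imposing three orthogonality conditions on $\varepsilon$ against an appropriate triple spanning the generalized kernel of the linearized flow $\partial_y L$ (morally $Q$, $\Lambda Q$, $yQ$), the implicit function theorem fixes $(\lambda,\sigma,b)$ and projection of the equation on these directions yields the modulation system
\[
\Big|\tfrac{\lambda_s}{\lambda}+b\Big|+\Big|\tfrac{\sigma_s}{\lambda}-1\Big|\lesssim\|\varepsilon\|_{\mathrm{loc}}+O(b^{k+1}),\qquad\big|b_s+2b^2-\mathcal{S}_{\mathrm{tail}}\big|\lesssim\dots
\]
The decay exponent $\gamma$ is chosen precisely so that the tail source $\mathcal{S}_{\mathrm{tail}}$ is of the same order $b^2$ as the leading term, with coefficient $\tfrac{1-\nu}{\nu}$; the resulting effective law $b_s\approx-\tfrac{3\nu-1}{\nu}b^2$ has, unlike the unmodified $b_s+2b^2=0$ giving $\nu=1$, a blow-up solution with $b\sim\tfrac{\nu}{3\nu-1}s^{-1}$, hence $\lambda(t)\sim(T-t)^\nu$ and (from $\sigma_s/\lambda\approx1$) $\sigma(t)\sim(T-t)^{1-2\nu}\to+\infty$, an infinite blow-up point — consistent with $\nu>\tfrac12$, while $\nu=\tfrac12$ is singular because $\mathcal{S}_{\mathrm{tail}}$ then cancels against a leading $b^2$ term.

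\emph{Bootstrap and the modified energy--virial functional.} On a maximal interval $[t^\ast,t_n]$ one posits bootstrap bounds keeping $\lambda_n,b_n$ in tubes around their reference laws and $\|\varepsilon_n\|_{H^1_{\mathrm{loc}}}$ small relative to a fixed power of $b_n$, the point being to improve the $\varepsilon_n$ bound strictly. This rests on a Lyapunov functional of the shape
\[
\mathcal{F}_n=\int(\partial_y\varepsilon_n)^2+\int\varepsilon_n^2\,\psi_B-\tfrac13\!\int\!\Big[(Q_{b_n}{+}\mathrm{tail}{+}\varepsilon_n)^6-\dots-6(\dots)^5\varepsilon_n\Big]-b_n\!\int\zeta_B\,y\,\varepsilon_n^2,
\]
with $B=B(b_n)\to+\infty$ and suitably chosen weights $\psi_B,\zeta_B$, whose rescaled version satisfies a monotonicity inequality
\[
\frac{d}{ds}\Big(\frac{\mathcal{F}_n}{\lambda_n^{2\theta}}\Big)\leq-\frac{c\,b_n}{\lambda_n^{2\theta}}\int(\partial_y\varepsilon_n)^2e^{-|y|/B}+\frac{1}{\lambda_n^{2\theta}}\big(\mathrm{errors}\big),
\]
the errors arising from the $O(b_n^{k+1})$ profile remainder, the tail, the modulation terms and the $\varepsilon_n$-nonlinearity. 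Integrating in $s$, using coercivity of $\mathcal{F}_n$ modulo the orthogonality conditions, together with the standard Kato-type monotonicity of the mass to the right of the soliton (needed for the non-coercive right region and the interaction of $\varepsilon_n$ with the tail), one recovers an improved bound on $\|\varepsilon_n\|_{H^1}$; reinjecting it into the modulation system closes all bootstrap bounds strictly, so $t^\ast=0$. The modification over \cite{MMPI,MMPIII} — a different choice of $B(b)$ and of the weights, plus a correction term calibrated to the tail — is exactly what renders every error term absorbable by the dissipation for all $\nu>\tfrac12$ rather than just $\nu>\tfrac{11}{13}$, and simultaneously shortens the energy computation.

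\emph{Passage to the limit and main difficulty.} Choosing the final data at $t_n$ along the reference laws (with $\varepsilon_n(t_n)=0$ and small tail amplitude) puts $U_n(0)$ in a fixed $\delta$-ball around $Q$; the $n$-uniform bounds and local-in-space compactness give a subsequential limit $U$ solving \eqref{gKdV_principal_eq} on $[0,T)$ with $\|U(0)-Q\|_{H^1}\leq\delta$, inheriting the modulated decomposition, so $\|\partial_x U(t)\|_{L^2}=\lambda^{-1}(t)(1+o(1))\sim(T-t)^{-\nu}$. I expect the decisive obstacle to be the energy--virial step uniformly over $\nu\in(\tfrac12,1)$: as $\nu\downarrow\tfrac12$ the parameter $b_n\sim s^{-1}$ decays as slowly as it possibly can while $\sigma_n\to+\infty$, so the dissipative gain $b_n\|\partial_y\varepsilon_n\|_{\mathrm{loc}}^2$ is weak, the cutoff scale $B(b_n)$ is large, and the tail- and scaling-generated errors are of borderline size; designing the weights and the correction term in $\mathcal{F}_n$ so that the dissipation still dominates throughout $(\tfrac12,1)$ is the heart of the matter and the reason a new functional is required.
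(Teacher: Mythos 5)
Your overall strategy---insert a power-decaying tail on the right, modulate around a corrected profile, read the blow-up law off the perturbed $(\lambda,\sigma,b)$ ODE system, and close a bootstrap via a virial--energy functional---is the same skeleton the paper uses, and your formal dynamical-system computation ($b_s\approx-\tfrac1\beta b^2$, $\sigma\to+\infty$ for $\nu>\tfrac12$) is consistent. However there are two places where you diverge, one of which is a genuine gap.

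\textbf{Route difference (construction scheme).} You propose a backward compactness scheme: solve from final data at $t_n\uparrow T$ with $\varepsilon_n(t_n)=0$ and extract a limit. The paper instead constructs forward from $s_0$, and the one unstable direction (encoded by the auxiliary quantity $h(s)=\lambda^{1/2}-\frac{2c_0}{(1-\theta)\int Q}\sigma^{1-\theta}$, which must be driven to $0$ for blow-up rather than relaxation to a fixed soliton) is handled by a topological/continuity argument in the initial scaling parameter $\lambda_0$, chosen from a small interval $[\lambda_0^-,\lambda_0^+]$ so that the bootstrap estimate \eqref{BS3} on $h$ never saturates. Both schemes are used in the literature; the backward scheme requires uniform-in-$n$ bounds, while the forward scheme requires identifying the instability explicitly and proving outgoing transversality of the trap. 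Your proposal does not address how the unstable direction is dealt with in the backward scheme, but this is standard enough that I would not call it a gap by itself.

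\textbf{Genuine gap: the modification of the functional.} You correctly flag that the heart of the matter is making the monotonicity formula close for all $\nu>\tfrac12$, but your proposed fix is not the paper's and I do not see how it works. You suggest letting the weight scale $B=B(b_n)\to+\infty$ and appending a momentum-type correction $-b\int\zeta_B\,y\,\varepsilon^2$. The paper keeps $B$ a fixed (large) constant throughout. The problematic term is $s^{j-1}\int\varphi_B\,\varepsilon^2$, generated by the scaling part $-b\Lambda\varepsilon$ of the $\varepsilon$-equation after multiplying $\mathcal{F}$ by $s^j$; it is positive and cannot be absorbed by $s^j\int\varphi_B'\,\varepsilon^2$ alone, because $\varphi_B$ is unbounded while $\varphi_B'$ is bounded. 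The paper's cure is to add the \emph{scaling-critical} term $\lambda^k\int\varphi_k\,\varepsilon^2$ (with $\varphi_k\sim y^k$ for $y\gg1$) directly to $\mathcal{H}=s^j\mathcal{F}+\lambda^k\int\varphi_k\,\varepsilon^2$, whose derivative produces the dissipation $\lambda^k\int\varphi_k'\,(\varepsilon^2+\varepsilon_y^2)$; splitting the bad term at the scale $y=s^{(1+\beta)/2}$ shows it is absorbed by this dissipation together with $s^j\int\varphi_B'\,\varepsilon^2$, \emph{provided} $k>\tfrac{2(1+j)}{1-\beta}$. This constraint forces $k\to+\infty$ as $\nu\downarrow\tfrac12$, and it---not a varying $B$ nor a $b$-weighted momentum correction---is precisely what extends the construction from $\nu>\tfrac{11}{13}$ (where a fixed $k=10$ suffices, as in \cite{MMPIII}) to all $\nu>\tfrac12$. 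A related point: a functional with $B=B(b_n)\to\infty$ would degrade the virial coercivity constant (Lemma \ref{lemma-localized-virial-estimate} requires $B$ large but fixed), so that modification would actively hurt rather than help. Until you pin down the scaling-critical weight and the $k(\nu)$ threshold, the monotonicity step in your proposal does not close.

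A smaller point: in the paper the tail is kept at the level of the \emph{original} variables, $U=v+f$ with $f$ the gKdV evolution of $f_0$, and only the scalar correction $r(s)R(y)=\lambda^{1/2}f(\tau(s),\sigma(s))R(y)$ is rescaled into the profile $W=Q_b+rR$; your formula $U=\lambda^{-1/2}(Q_b+\mathrm{tail}+\varepsilon)(\cdot)$ places the whole tail inside the rescaling, which is not what the proof needs (the tail must remain essentially stationary while the soliton concentrates beneath it). Also, $Q_b$ in the paper is a first-order corrected profile $Q+bP\chi(|b|^\gamma y)$, not the full $b$-hierarchy you sketch; the extra orders are not needed here.
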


Theorem \ref{Theorem_principal_result} combined with \cite[Theorem 1]{MMPIII}
shows the existence of infinite point blow-up solutions with all possible rates $\nu>\frac 12$. These solutions are expected to be unstable, as it will be explained later, 
even if we do not provide a proof of this fact here.
Concerning infinite point blow-up, a relevant remaining open question concerns the rate $\nu =\frac12$. Work is currently in progress on this scenario, with a slow logarithmic infinite point blow-up. This will appear elsewhere.

The overall strategy of the proof of Theorem \ref{Theorem_principal_result}
follows the one in \cite{MMPIII} with two key differences:
\begin{enumerate}
    \item The explicit tail is used to reach all values of $\nu$ 
    and becomes quite small (large negative powers of $x$) as $\nu\downarrow \frac 12$.
    It is thus more delicate to make it effective to adjust the blow-up rate.
    In particular, in contrast with \cite{MMPIII}, one has to work in a weighted space
    whose weight depends on the value of $\nu$.
    \item The articles \cite{MMPI,MMPII,MMPIII} introduce a mixed virial-energy functional to prove crucial local energy estimates on the solutions.
    Here, we implement a modification of this virial-energy functional which simplifies the computations, reduces the number of bootstrap assumptions and clarifies the proof.
\end{enumerate}
We refer to \S \ref{S:strategy} for a detailed sketch of the proof.

\subsection{Some related results for other nonlinear dispersive or wave models}

The $L^2$ critical Nonlinear Schr\"odinger equation (NLS) has many similarities with the critical (gKdV) equation. For the case of the 1D NLS equation, the ground state is the same as for (gKdV).
For higher dimensions, the ground state is strictly positive, smooth and has exponential decay 
at infinity.
The criterion on global existence due to Weinstein \cite{Weinstein-1983}
is the same as for (gKdV): any $H^1$ initial data with subcritical mass leads to
a unique global and bounded solution.
For (NLS), a minimal mass blow-up solution is known explicitly with the blow-up rate $(T-t)^{-1}$, using the pseudoconformal symmetry.
This minimal mass blow-up solution was proved to be unique by Merle \cite{Merle-1993-minmass-nls} (up to invariances).
Solutions with the pseudoconformal blow-up rate $(T-t)^{-1}$ were constructed in the work of Bourgain, Wang \cite{Bourgain-Wang-1998} and subsequently in the work of Krieger, Schlag \cite{Krieger-Schlag-2009}, by perturbing the minimal mass blow-up solution with strongly degenerate profiles.
The work of Landman, Papanicolaou, Sulem, Sulem \cite{LPSS-1988-first-blow-up-nls} has conjectured the so called "$\log\log$ law" of blow-up. A first rigorous construction of such blow-up is due to Perelman in \cite{Perelman-2001-nls-d1} in 1D.
A complete description of this regime for space dimension $d\leq 5$, and in particular its stability in $H^1$, was obtained in a series of works of Merle, Rapha\"el; see for example \cite{Merle-Raphael-profile-quant-2005, Merle-Raphael-crit-nls-2005}.
Such blow-up occurs at a finite point, for some $H^1$ open set of slightly supercritical initial data, with speed $(T-t)^{-\frac12}\sqrt{\log{|\log{(T-t)}|}}$.  
The proof of this result relies on a coercivity property of a certain operator related to the ground state proved in \cite{Merle-Raphael-crit-nls-2005} for $d=1$. The lack of an explicit formula for $Q$ in higher dimensions complicates the proof of this property, but it has been verified numerically for $2\leq d\leq 4$ in \cite{Fibich-Merle-Raphael-spec-prop-2006} and for $5\leq d \leq 10$ in \cite{Yang-Roudenko-Zhao-blowup-dynamics-2018}.
The subject of exotic blow-up (other than "$\log\log$" and $\frac{1}{t}$) for (NLS) is still widely open.
See, however, \cite{Raphael-2005} for a negative result.

Recall that the dependence of the grow up rate on the tail of given initial data was previously observed for global in time growing up solutions of the energy critical harmonic heat flow in \cite{Gustafson-Nakanishi-Tsai-asympt-2008}. 

The blow-up for the energy critical problems has also attracted considerable attention. 
Krieger, Schlag and Tataru  \cite{Krieger-Schlag-Tataru-renorm-2008, Krieger-Schlag-Tataru-2009}
pioneered the construction of continuum of exotic blow-up in such a context,
both for the energy-critical wave map problem and the energy critical focusing wave equation in $d=3$. Particularly, they proved the existence of such solutions with concentration rate
$\lambda(t) \approx t^{-1-\nu}$ for any $\nu>\frac 12$. In \cite{Krieger-Schlag-2014} this condition was relaxed to $\nu>0$. Then, Donninger, Huang, Krieger and Schlag \cite{Donninger-Huang-Krieger-Schlag-2014} investigated the existence of other admissible rescaling functions for this type of blow-up. Furthermore, they constructed blow-up solutions with the scaling law $\lambda(t) = t^{-1-v(t)}$ where $v(t)$ is eternally oscillating and asymptotically constant. Such problems are also relevant for the mass critical (gKdV) equation.

Other results are known for the higher space dimensions.
Jendrej \cite{Jendrej-2017} constructed type-II blow-up solutions in the five space dimensions.
For the energy-critical one equivariant wave maps in the two sphere, Jendrej, Lawrie, Rodriguez in \cite{Jendrej-Lawrie-Rodriguez-2022} provide a sharp classification of the blow up rate for solutions constructed by prescribing the radiation. In the recent work \cite{Kim-Kihun-et-al-2024}, Kim, Kwon and Oh adapt the their method to construct finite-energy blow-up solutions for the radial self-dual Chern–Simons–Schrödinger equation with a continuum of blow-up rates.


\subsection{Notation}

Throughout the text, $X\lesssim Y$ is a shorthand for the inequality $X \leq C\,Y $ for some time-space independent constant $C$. In general manner, the constants are time-space independent, unless otherwise stated.

For a given small positive constant $0<\alpha^* \ll 1$, we denote $\delta$ a small parameter depending on $\alpha^*$ such that
\begin{equation}
    \delta(\alpha^*) \to 0 \quad \text{as}\quad \alpha^* \to 0.
\end{equation}

For $1\leq p\leq +\infty$, $L^p(\RR)$ denotes the classical real-valued Lebesgue space. For $f,g \in L^2(\RR)$, we denote their scalar product
\begin{equation}
    (f,g) = \int_{\RR} f(y)\,g(y)\, dy.
\end{equation}

We define the weighted Lebesgue spaces $L^2(\RR; e^{-\frac{|y|}{10}}dy)$ and $L^2(\RR; e^{\frac{y}{B}}dy)$, for $B>100$, equipped respectively with the norms
\begin{equation}
    \|f\|_{L^2_{sol}} = \Big( \int f^2 e^{-\frac{|y|}{10}}dy\Big)^{\frac{1}{2}} \qquad \text{and} \qquad \|f\|_{L^2_B}= \Big( \int f^2(y) e^{\frac{y}{B}} dy \Big)^{\frac{1}{2}}.
\end{equation}

In the following, for simplicity of notation, we omit $dy$ and the integration domain is $\RR$, unless otherwise stated.

In the space $L^2$, the generator of scaling symmetry is given by
\begin{equation}
    \Lambda f = \frac{1}{2}f + yf'
\end{equation}
and the linearized operator $\mathcal{L}$ around the ground state by 
\begin{equation}
    \mathcal{L}f = -f''+f-5Q^4 f.
\end{equation}

\subsection{Strategy of the proof}\label{S:strategy}
The proof proceeds in four main steps, detailed in Sections \ref{S:2} to \ref{section_construction}.
\begin{enumerate}
    \item \textit{Definition and role of the tail}\\
    Given $c_0 < 0$ and $\theta > 1$, we consider a smooth $L^2$ function $f_0$ with power decay of order $\theta$ on the right
    \begin{equation}
        f_0(x) = c_0 x^{-\theta} \qquad \text{for}\quad x_0 \gg1.
    \end{equation}
 Then, we define $f$ the solution of \eqref{gKdV_principal_eq} evolving from the initial data $f_0$. The evolution of such decaying tails is studied by local energy estimates for (gKdV). 
 We know that $f$ is global and that the tail of $f_0$ is preserved for $x \gtrsim t$. 
 The aim of this tail will be to modify in the next step the behaviour of the coupled system of modulation equations of the parameters of a blowing up soliton, by nonlinear interaction
 of the tail with the soliton.
 
    \item \textit{Modulation around a refined profile and system of parameters}\\
    We look for a solution $U$ of \eqref{gKdV_principal_eq} of the following form
    \[
    U(t,x) = v(t,x)+f(t,x),
    \]
   with
    \[
    v(t,x)=\lambda^{-\frac 12}(s)\big( Q_{b(s)}(y) + f(t(s),\sigma(s))R(y) + \vare(s,y)\big)
    \]
    and where $(s,y)$ are the rescaled time and space variables defined by
    \begin{equation}
        \frac{ds}{dt} = \frac{1}{\lambda^3},\quad y = \frac{x-\sigma(s)}{\lambda(s)}.
    \end{equation}
    Note that as $t\in [0,T)$, $s$ will be defined in a certain interval $[s_0,+\infty)$
    and $y\in \RR$.
    In the formula above for $v$, $Q_b$ is a refined soliton profile (the parameter $b(s)$ takes into account the variation of the scaling parameter $\lambda(s)$), the function $R(y)$ is a specific rapidly decaying function and $\vare$ is a remainder term to be estimated by the energy method in the next step. 
    Note that the term $f(t,\sigma) R(y)$ aims at compensating the first order of the nonlinear interaction between the soliton and the tail $f(t,x)$ added to the solution $U$. 
    This interaction is relatively weak, since the soliton is concentrated (this means that $\lambda(s)\ll 1$) and the position of the soliton $\sigma(s)$ is large, which means  that $f(t,\sigma)$ is small. 
    However, this interaction is sufficient to
    modify the blow-up behavior under certain conditions as we will formally justify now.
    
    The equation of $\vare$ and the parameters $(\lambda,\sigma,b)$ are given in \eqref{equation_of_eps_s}.
 From this equation and orthogonality conditions prescribed on $\vare$, we obtain 
 the dynamical system of the parameters $(\lambda,\sigma,b)$. 
 Formally (\emph{i.e.} neglecting error terms), we obtain
    \begin{equation}\label{formal_dynamical_system}
        \frac{\lambda_s}{\lambda}+b = 0, \qquad \sigma_s = \lambda, \qquad \frac{d}{ds}\Big( \frac{b}{\lambda^2} + \frac{4}{\int Q}\,c_0 \,\lambda^{-\frac{3}{2}}\sigma^{-\theta} \Big) = 0.
    \end{equation}
    The equations for $\lambda$ and $\sigma$ are natural in view of equation \eqref{equation_of_eps_s}. The equation of $b$ is more subtle and it will be justified later.
    Let us say for the moment that the term containing $\sigma^{-\theta}$ comes from the nonlinear interaction of the tail $f$ with the soliton. In practice, this term is computed thanks to the addition of $f(t,\sigma) R(y)$ in the formula for $v(t,x)$ above.
    The precise computations that justify this dynamical system can be found in Lemma \ref{lemma_with_all_estimates_on_r_F_etc} - Lemma \ref{lemma_with_estimates_on_hs_and_gs}.\\
    In what follows, we perform formal computations related to \eqref{formal_dynamical_system}.\\
    Integrating the last equation on some time interval $[s_0,s]$, we have 
    \begin{equation}\label{equation_with_l0}
        \frac{b(s)}{\lambda^2(s)} + \frac{4\,c_0}{\int Q}\,\lambda^{-\frac 32}(s)\,\sigma^{-\theta}(s) = l_0 .
    \end{equation}
    In the regime considered, we will check that $\frac{b(s)}{\lambda^2(s)}\to +\infty$, which
    means that the value of the integration constant $l_0$ does not matter.
    For now on, we fix $l_0 = 0$ to simplify.
    Replacing $b$ by $-\frac{\lambda_s}{\lambda}$ and then $\lambda$ by $\sigma_s$ (from \eqref{formal_dynamical_system}) yields
    \begin{equation}
        \lambda^{-\frac 12}\lambda_s  = \frac{4\,c_0}{\int Q}\, \lambda\,\sigma^{-\theta} = \frac{4\,c_0}{\int Q}\,\sigma_s\,\sigma^{-\theta}.
    \end{equation}
    We expect $\lambda(s)\to 0^+$ as $s\to +\infty$ and thus $\lambda_s\leq 0$, thus the first equality leads us to choose $c_0<0$.\\
    After integration, we get 
    \begin{equation}\label{l1}
        \lambda^{\frac 12}(s) + \frac{2\,c_0}{(\theta -1)\int Q}\,\sigma^{1-\theta}(s) = l_1.
    \end{equation}
   Since $\lambda(s)\to 0$ and $\sigma(s)\to +\infty$ as $s\to +\infty$, one has to impose
   $l_1=0$.
    Combining with the second equation in \eqref{formal_dynamical_system}, and choosing at
    $s=s_0$ the initial value $\sigma^{2\theta -1}(s_0) = (2\theta-1)\Big( \frac{2\,c_0}{(\theta -1 )\int Q}\Big)^2s
    _0$, we get 
    \begin{equation}
        \sigma_s = \Big( \frac{2\,c_0}{(\theta -1 )\int Q}\Big)^2\,\sigma^{2(1-\theta)} \qquad \iff \qquad \sigma^{2\theta -1}(s) = (2\theta-1)\Big( \frac{2\,c_0}{(\theta -1 )\int Q}\Big)^2 s.
    \end{equation}
    Inserted in the previous relations, this gives
    \begin{equation}
        \lambda(s) = (2\theta - 1)^{-\frac{2(\theta-1)}{2\theta-1}}\Big( \frac{2\,c_0}{(\theta-1)\int Q} \Big)^{\frac{2}{2\theta-1}}\,s^{-\frac{2(\theta-1)}{2\theta-1}}
    \end{equation}
    With the notation
    \begin{equation}
        \beta = \frac{2(\theta-1)}{2\theta-1}, \qquad \theta = \frac{1-\frac{\beta}{2}}{1-\beta}, \qquad c_0 = -\frac{\int Q}{2}(\theta-1)(2\theta-1)^{\theta-1} < 0,
    \end{equation}
    we have obtained
    \begin{equation}\label{formal_law_of_parameters}
        \lambda(s) = s^{-\beta}, \qquad \sigma(s)= \frac{1}{1-\beta}s^{1-\beta},\qquad b(s)= \frac{\beta}{s}.
    \end{equation}
   Reciprocally, one easily checks that \eqref{formal_law_of_parameters} are solutions of the dynamical system \eqref{formal_dynamical_system}. Moreover, substituting \eqref{formal_law_of_parameters} into \eqref{equation_with_l0} indeed justifies that the constant $l_0$ could be neglected. In contrast, if $l_1$ is chosen positive,
    one formally obtains from \eqref{l1} that the solution will converge to a fixed rescaled soliton ($\lambda\to l_1^2$).
    In this sense, the direction $l_1 = 0$ presents an instability of the exotic blow-up.
    This observation is formal and we will not pursue this issue here.
    However, it justifies the use of a topological argument to conclude the bootstrap argument in the proof of the Theorem \ref{Theorem_principal_result}.
    
    \item \textit{Energy estimates}\\
    The first two steps are similar to the work of Martel, Merle, Rapha\"el \cite{MMPIII}.
    In the third step, we introduce a modification of the virial-energy functional of the form
    \begin{equation}\label{def:h}
        \mathcal{H}(s) = s^{j}\mathcal{F}(s) + \lambda^k(s)\int \vphi_k\,\vare^2(s),
    \end{equation}
    which simplifies and extends the work in \cite{MMPI,MMPII,MMPIII}. 
    The term $\mathcal{F}$ represents a mixed virial-energy functional at the level of the equation of $\vare$. In the regime \eqref{formal_law_of_parameters}, $\mathcal{F}$ is coercive and its control allows to bound locally the remainder $\vare$.
    
    Compared to \cite{MMPIII}, the simplification comes from the fact that adding the term 
    \begin{equation*}
        \lambda^k(s)\int \vphi_k\,\vare^2(s) \qquad \vphi_k(y) = y^{k},\, y \gg 1
    \end{equation*}
    directly to the functional $\mathcal{H}$
    allows to control the delicate scaling terms that appear when differentiating $s^{j}\mathcal{F}$. In \cite{MMPIII}, the control of such terms is based on a bootstrap 
    estimate. Note that this term is at the scaling level, which means that coming back to the original variables $(t,x)$, no scaling term in $\lambda$ would appear.
    
    The functional in \eqref{def:h} also extends \cite{MMPIII} thanks to the introduction of the parameter $k$, which depends on the blow-up rate.
    Choosing a fixed $k$, as was done in \cite{MMPIII}, does not allow to reach all the values of $\nu \to \frac 12^{+}$. In the paper \cite{MMPIII}, the authors considered the interval $\frac 54< \theta < \frac{29}{18}$, which corresponds to $\frac 13<\beta< \frac{11}{20}$ and thus any blow-up rate $\nu > \frac{11}{13}$. In our work, we consider the interval $\theta>\frac 32$, therefore $\frac 12<\beta< 1$ and thus $\frac 12< \nu < 1$.
    In particular, $k \to +\infty$ as $\nu \to \frac 12^{+}$ (equivalently $\theta\to+\infty$).
    It might appear a bit counter-intuitive that a stronger modification of blow-up laws
    (as $\nu$ approaches $\frac 12$, it differs more from the stable rate $\nu=1$)
    is the consequence of a more decaying tail $\theta\to +\infty$ since the influence of the tail should be weaker.
    
    \item \textit{Construction of blowing up solutions}\\
    For well-chosen initial data arbitrarily close to the soliton in a given topology, we construct solutions 
    in the regime close to \eqref{formal_dynamical_system}, 
    using a bootstrap argument and energy estimates as described above.
    See Proposition \ref{Prop_on_blow_up} for a precise statement.\\
    Now, we justify formally that the behaviour of the parameters in \eqref{formal_law_of_parameters},  leads to the blow-up scenarios in the Theorem \ref{Theorem_principal_result} in the original time $t$.\\
    By the definition of the rescaled time and since $\beta > \frac 13$, we get that 
    \begin{equation}
        \int^{+\infty}_{s_0} \lambda^3(s)\,ds = T < +\infty.
    \end{equation}
    Therefore, the solution $U$ blows up in finite time.\\
    The law of the rescaled variable yields the following  behaviour of 
    the rescaled time $s$ in terms of $t$
    \begin{equation}
        s(t) \sim \big[(3\beta-1) \,(T-t)\big]^{-\frac{1}{3\beta-1}}  \qquad \text{for}\,\, t \to T^{-}.
    \end{equation}
    Therefore, 
    \begin{equation}
        \lambda(s(t)) \sim (T-t)^{\nu} \qquad \text{with }\nu = \frac{\beta}{3\beta-1}\in (\frac 12, 1).
    \end{equation}
    Combining with the consequence of the decomposition around the approximate blow-up profile, it holds for some $c>0$
    \begin{equation}
        \|\partial_x U(t)\|_{L^2_x} \sim \lambda^{-1}(s(t)) \sim c\,(T-t)^{-\nu} .
    \end{equation}
    Those computations are detailed in the proof of the Theorem \ref{Theorem_principal_result} in  Section \ref{section_construction}.
\end{enumerate}

\vspace{0.4cm}
\textbf{Acknowledgments: }\textit{The author would like to express her gratitude to Didier Pilod for his active interest in the publication of this paper and for several helpful conversations. 
Further, the author would like to thank Yvan Martel for suggesting this project and for helpful discussions and encouragements.}

\section{Definition of the tail on the right-hand side}\label{S:2}
In this section we introduce a function with an explicit power-like decay on the right 
and we recall from \cite{MMPIII} a property of persistence of such tails
for the critical gKdV equation in a suitable space-time region.

For $c_0<0$, $\theta >1$ and $x_0 \gg 1$, we consider a function
$f_0 \in C^{\infty}(\RR)\cap L^2(\RR)$ such that
\begin{equation}\label{estimaate_on_f0_derivative_on_x}
    f_0(x) := \begin{cases}
        0 \quad \text{ for } x < \frac{x_0}{4}\\
        c_0 x^{-\theta} \quad \text{ for } x > \frac{x_0}{2}
    \end{cases}
    \qquad \text{and} \qquad \Big\lvert \frac{d^k}{dx^k}f_0(x)\Big\rvert \lesssim  |c_0|\lvert x \rvert^{-\theta-k} \qquad \forall (x,k) \in \RR\times\NN.
\end{equation}
We remark that 
\begin{equation}\label{estimate_on_f0_norm_L2}
    \|f_0\|_{L^2(\RR)} \approx |c_0| x_0^{-\theta+\frac{1}{2}},\quad
    \|f_0\|_{H^1(\RR)} \approx |c_0| x_0^{-\theta+\frac{1}{2}}.
\end{equation}

Let $f$ be the solution of
\begin{equation}\label{equation_of_f}
    \begin{cases}
        \partial_t f + \partial_x(\partial_{xx}f+ f^5) = 0,\\
        f(0,x) = f_0(x).
    \end{cases}
\end{equation}

\begin{lemma}[\cite{MMPIII}]\label{lemma_on_the_decaying_tail}
    For $x_0$ large enough, the solution $f$ of \eqref{equation_of_f} is global, smooth and bounded in $H^1$.
    Moreover, $f\in C(\RR,H^s(\RR))$ for $s\geq 0$ and $\|f\|_{L^{\infty}_{t}H^s_x} \lesssim \delta(x_0^{-1})$.
    
    In addition, it holds for all $t\geq0$ and $x > \frac{t}{2}+\frac{x_0}{2}$,
    \begin{equation}
        \forall k \in \NN\cup\{0\}, \qquad \lvert \partial^k_x f(t,x) - f_0^{(k)}(x)\rvert \lesssim x^{-\theta-k-2}.
    \end{equation}
    \begin{equation}
        \lvert \partial_t f(t,x) \rvert \lesssim x^{-\theta-3}.
    \end{equation}
\end{lemma}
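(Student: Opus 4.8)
\emph{Global existence and uniform Sobolev bounds.} First I would note that, by \eqref{estimate_on_f0_norm_L2}, $\|f_0\|_{H^1}\approx|c_0|\,x_0^{1/2-\theta}\to0$ as $x_0\to\infty$, so for $x_0$ large $\|f_0\|_{L^2}<\|Q\|_{L^2}$; conservation of mass and energy together with the sharp Gagliardo--Nirenberg inequality \eqref{gagliardo-nirenberg} (the Weinstein argument recalled above) then give that $f$ is global in $H^1$ with $\|f(t)\|_{H^1}\lesssim\|f_0\|_{H^1}\lesssim\delta(x_0^{-1})$ for all $t$. Since \eqref{estimaate_on_f0_derivative_on_x} implies $f_0\in H^s$ for every $s\ge0$, persistence of regularity in the Kenig--Ponce--Vega Cauchy theory gives $f\in C(\RR,H^s)$; to upgrade this to a \emph{uniform in time} small bound I would combine it with the small-data global Strichartz/local-smoothing estimates for the Airy group and run, for each fixed $s$, a continuity argument: if $\|f\|_{L^\infty_tH^s}\le2C_s\delta$ on a maximal interval, the $H^s$ energy estimate together with the $H^1$-smallness of $f$ makes the nonlinear contribution $\lesssim\delta^{1/2}\|f\|_{L^\infty_tH^s}$, so $\|f\|_{L^\infty_tH^s}\le C_s\delta$ and the interval is $\RR$. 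This yields $\|f\|_{L^\infty_tH^s_x}\lesssim\delta(x_0^{-1})$ and, by Sobolev embedding, $\|\partial_x^\ell f(t)\|_{L^\infty_x}\lesssim\delta(x_0^{-1})$ for all $\ell$.

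\emph{Reduction to a source estimate.} Next I would set $g:=f-f_0$; since $f_0$ is time-independent and solves no equation,
\begin{equation*}
  \partial_t g+\partial_x^3 g=-f_0'''-\partial_x(f^5)=:N,\qquad g(0,\cdot)=0 .
\end{equation*}
On $x>x_0/2$ one has $f_0=c_0x^{-\theta}$, so $|\partial_x^\ell f_0'''(x)|\lesssim x^{-\theta-3-\ell}$; on $x_0/4<x<x_0/2$ the bounds \eqref{estimaate_on_f0_derivative_on_x} give $|\partial_x^\ell f_0'''|\lesssim|c_0|\langle x\rangle^{-\theta-3-\ell}$, while $f_0\equiv0$ on $x<x_0/4$. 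The quintic forcing $\partial_x(f^5)=5f^4\partial_x f$ is, in the far right, of \emph{higher order} than $f_0'''$ once a rough bound $|f|\lesssim\langle x\rangle^{-\theta}$ is available there (because $5\theta+1\ge\theta+3$ when $\theta\ge\frac12$). The goal thus reduces to proving $|\partial_x^kg(t,x)|\lesssim x^{-\theta-k-2}$ on $x>\frac t2+\frac{x_0}2$; granting this, the identity $\partial_t f=-\partial_x^3g-f_0'''-5f^4\partial_x f$ gives $|\partial_t f(t,x)|\lesssim x^{-\theta-5}+x^{-\theta-3}+x^{-5\theta-1}\lesssim x^{-\theta-3}$ on the same region.

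\emph{Weighted energy estimates in a moving frame.} The core step is a weighted, localized energy estimate. I would fix $A>x_0$ large and a Lipschitz curve $\xi_A(t)$ with $\xi_A(0)\approx A$, $\xi_A'(t)\approx\frac12$, so that $\{x>\xi_A(t)\}$ is the forward region issued from $x=A$ at $t=0$; then, for exponents $m_k$ and a large constant $B$, set $\omega_k(t,x)=\psi\!\big(\frac{x-\xi_A(t)}{B}\big)\langle x\rangle^{2m_k}$, with $\psi$ a smooth nondecreasing switch ($\equiv0$ far left, $\equiv1$ far right), and $\mathcal N_k(t):=\int(\partial_x^kg)^2\,\omega_k$. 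Multiplying $\partial_x^k$ of the $g$-equation by $2\,\partial_x^kg\,\omega_k$ produces, besides the favorable terms $-3\int(\partial_x^{k+1}g)^2\partial_x\omega_k$ and the $\partial_t$-frame/switch terms (favorably signed since $\xi_A'>0$, $\psi'\ge0$ and $\langle x\rangle^{2m_k}$ is increasing), the commutator $\int(\partial_x^kg)^2\partial_x^3\omega_k$ and the nonlinear Leibniz terms, absorbed by taking $B,A$ large and using the previous step, and the forcing $2\int\partial_x^kg\,\partial_x^kN\,\omega_k$, whose dominant part $\int\partial_x^kg\,(-f_0^{(k+3)})\omega_k$ is controlled by Cauchy--Schwarz via $\|f_0^{(k+3)}\omega_k^{1/2}\|_{L^2}\lesssim A^{m_k-\theta-k-5/2}$ (the transition-region part of $f_0'''$, sitting at $x\le x_0/2\le\xi_A(t)$, is killed by $\psi$), the quintic part being lower order. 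Since $g(0,\cdot)=0$ and $t\lesssim A$ on the region, time integration gives $\mathcal N_k(t)\lesssim A^{2m_k-2\theta-2k-3}$, hence $\int_x^{\infty}(\partial_x^kg)^2\lesssim x^{-2\theta-2k-3}$ for $x>\xi_A(t)$. Finally, $|\partial_x^kg(t,x)|^2\le2\big(\int_x^{\infty}(\partial_x^kg)^2\big)^{1/2}\big(\int_x^{\infty}(\partial_x^{k+1}g)^2\big)^{1/2}$ upgrades this to $|\partial_x^kg(t,x)|\lesssim x^{-\theta-k-2}$. Running this for all $k$ in one bootstrap (the quintic forcing at level $k$ only needs the smallness of $f$ and the lower-order pointwise control already assumed) closes the argument, and the $\partial_t f$ bound follows from the previous step.

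\emph{Main obstacle.} The only genuinely delicate point is this weighted moving-frame estimate: the weight must grow polynomially to the right fast enough to see the $x^{-\theta-k-2}$ decay yet stay harmless on the left, where $f$ is only small (not decaying); and the frame speed, the switch $\psi$, and the powers $m_k$ must be chosen so that the transport, $\psi'$- and $\psi'''$-commutator, and $\langle x\rangle$-derivative terms are either favorably signed or strictly lower order, all while tracking $-f_0'''$ sharply enough to land exactly one power of $x$ below it. Carrying this bookkeeping uniformly over $k$ and across the three regimes $x<x_0/4$, $x_0/4<x<x_0/2$, $x>x_0/2$ is the substance of the proof; everything else is soft.
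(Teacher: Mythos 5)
Note that the paper's own ``proof'' of this lemma consists of two citations: well-posedness, persistence of regularity and smallness are quoted from \cite[Theorem~2.8, Corollary~2.9]{KPV93} or \cite[Theorem~7.2]{Linares-Ponce}, and the tail asymptotics are quoted from \cite[Lemma~2.3]{MMPIII}. You are reproducing the argument from scratch. The first part of your sketch (conservation laws, Gagliardo--Nirenberg, persistence of regularity plus a continuity argument for the uniform $H^s$ bound) is fine, and the overall strategy for the tail (reduction to the equation for $g=f-f_0$ with source $-f_0'''-\partial_x(f^5)$, a weighted moving-frame $L^2$ energy estimate localized to the right of $\xi_A(t)$, and a Gagliardo--Nirenberg interpolation upgrading $L^2$ decay into pointwise decay) is indeed the mechanism behind \cite[Lemma~2.3]{MMPIII}.

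The time-integration step has a genuine gap, though. You assert that ``$t\lesssim A$ on the region'' and from there deduce $\mathcal{N}_k(t)\lesssim A^{2m_k-2\theta-2k-3}$ and then ``hence $\int_x^\infty(\partial_x^kg)^2\lesssim x^{-2\theta-2k-3}$''. Neither step is justified. With $\xi_A(0)\approx A$ and $\xi_A'\approx\tfrac12$, a point $(t,x)$ near the boundary $x\approx\tfrac t2+\tfrac{x_0}2$ of the admissible region forces $A\approx x-\tfrac t2\approx\tfrac{x_0}2$ to stay of order $x_0$ while $t$ (and $x$) go to infinity, so $t\lesssim A$ fails; and for such $(t,x)$ one has $A\ll x$, so a bound in powers of $A$ cannot be converted into a bound in powers of $x$. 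The fix is to use the sharper bound on the source --- the product $(f_0^{(k+3)})^2\omega_k$ is largest near $x\approx\xi_A(t)$, not near $A$, so $\|f_0^{(k+3)}\,\omega_k^{1/2}\|_{L^2}\lesssim \xi_A(t)^{\,m_k-\theta-k-5/2}$ --- and to take $m_k\in(\theta+k+\tfrac32,\;\theta+k+\tfrac52)$. Then
\begin{equation*}
\int_0^t\xi_A(s)^{\,m_k-\theta-k-5/2}\,ds\;\lesssim\;\xi_A(t)^{\,m_k-\theta-k-3/2}\,,
\end{equation*}
even though this time integral is \emph{not} convergent, so $\mathcal{N}_k(t)\lesssim\xi_A(t)^{\,2m_k-2\theta-2k-3}$; choosing $A=x-\tfrac t2$ (so that $\xi_A(t)=x$) gives $\int_{y>x}(\partial_y^kg)^2\le x^{-2m_k}\mathcal{N}_k(t)\lesssim x^{-2\theta-2k-3}$ uniformly in $t$, with no need for ``$t\lesssim A$''. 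The restriction $m_k<\theta+k+\tfrac32$ that would make your time integral converge is incompatible with reading off the sharp $x$-power, and the claim ``$t\lesssim A$'' that you introduce to compensate is simply false on the allowed region.
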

\begin{proof}
    The first part of the Lemma follows from the general result in \cite[Theorem 2.8]{KPV93} and \cite[Corollary 2.9]{KPV93} or in \cite[Theorem 7.2]{Linares-Ponce}.  The asymptotic of the decaying tail is proved in \cite[Lemma 2.3]{MMPIII}.
\end{proof}
\begin{remark}
As in \cite{MMPIII}, we will use $f(t,x)$ instead of $f_0(x)$ to insert in 
the approximate solution. Indeed, $f$ being a solution of the nonlinear equation \eqref{gKdV_principal_eq}, it will provide better estimates than
simply taking $f_0$, at the cost of some error terms controlled in Lemma \ref{lemma_on_the_decaying_tail} above.
\end{remark}

\section{Decomposition around the soliton}\label{section_decomposition} 
\subsection{Structure of the linearized operator}
Here we recall some properties of the linearized operator $\mathcal{L}$ around the soliton. The operator is defined as follows
\begin{equation}
    \mathcal{L}\phi = -\phi'' +\phi - 5Q^4 \phi.
\end{equation}
We introduce the following function space
\begin{equation}
    \mathcal{Y} := \{\phi \in C^{\infty}(\RR,\RR) \,|\, \forall k \in \NN, \exists C_k, r_k>0 \;\text{ s.t. }\; \lvert \phi^{(k)}(y)\rvert\leq C_k(1+|y|)^{r_k}e^{-|y|}, \forall y \in \RR \}.
\end{equation}
We recall the following standard result, (see for instance \cite{Weinstein-85} and \cite{Martel-Merle-01}).
\begin{lemma}[Properties of the linearised operator $\mathcal{L}$]
The operator $\mathcal{L}: H^2(\RR) \subset L^2(\RR)\rightarrow L^2(\RR)$ satisfies
\begin{enumerate}
    \item (Spectrum) The operator $\mathcal{L}$ has only one negative eigenvalue, $\mathcal{L}Q^3 = -8 Q^3$. Moreover, $\ker{\mathcal{L}} = \{aQ' : a \in \RR \}$ and $\sigma_{ess}(\mathcal{L})=[1,+\infty)$.
    \item (Scaling) $\mathcal{L}\Lambda Q = -2Q$ and $(Q,\Lambda Q)=0 $.
    \item (Coercivity) For any $\phi \in H^1$ holds
    \begin{equation}
        (\phi, Q^3) = (\phi, Q') = 0  \quad\Rightarrow\quad (\mathcal{L}\phi,\phi)\geq \|\phi\|^2_{L^2}.
    \end{equation}
    Moreover, there exists $\nu_0>0$ such that for any $\phi\in H^1$
    \begin{equation}
        (\mathcal{L}\phi,\phi)\geq \nu_0\|\phi\|^2_{H^1}- \frac{1}{\nu_0}[(\phi,Q)^2 + (\phi, y\Lambda Q)^2 + (\phi, \Lambda Q)^2] .
    \end{equation}
    \item (Invertibility) There exists a unique function $R \in \mathcal{Y}$, even and such that 
    \begin{equation}\label{properties_of_R}
        \mathcal{L}R = 5Q^4, \qquad\qquad (Q,R) = -\frac{3}{4}\int Q.
    \end{equation}
    \item (Invertibility (bis))
    There exists a unique function $P \in C^{\infty}(\RR)\cap L^{\infty}(\RR)$ such that $P' \in \mathcal{Y}$ and 
    \begin{equation}
        (\mathcal{L}P)' = \Lambda Q, \qquad  \lim_{y \to -\infty}P(y) = \frac{1}{2}\int Q, \qquad \lim_{y\to+\infty}P(y)=0.
    \end{equation}
    Moreover,
    \begin{equation}\label{relation_on_scalar_prod_P_and_Q}
        (P,Q) = \frac{1}{16}\Big(\int Q \Big)^2>0,  \qquad (P,Q') = 0.
    \end{equation}
\end{enumerate}
\end{lemma}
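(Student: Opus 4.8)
The plan is to dispatch the five assertions in turn, using throughout the explicit form of $Q$ together with standard Sturm--Liouville and Fredholm-type arguments; everything here is classical (it is essentially the content of \cite{Weinstein-85, Martel-Merle-01}), so I would only sketch it. For (1) I would first record, from \eqref{equation_of_Q} and one integration, the pointwise identities $Q''=Q-Q^5$ and $(Q')^2=Q^2-\tfrac13 Q^6$. Differentiating \eqref{equation_of_Q} gives $\mathcal{L}Q'=0$, and since $Q$ decays exponentially the potential $-5Q^4$ is relatively compact, so Weyl's theorem yields $\sigma_{ess}(\mathcal{L})=[1,+\infty)$. A direct computation with the two identities above gives $(Q^3)''=9Q^3-5Q^7$, hence $\mathcal{L}Q^3=-8Q^3$; as $Q^3>0$ this is the (simple) ground state, so $-8$ is the only negative eigenvalue, and $Q'$, which changes sign exactly once, is the next eigenfunction, forcing $\ker\mathcal{L}=\RR Q'$ and leaving no eigenvalue in $(0,1)$. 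For (2) I would expand $\mathcal{L}\Lambda Q$ with $\Lambda Q=\tfrac12 Q+yQ'$: using $Q''=Q-Q^5$ the $y$-independent part collapses to $-2Q$ and the $y$-part is $y\,\mathcal{L}Q'=0$; the orthogonality is $(Q,\Lambda Q)=\tfrac12\|Q\|_{L^2}^2+\tfrac12\int y(Q^2)'=0$ by parts.

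For (3), the first implication is immediate from (1): orthogonality to the ground state $Q^3$ and to $\ker\mathcal{L}=\RR Q'$ confines $\phi$ to the spectral subspace where $\mathcal{L}\ge 1$, so $(\mathcal{L}\phi,\phi)\ge\|\phi\|_{L^2}^2$ by the spectral theorem. For the quantitative $H^1$ statement I would argue by contradiction: otherwise there are $\phi_n$ with $\|\phi_n\|_{H^1}=1$, $(\mathcal{L}\phi_n,\phi_n)\to 0$, and $(\phi_n,Q),(\phi_n,y\Lambda Q),(\phi_n,\Lambda Q)\to 0$; extracting a weak $H^1$ limit $\phi$, using $\int Q^4\phi_n^2\to\int Q^4\phi^2$ (local compactness plus exponential decay of $Q$) and weak lower semicontinuity of $\|\cdot\|_{H^1}$, one gets $(\mathcal{L}\phi,\phi)\le 0$ with $\phi$ orthogonal to the three directions; a short nondegeneracy argument against $\{Q^3,Q'\}$ then forces $\phi=0$, whence $\int Q^4\phi_n^2\to 0$ and $(\mathcal{L}\phi_n,\phi_n)=\|\phi_n\|_{H^1}^2-o(1)\to 1$, a contradiction. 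The $H^1$ coercivity (rather than merely $L^2$) is then free, since $\|\phi\|_{H^1}^2\lesssim(\mathcal{L}\phi,\phi)+\|\phi\|_{L^2}^2$ trivially because $5Q^4\in L^\infty$.

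For (4): since $\ker\mathcal{L}=\RR Q'$ and $(5Q^4,Q')=\int(Q^5)'=0$, the equation $\mathcal{L}R=5Q^4$ is solvable, and as $5Q^4$ is even while $Q'$ is odd there is exactly one even solution $R$; the asymptotics $\mathcal{L}\sim-\partial_{yy}+1$ at infinity with exponentially decaying source put $R\in\mathcal{Y}$. The identity follows from (2): $(Q,R)=-\tfrac12(\mathcal{L}\Lambda Q,R)=-\tfrac12(\Lambda Q,\mathcal{L}R)=-\tfrac52(\Lambda Q,Q^4)=-\tfrac52\big(\tfrac12-\tfrac15\big)\int Q^5=-\tfrac34\int Q^5=-\tfrac34\int Q$, using $\int yQ^4Q'=-\tfrac15\int Q^5$ and $\int Q^5=\int Q$ (integrate \eqref{equation_of_Q}). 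For (5) I would fix the antiderivative of $\Lambda Q$ by computing $\int_{-\infty}^y\Lambda Q=yQ(y)-\tfrac12\int_{-\infty}^y Q$ and choosing the integration constant $\tfrac12\int Q$, so that $\mathcal{L}P=G$ with $G(y)=yQ(y)+\tfrac12\int_y^{+\infty}Q$, satisfying $G\to\tfrac12\int Q$ at $-\infty$ and $G\to 0$ at $+\infty$; writing $P=\tfrac12\int Q\cdot\chi+P_1$ with $\chi$ a cutoff equal to $1$ near $-\infty$ and $0$ near $+\infty$ reduces the problem to $\mathcal{L}P_1=G-\tfrac12\int Q\,\mathcal{L}\chi$, whose right-hand side decays exponentially and is orthogonal to $Q'$ (a parity and integration-by-parts check), hence solvable with $P_1'\in\mathcal{Y}$; this yields $P$ with the stated limits and regularity, unique once the kernel direction $Q'$ is normalized (equivalently, once one imposes $(P,Q')=0$, which accounts for the last identity). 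Finally $(P,Q)=-\tfrac12(\mathcal{L}P,\Lambda Q)=-\tfrac12(G,\Lambda Q)=\tfrac1{16}\big(\int Q\big)^2>0$, the middle scalar product being a direct computation using $F(y)=\int_y^{+\infty}Q$, $F'=-Q$, and several integrations by parts.

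The genuinely substantive point is the coercivity in (3): the fact that, after removing the ground-state and kernel directions, $\mathcal{L}$ is bounded below by $1$ (no eigenvalue in $(0,1)$), and the upgrade to a quantitative $H^1$ bound modulo the three explicit, non-orthogonal directions $Q$, $y\Lambda Q$, $\Lambda Q$. Everything else is bookkeeping: explicit identities for $Q$, Fredholm solvability modulo $\RR Q'$, and elementary asymptotics for $-\partial_{yy}+1$-type ODEs. The only other place requiring care is the construction of $P$ in (5), since its source term does not decay at $-\infty$ and one must peel off a constant with a cutoff before invoking the standard solvability theory.
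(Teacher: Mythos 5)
The paper does not prove this lemma: it records it as a standard fact and points to \cite{Weinstein-85} and \cite{Martel-Merle-01}. Your sketch reconstructs exactly the arguments one finds in those references, and the explicit identities that actually get used later in the paper all check out on recomputation: $(Q^3)''=9Q^3-5Q^7$ gives $\mathcal{L}Q^3=-8Q^3$; $\mathcal{L}\Lambda Q=-2Q$; $(\Lambda Q,Q^4)=\tfrac{3}{10}\int Q^5$ together with $\int Q^5=\int Q$ gives $(Q,R)=-\tfrac34\int Q$; and with $G(y)=yQ(y)+\tfrac12\int_y^{+\infty}Q$ one finds $(G,\Lambda Q)=\tfrac18(\int Q)^2-\tfrac14(\int Q)^2=-\tfrac18(\int Q)^2$, hence $(P,Q)=-\tfrac12(G,\Lambda Q)=\tfrac1{16}(\int Q)^2$.

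Two small cautions. In part (1), Sturm oscillation identifies $Q^3$ (no zeros) and $Q'$ (one zero) as the first two eigenfunctions, but it does \emph{not} by itself rule out a third eigenvalue in $(0,1)$ with a two-node eigenfunction; that absence is precisely the nontrivial spectral input from Weinstein's analysis, and it is logically equivalent to the first coercivity estimate $(\mathcal{L}\phi,\phi)\geq\|\phi\|_{L^2}^2$ under $\phi\perp Q^3,Q'$, so you cannot then invoke "the spectral theorem" without circularity. You already flag coercivity as the one substantive point, which is the right instinct — just be explicit that the gap between the second eigenvalue and the bottom of the essential spectrum is imported from the references rather than derived from oscillation theory. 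Second, you correctly notice that the conditions listed in part (5) determine $P$ only modulo $aQ'$ (any $P+aQ'$ satisfies $(\mathcal{L}P)'=\Lambda Q$ and the same limits), so the uniqueness claim really presupposes the normalization $(P,Q')=0$; the lemma puts this under ``Moreover'', which is a slight imprecision inherited from the statement, not a defect of your proof. It is worth adding that $(P,Q)$ is insensitive to the $aQ'$ ambiguity since $(Q',Q)=\tfrac12\int(Q^2)'=0$, so the value $\tfrac1{16}(\int Q)^2$ is unambiguous regardless of how one normalizes.
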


\begin{remark}
    By $\lim_{+\infty}P=0$ and since $P' \in \mathcal{Y}$, it holds
    \begin{equation}\label{property_on_right_on_P}
       | P(y)| \lesssim e^{-\frac{|y|}{2}}, \quad y>0.
    \end{equation}
\end{remark}

\subsection{Definitions and estimates of localized profiles}
We recall the definition of a one-parameter family of approximate self-similar profiles $b \mapsto Q_b$ with $|b|\ll 1$ which will provide the higher-order deformation of the ground state profile $Q_{b=0}$ in the blow-up setting. 

Introduce a function $\chi \in C^{\infty}(\RR)$ such that
\begin{equation}
    0\leq \chi\leq 1, \quad 0\leq (\chi'')^2 \lesssim \chi' \quad \text{on }\RR, \qquad \chi_{|(-\infty,-2)} \equiv 0 \quad \text{and} \quad \chi_{|(-1,+\infty)} \equiv 1.
\end{equation}

\begin{definition}[Localised profile]
    Let $\gamma = \frac{3}{4}$, the localised profile $Q_b$ is defined by
    \begin{equation}
        Q_b(y) = Q(y) + bP_b(y)
    \end{equation}
    with 
    \begin{equation}
        P_b(y) = P(y) \chi_b(y) \quad \text{and} \quad \chi_b(y) = \chi(|b|^{\gamma}y).
    \end{equation}
\end{definition}

We state the properties of $Q_b$ in the following lemma.
\begin{lemma}[Approximate self-similar profiles $Q_b$]
    There exists $b^*>0$ small enough such that for all $|b|<b^*$, the following properties hold:
    \begin{enumerate}
        \item (Estimates on $Q_b$) For all $y\in \RR$,
        \begin{equation}\label{estimate_on_Q_b_and_its_derivs}
        \begin{split}
            \lvert Q_b(y)\rvert \lesssim e^{-|y|}+|b|\Big( \mathds{1}_{[-2,0]}(|b|^{\gamma}y)+e^{-\frac{|y|}{2}}\Big),\\
            \lvert Q^{(k)}_b(y)\rvert \lesssim e^{-|y|}+|b|e^{-\frac{|y|}{2}} + |b|^{1+k\gamma}\mathds{1}_{[-2,-1]}(|b|^{\gamma}y), \quad \forall k \geq 1.
        \end{split}
        \end{equation}

        \item (Equation of $Q_b$) Let the error term be defined by
        \begin{equation}
            -\Psi_b = \big( Q''_b-Q_b+Q^5_b \big)' + b\Lambda Q_b - 2 b^2 \frac{\partial Q_b}{\partial b}.
        \end{equation}
        Then, for all $y\in \RR$,
        \begin{equation}
            \lvert \Psi_b(y) \rvert \lesssim |b|^{1+\gamma} \mathds{1}_{[-2,-1]}(|b|^{\gamma}y)+b^2 \Big( e^{-\frac{|y|}{2}} + \mathds{1}_{[-2,0]}(|b|^{\gamma} y) \Big),
        \end{equation}
        \begin{equation}
            \lvert \Psi^{(k)}_b(y) \rvert \lesssim |b|^{1+(k+1)\gamma}\mathds{1}_{[-2,-1]}(|b|^{\gamma}y) + b^2 e^{-\frac{|y|}{2}}, \qquad \forall k \geq 1.
        \end{equation}

        Moreover,
        \begin{equation}
            \lvert (\Psi_b,\phi)\rvert \lesssim b^2, \qquad \forall \phi \in \mathcal{Y} \qquad\text{and}\qquad \| \Psi^{(k)}_b (y) \|_{L^2_B} \lesssim C_B b^2, \qquad \forall k \geq 0.
        \end{equation}

        \item (Projection of $\Psi_b$ in the direction $Q$)
        \begin{equation}
            \lvert (\Psi_b, Q) \rvert \lesssim |b|^3.
        \end{equation}

        \item (Mass and energy properties of $Q_b$)
        \begin{equation}\label{estimate_mass_of_Qb}
            \bigg\lvert \int Q^2_b - \Big( \int Q^2 + 2b \int P Q \Big) \bigg\rvert \lesssim |b|^{2-\gamma},
        \end{equation}
        \begin{equation}\label{estimate_energy_of_Qb}
            \bigg\lvert E(Q_b) + b \int P Q \bigg\rvert \lesssim b^2.
        \end{equation}
    \end{enumerate}
\end{lemma}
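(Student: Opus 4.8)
The plan is to establish the five items in order. Items (1), (2) and (4) are obtained by direct computation from the definition $Q_b = Q + bP_b$, $P_b = P\chi_b$, the only care being to isolate the contribution of the transition zone $\{|b|^\gamma y \in [-2,-1]\}$ — where $Q$ and the derivatives $P^{(j)}$, $j \geq 1$, are exponentially small, of size $e^{-|b|^{-\gamma}}$ — from the bulk; item (3) is the delicate one. For (1) I would use $|Q(y)| \lesssim e^{-|y|}$, the boundedness of $P$ together with $|P(y)| \lesssim e^{-|y|/2}$ for $y > 0$ from \eqref{property_on_right_on_P}, and the fact that $\chi_b^{(m)}(y) = |b|^{m\gamma}\chi^{(m)}(|b|^\gamma y)$ is supported in $\{|b|^\gamma y \in [-2,-1]\}$ for $m \geq 1$ while $\chi_b \equiv 1$ on $\{|b|^\gamma y \geq -1\}$: splitting $y > 0$ (where $|bP_b| \lesssim |b|e^{-|y|/2}$) from $-2|b|^{-\gamma} \leq y \leq 0$ (where $|bP_b| \lesssim |b| = |b|\,\mathds{1}_{[-2,0]}(|b|^\gamma y)$) gives the first line of \eqref{estimate_on_Q_b_and_its_derivs}, and expanding $P_b^{(k)} = \sum_{j}\binom kj P^{(j)}\chi_b^{(k-j)}$, with $P^{(j)} \in \mathcal{Y}$ for $j \geq 1$ (hence $\lesssim e^{-|y|/2}$), gives the second line, the worst term $j = 0$ producing $|b|^{1+k\gamma}\mathds{1}_{[-2,-1]}(|b|^\gamma y)$.

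For (2), since $Q'' - Q + Q^5 = 0$ and $P'' - P + 5Q^4 P = -\mathcal{L}P$ with $(\mathcal{L}P)' = \Lambda Q$, I would expand $-\Psi_b$ directly to get
\begin{align*}
-\Psi_b &= b(1-\chi_b)\Lambda Q - b\,\chi_b'\,\mathcal{L}P + b\big(2P'\chi_b' + P\chi_b''\big)' \\
&\quad + b^2\Lambda P_b - 2b^2 P_b - 2b^3\partial_b P_b + \big[(Q + bP_b)^5 - Q^5 - 5bQ^4 P_b\big]',
\end{align*}
and then bound each term: the first is supported in $\{|b|^\gamma y \leq -1\}$, where $|\Lambda Q| \lesssim e^{-|y|} \lesssim |b|\,e^{-|y|/2}$, so it is $\lesssim b^2 e^{-|y|/2}$; the second and third carry a factor $\chi_b^{(m)}$, $m \geq 1$, so (with $\mathcal{L}P$ and $P^{(j)}$ bounded) they are $\lesssim |b|^{1+\gamma}\mathds{1}_{[-2,-1]}(|b|^\gamma y)$; $b^2\Lambda P_b - 2b^2 P_b$ is $\lesssim b^2(e^{-|y|/2} + \mathds{1}_{[-2,0]}(|b|^\gamma y))$ by (1) and $P^{(j)} \in \mathcal{Y}$; $2b^3\partial_b P_b$, with $|\partial_b P_b| \lesssim |b|^{-1}\mathds{1}_{[-2,-1]}(|b|^\gamma y)$, is $\lesssim b^2\mathds{1}_{[-2,0]}(|b|^\gamma y)$; and the last bracket has leading order $b^2(Q^3 P_b^2)'$, hence $\lesssim b^2 e^{-|y|}$. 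This gives the pointwise bound on $\Psi_b$; for $\Psi_b^{(k)}$ I would differentiate (each derivative on a $\chi_b^{(m)}$ adds a factor $|b|^\gamma$, each on an exponentially decaying factor is harmless), noting that the $\mathds{1}_{[-2,0]}$ contributions disappear for $k \geq 1$ because $P_b = P$ on $\{|b|^\gamma y \geq -1\}$ and $P^{(j)} \in \mathcal{Y}$ for $j \geq 1$. The integral bounds follow by pairing the pointwise estimate with $|\phi(y)| \lesssim e^{-|y|}$ for $\phi \in \mathcal{Y}$ (the $\mathds{1}_{[-2,-1]}$ part contributing $\lesssim |b|^{1+\gamma}|b|^{-\gamma}e^{-|b|^{-\gamma}} \ll b^2$), and with the weight $e^{y/B}$, $B > 100$, to get $\int |\Psi_b^{(k)}|^2 e^{y/B} \lesssim C_B b^4$.

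Item (3) is the main obstacle. I would write $Q_b = \widetilde{Q}_b - bP(1 - \chi_b)$ with $\widetilde{Q}_b = Q + bP$; every term of $-\Psi_b$ coming from $1 - \chi_b$ or from a derivative of $\chi_b$ is supported where $|Q| \lesssim e^{-|b|^{-\gamma}}$, so contributes $\lesssim e^{-c|b|^{-\gamma}} \ll |b|^3$ to $(\Psi_b, Q)$, and it remains to control $(\widetilde{\Psi}_b, Q)$, where, using $(\mathcal{L}P)' = \Lambda Q$,
\[
-\widetilde{\Psi}_b = b^2\Lambda P - 2b^2 P + b^2\,G_2 + O(b^3), \qquad G_2 = \big(10\,Q^3 P^2\big)'
\]
(the $O(b)$ term of $(\widetilde{Q}_b'' - \widetilde{Q}_b + \widetilde{Q}_b^5)'$ being $-b\Lambda Q$, which cancels $b\Lambda\widetilde{Q}_b$ at order $b$). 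The crux is that $(\Lambda P - 2P + G_2, Q) = 0$: using the skew-adjointness $(\Lambda f, g) = -(f, \Lambda g)$ and integration by parts this reduces to $(P, \Lambda Q) + 2(P, Q) + 10\,(Q^3 P^2, Q') = 0$, which follows (after integration by parts) from the equation $-Q'' + Q - Q^5 = 0$, the relation $(\mathcal{L}P)' = \Lambda Q$ with the prescribed limits of $P$, and the normalizations $(P, Q) = \frac1{16}(\int Q)^2$, $(P, Q') = 0$. This yields $(\widetilde{\Psi}_b, Q) = O(b^3)$, hence $|(\Psi_b, Q)| \lesssim |b|^3$. I expect this $O(b^2)$ cancellation to be the genuinely subtle point — it is precisely why the profile $P$ is constructed as a solution of $(\mathcal{L}P)' = \Lambda Q$ with those boundary values.

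Finally, for (4) I would expand in $b$. For the mass, $\int Q_b^2 = \int Q^2 + 2b\int QP_b + b^2\int P_b^2$, with $\int QP_b = \int PQ + O(e^{-|b|^{-\gamma}})$ and $\int P_b^2 = \int P^2\chi_b^2 \lesssim |b|^{-\gamma}$ (the bound coming from $-2|b|^{-\gamma} \leq y \leq 0$, where $P$ is only bounded), which gives \eqref{estimate_mass_of_Qb}. For the energy,
\[
E(Q_b) = E(Q) + b\Big(\int Q'P_b' - \int Q^5 P_b\Big) + b^2\Big(\tfrac12\int (P_b')^2 - \tfrac52\int Q^4 P_b^2\Big) + O(b^3),
\]
where $E(Q) = 0$, the $b^2$-bracket is $\lesssim 1$ by the estimates of (1), and, after integration by parts and using $Q'' + Q^5 = Q$ from \eqref{equation_of_Q}, $\int Q'P_b' - \int Q^5 P_b = -\int (Q'' + Q^5) P_b = -\int Q P_b = -\int PQ + O(e^{-|b|^{-\gamma}})$; this gives \eqref{estimate_energy_of_Qb}. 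The bookkeeping for (1), (2) and (4) is routine once the transition zone is isolated; all the weight of the proof is in the cancellation behind (3).
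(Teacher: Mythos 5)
The paper does not prove this lemma; the statement is deferred entirely to \cite[Lemma 2.4]{MMPI} and \cite[Lemma 3.3]{Martel-Pilod}, so there is no in-paper argument to compare against. Your reconstruction is correct. Items (1), (2), (4) are bookkeeping once the transition zone $\{|b|^\gamma y \in [-2,-1]\}$ is isolated (there $Q$ and $P^{(j)}$, $j\geq1$, are $O(e^{-c|b|^{-\gamma}})$ while each $\chi_b$-derivative costs a factor $|b|^\gamma$), and your exact decomposition of $-\Psi_b$ via $(\mathcal{L}P)'=\Lambda Q$ is right. The delicate step is (3), and you have it: the cut-off contributions pair with $Q$ to give $O(e^{-c|b|^{-\gamma}})$, and the $b^2$-coefficient of $(-\widetilde{\Psi}_b,Q)$, namely $(\Lambda P - 2P + 10(Q^3P^2)',\,Q)$, vanishes.

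One small inaccuracy worth flagging: you attribute that vanishing in part to $-Q''+Q-Q^5=0$ and to $(P,Q')=0$, but neither is actually used. What does the work is the nonzero left limit of $P$. Writing $(P,\Lambda Q) = (P,(\mathcal{L}P)') = [P\,\mathcal{L}P]_{-\infty}^{+\infty} - (P',\mathcal{L}P)$, the limits $\lim_{y\to-\infty}P = \lim_{y\to-\infty}\mathcal{L}P = \tfrac12\int Q$ give $[P\,\mathcal{L}P]_{-\infty}^{+\infty} = -\tfrac14(\int Q)^2$ and $\int PP' = \tfrac12[P^2]_{-\infty}^{+\infty} = -\tfrac18(\int Q)^2$, whence $(P,\Lambda Q) = -\tfrac18(\int Q)^2 + 5\int Q^4 PP'$; adding $2(P,Q) = \tfrac18(\int Q)^2$ and $10(Q^3P^2,Q') = -5\int Q^4 PP'$ (integration by parts on $(Q^4)'$) makes all terms cancel. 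This is a misattribution of which facts drive the cancellation, not a gap; the argument as a whole is sound.
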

\begin{proof}
The proofs of the results above on the approximate profiles
can be found in \cite[Lemma 2.4]{MMPI} and in \cite[Lemma 3.3]{Martel-Pilod}.
\end{proof}

\subsection{Definition of the approximate solution}
Let $\theta> 1$, $f$ be the solution of \eqref{equation_of_f},  $U$ be the solution of \eqref{gKdV_principal_eq} and set
\begin{equation}\label{Umoinsf}
    v(t,x) = U(t,x)-f(t,x).
\end{equation}

\vspace{0.3cm}
Given $C^1(\RR)$ functions of time $\lambda>0$, $\sigma\in\RR$,
we introduce the rescaled time variable for some $s_0 >1$
\begin{equation}\label{def_of_rescaled_time}
    s= s(t) = s_0 + \int^t_{0}\frac{d s'}{\lambda^3(s')} \quad \text{for} \quad t\in [0,T).
\end{equation}
Consider the inverse function $\tau(s(t)) = t$. We have
\begin{equation}\label{relation_on_diff_of_tau_and_zeta}
    \frac{d s}{dt}(t) = \frac{1}{\lambda^3(t)} \qquad \text{and}\qquad \frac{d \tau}{ds}(s) = \lambda^3(\tau(s)).
\end{equation}
From now on, any time-dependent function can be seen as a function of $s\in [s_0,+\infty)$ or of $t \in [0,T)$, depending on the context. 
The function $f$ was already defined in $t$, its variant in $s$ is then $f(\tau(s),x)$. The parameters $\lambda,\sigma$ will usually be defined in the variable $s$.

We consider the problem \eqref{gKdV_principal_eq}-\eqref{Umoinsf} in rescaled variables, setting
\begin{equation}
    V(s,y):= \lambda^{\frac{1}{2}}(s)v(\tau(s),x), \qquad F(s,y):= \lambda^{\frac{1}{2}}(s)f(\tau(s),x), \qquad F_0(s, y):= \lambda^{\frac{1}{2}}(s)f_0(x),
\end{equation}
with 
\begin{equation}
    y = \frac{x-\sigma(s)}{\lambda(s)}.
\end{equation}
and the functions $\lambda>0$,  $\sigma\in\RR$ defined above.
We define the following quantity
\begin{equation}\label{def_of_error_term_mathcal_E}
        \mathcal{E}(V) = V_s+\partial_y \big( \partial^2_y V -V + (V+F)^5 - F^5 \big) -\frac{\lambda_s}{\lambda}\Lambda V - \Big( \frac{\sigma_s}{\lambda}-1 \Big)\partial_yV.
    \end{equation}
Then,   $U$ is a solution of \eqref{gKdV_principal_eq} if and only if $\mathcal{E}(V) =0 $.
We now concentrate on constructing an approximate solution $W$ of the equation
$\mathcal{E}(V)=0$ of the following form 
\begin{equation}\label{def_of_W_and_of_r}
    W(s,y) = Q_{b(s)}(y)+r(s)R(y), \qquad \text{with} \qquad r(s):=F(s,0)= \lambda^{\frac{1}{2}}(s)f(\tau(s),\sigma(s))
\end{equation}
where $b$ is a $C^1(\RR)$ function of $s$ to determine.
Fix a certain $\rho$ such that 
\begin{equation}\label{condition_on_rho}
0<\rho < \frac{1}{4} \min{\{ 2\beta-1,\, \frac{1}{4}\, ,2-2\beta \}}.
\end{equation}%
We work under the following assumptions on the parameters, 
chosen in view of the formal asymptotics~\eqref{formal_law_of_parameters}
\begin{equation}\tag{BS1}\label{BS1}
    \begin{cases}
        \lvert \sigma(s) - \frac{1}{1-\beta} s^{1-\beta}\rvert \;\leq\; s^{1-\beta-\rho} \,, \\
        \lvert \lambda(s) - s^{-\beta} \rvert \;\leq\; s^{-\beta-\rho} \,,\\
        \lvert b(s) - \beta s^{-1} \rvert \;\leq\; s^{-1-\rho} \,,
    \end{cases}
\end{equation}

We introduce the auxiliary functions (related to the heuristics \eqref{equation_with_l0}
and \eqref{l1})
\begin{equation}\label{definition-g-h}
    g(s)= \frac{b(s)}{\lambda^2(s)}+\frac{4}{\int Q}c_0\lambda^{-\frac{3}{2}}(s)\sigma^{-\theta}(s) \qquad\text{and}\qquad h(s) = \lambda^{\frac{1}{2}}(s) - \frac{1}{\int Q}\frac{2c_0}{1-\theta}\sigma^{1-\theta}(s).
\end{equation}

We also define the following quantities
\begin{equation}
    \vec{m}= \begin{pmatrix}

        \frac{\lambda_s}{\lambda}+b\\
        \frac{\sigma_s}{\lambda}-1
    \end{pmatrix} \qquad \text{and} \qquad \vec{M}= \begin{pmatrix}
        \Lambda\\
        \partial_y
    \end{pmatrix}.
\end{equation}

Some technical estimates
\begin{lemma}\label{lemma_with_all_estimates_on_r_F_etc}
    Under the assumptions \eqref{BS1},
    \begin{equation}\label{assumption_on_sigma_on_t}
    \sigma(s)> \frac{2}{3}\tau(s)+\frac{2}{3}x_0 \quad \text{for all } \,\, s\in [s_0,+\infty)
    \end{equation}
    and for $s_0$ large enough, the following estimates hold for $s \geq s_0 $
    \begin{equation}\label{estimates_on_r}
        \big\lvert r(s) - c_0 \lambda^{\frac{1}{2}}(s) \,\sigma^{-\theta}(s) \big\rvert \lesssim \lambda^{\frac{1}{2}}\sigma^{-\theta-2}\lesssim s^{-3+2\beta},\qquad  |r(s)|\lesssim s^{-1},
    \end{equation}
    \begin{equation}\label{estimate_on_r_s}
         \Big\lvert r_s - c_0\lambda^{\frac{1}{2}}\sigma^{-\theta}\Big(\frac{1}{2}\frac{\lambda_s}{\lambda}- \theta \frac{\sigma_s}{\sigma} \Big)\Big\rvert \lesssim_{\theta} s^{-4+2\beta} + s^{-3+2\beta}|\vec{m}|,
    \end{equation}
    \begin{equation}
        |r_s| \lesssim_{\theta} s^{-2} + s^{-1}|\vec{m}|,
    \end{equation}
    \begin{equation}\label{estimate_on_r_minus_F}
        e^{-\frac{3|y|}{4}}\big\lvert r(s)-F(s,y) \big\rvert \lesssim s^{-2}e^{-\frac{|y|}{4}}, \qquad \forall y \in \RR,
    \end{equation}
    \begin{equation}\label{estimates_on_norms_L2_of_F}
        \|F(s)\|_{L^2_y} \lesssim x^{-\theta + \frac{1}{2}}_0,
        \qquad \|\partial_y F(s) \|_{L^2_y} \lesssim s^{-\beta} x^{-\theta - \frac{1}{2}}_0,
    \end{equation}
    \begin{equation}\label{estimates_on_norms_L_infty_of_F}
        \|F(s) \|_{L^{\infty}_{y}} \lesssim s^{-\frac{\beta}{2}} x^{-\theta}_0, \qquad \|F(s)\|_{L^{\infty}(y>-2|b|^{-\gamma})} \lesssim s^{-1}, \qquad \|\partial_y F\|_{L^{\infty}(y>-2|b|^{-\gamma})} \lesssim s^{-2},
    \end{equation}
    \begin{equation}\label{estimate_on_L_infty_of_dj_F}
        \| \partial^j_y \big( F(s,y) \big) \|_{L^{\infty}_y} \lesssim s^{-\beta(j+\frac{1}{2})}, \qquad \text{for } j\geq 0,
    \end{equation}
    \begin{equation}\label{estimates_on_norms_L_infty_of_F_with_exp}
        \sup_{y\in \RR}\Big\{e^{-\frac{|y|}{10}}\lvert F(s,y) \rvert \Big\} \lesssim s^{-1}, \qquad \sup_{y\in \RR}\Big\{e^{-\frac{|y|}{10}}\lvert \partial_y F(s,y) \rvert \Big\} \lesssim s^{-2},
    \end{equation}
    \begin{equation}\label{estimates_on_scalar_prod_r_minus_F_and_Q}
        \bigg\lvert \big( \partial_y(5 Q^4(r-F)), Q \big) - c_0 \theta \Big( \int Q\Big) \lambda^{\frac{3}{2}} \sigma^{-\theta -1}\bigg\rvert \lesssim s^{-4+2\beta}.
    \end{equation}
\end{lemma}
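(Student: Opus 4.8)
The plan is to derive everything from two inputs: the pointwise persistence of the tail (Lemma~\ref{lemma_on_the_decaying_tail}), applied along the moving point $x=\sigma(s)$ and its $O(\lambda(s))$‑neighbourhood, and the elementary algebra of \eqref{BS1}. First I record the latter. For $s_0$ large, \eqref{BS1} gives two‑sided bounds $\lambda(s)\approx s^{-\beta}$, $\sigma(s)\approx s^{1-\beta}$, $b(s)\approx s^{-1}$, hence $\lambda/\sigma\approx s^{-1}$ and, since $\gamma=\tfrac34$, $|b|^{-\gamma}\approx s^{3/4}$ and $\lambda|b|^{-\gamma}\approx s^{3/4-\beta}=o(\sigma)$; moreover $\big|\tfrac{\lambda_s}\lambda+b\big|\le|\vec m|$ and $|\sigma_s-\lambda|\le\lambda|\vec m|$ by definition of $\vec m$. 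From $\tfrac{d\tau}{ds}=\lambda^3\lesssim s^{-3\beta}$ with $3\beta>1$, one gets $\tau(s)\lesssim s_0^{1-3\beta}$, uniformly small for $s_0$ large. Finally the single identity $(1-\beta)\theta=1-\tfrac\beta2$ (equivalent to $\theta=\tfrac{1-\beta/2}{1-\beta}$) converts all the relevant exponents: $\lambda^{1/2}\sigma^{-\theta}\approx s^{-1}$, $\lambda^{3/2}\sigma^{-\theta-1}\approx s^{-2}$, $\lambda^{1/2}\sigma^{-\theta-2}\approx s^{-3+2\beta}$, $\lambda^{3/2}\sigma^{-\theta-3}\approx s^{-4+2\beta}$, $\lambda^{5/2}\sigma^{-\theta-2}\approx s^{-3}$, $\lambda^{7/2}\sigma^{-\theta-3}\approx s^{-4}$. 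These will be used throughout without further comment.

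I would begin by proving the localisation \eqref{assumption_on_sigma_on_t}: since $\sigma_s=\lambda(1+O(|\vec m|))>0$, $\sigma$ is increasing, and combining $\sigma(s)\ge\sigma(s_0)$, the smallness of $\tau(s)$, and the size of $x_0$ relative to $s_0$ fixed in Section~\ref{section_construction}, one gets $\sigma(s)>\tfrac23\tau(s)+\tfrac23x_0>\tfrac12(\tau(s)+x_0)$ for all $s\ge s_0$; this places $x=\sigma(s)$ — and also $x=\sigma(s)+\lambda(s)y$ for $|y|\lesssim\sigma/\lambda$ — inside the region $x>\tfrac t2+\tfrac{x_0}2$ of Lemma~\ref{lemma_on_the_decaying_tail}. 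Hence at $x=\sigma(s)>x_0/2$ the lemma gives $f(\tau,\sigma)=c_0\sigma^{-\theta}+O(\sigma^{-\theta-2})$, $\partial_x f(\tau,\sigma)=-\theta c_0\sigma^{-\theta-1}+O(\sigma^{-\theta-3})$, $|\partial_t f(\tau,\sigma)|\lesssim\sigma^{-\theta-3}$. Multiplying the first relation by $\lambda^{1/2}$ yields \eqref{estimates_on_r}. Differentiating $r=\lambda^{1/2}f(\tau,\sigma)$ in $s$ (with $\tau_s=\lambda^3$) gives $r_s=\tfrac12\lambda^{1/2}\tfrac{\lambda_s}\lambda f(\tau,\sigma)+\lambda^{1/2}\big(\lambda^3\partial_t f(\tau,\sigma)+\sigma_s\,\partial_x f(\tau,\sigma)\big)$; inserting the three expansions produces the stated main term $c_0\lambda^{1/2}\sigma^{-\theta}\big(\tfrac12\tfrac{\lambda_s}\lambda-\theta\tfrac{\sigma_s}\sigma\big)$, while (after $\tfrac{\lambda_s}\lambda=-b+O(|\vec m|)$, $\sigma_s=\lambda(1+O(|\vec m|))$) the remainders are bounded by $\lambda^{1/2}\sigma^{-\theta-2}(s^{-1}+|\vec m|)+\lambda^{7/2}\sigma^{-\theta-3}+\lambda^{3/2}\sigma^{-\theta-3}(1+|\vec m|)\lesssim s^{-4+2\beta}+s^{-3+2\beta}|\vec m|$, which is \eqref{estimate_on_r_s}; the crude bound on $|r_s|$ follows from $\lambda^{1/2}\sigma^{-\theta}\approx s^{-1}$ and $\tfrac{\sigma_s}\sigma\lesssim s^{-1}(1+|\vec m|)$. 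For \eqref{estimate_on_r_minus_F}, I write $r-F=\lambda^{1/2}\big(f(\tau,\sigma)-f(\tau,\sigma+\lambda y)\big)$; when $|y|\le\sigma/(10\lambda)$ the endpoint stays in the tail region, so the mean value theorem applied to $f(\tau,\cdot)=f_0+(f(\tau,\cdot)-f_0)$, with $|f_0'|\lesssim|c_0|x^{-\theta-1}$ and $|\partial_x(f(\tau,\cdot)-f_0)|\lesssim x^{-\theta-3}$, gives $|r-F|\lesssim\lambda^{3/2}|y|\sigma^{-\theta-1}\lesssim s^{-2}|y|$, and $|y|e^{-3|y|/4}\lesssim e^{-|y|/4}$ closes it; when $|y|>\sigma/(10\lambda)$ one has $|r-F|\lesssim1$ and, since $|y|\gtrsim s$, $e^{-3|y|/4}\le e^{-|y|/2}e^{-|y|/4}\lesssim s^{-2}e^{-|y|/4}$.

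For the norms of $F$, the idea is to change variables $x=\sigma(s)+\lambda(s)y$, turning $L^2_y$‑norms of $F$ into $L^2_x$‑norms of $f(\tau)$: $\|F(s)\|_{L^2_y}=\|f(\tau)\|_{L^2}=\|f_0\|_{L^2}\lesssim x_0^{-\theta+1/2}$ by mass conservation for $f$ and \eqref{estimate_on_f0_norm_L2}, and $\|\partial_y F(s)\|_{L^2_y}=\lambda\|\partial_x f(\tau)\|_{L^2}$; estimating $\tfrac12\|\partial_x f(\tau)\|_{L^2}^2=E(f_0)+\tfrac16\int f^6(\tau)$ by energy conservation, with $\int f^6(\tau)$ bounded crudely via $\|f(\tau)\|_{L^\infty}\lesssim\|f(\tau)\|_{H^1}$ and $\theta>\tfrac32$, gives $\|\partial_x f(\tau)\|_{L^2}\lesssim x_0^{-\theta-1/2}$, i.e.\ \eqref{estimates_on_norms_L2_of_F}. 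The Gagliardo–Nirenberg inequality $\|g\|_{L^\infty}^2\lesssim\|g\|_{L^2}\|g'\|_{L^2}$ then upgrades this to $\|f(\tau)\|_{L^\infty}\lesssim x_0^{-\theta}$, so $\|F(s)\|_{L^\infty_y}\lesssim s^{-\beta/2}x_0^{-\theta}$; on $\{y>-2|b|^{-\gamma}\}$ one has $\sigma+\lambda y\ge\tfrac34\sigma$ (by $\lambda|b|^{-\gamma}=o(\sigma)$), whence Lemma~\ref{lemma_on_the_decaying_tail} gives $|F|\lesssim\lambda^{1/2}\sigma^{-\theta}\lesssim s^{-1}$ and $|\partial_y F|\lesssim\lambda^{3/2}\sigma^{-\theta-1}\lesssim s^{-2}$, i.e.\ \eqref{estimates_on_norms_L_infty_of_F}; and $\|\partial_y^j F(s)\|_{L^\infty_y}=\lambda^{j+1/2}\|\partial_x^j f(\tau)\|_{L^\infty}\lesssim\lambda^{j+1/2}\lesssim s^{-\beta(j+1/2)}$ since $f$ is bounded in every $H^s$, which is \eqref{estimate_on_L_infty_of_dj_F}. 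Finally \eqref{estimates_on_norms_L_infty_of_F_with_exp} follows by splitting at $y=-2|b|^{-\gamma}$: on the right I use the previous bounds, on the left $e^{-|y|/10}\le e^{-|b|^{-\gamma}/5}\lesssim e^{-c\,s^{3/4}}$ beats every power of $s$.

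The last and most delicate estimate is \eqref{estimates_on_scalar_prod_r_minus_F_and_Q}, and it is the point that feeds the modulation equation for $b$. Integrating by parts (the boundary term vanishes since $Q^4\to0$ and $r-F$ is bounded), $(\partial_y(5Q^4(r-F)),Q)=-\int(Q^5)'(y)\,\big(r(s)-F(s,y)\big)\,dy$. I split at $|y|=N:=\sigma/(1000\lambda)$: on $|y|>N$, $|(Q^5)'|\lesssim e^{-5|y|}\lesssim e^{-c\,s}$ and $|r-F|\lesssim1$, contributing $O(e^{-cs})$; on $|y|\le N$ the endpoint and all intermediate points lie in the tail region with $\xi_y\approx\sigma$, so Taylor gives $r-F=-\lambda^{3/2}y\,\partial_x f(\tau,\sigma)+O(\lambda^{5/2}y^2\sigma^{-\theta-2})$. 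Using $\int(Q^5)'(y)\,y\,dy=-\int Q^5=-\int Q$ (the last equality from \eqref{equation_of_Q}, $Q^5=Q-Q''$) and $\partial_x f(\tau,\sigma)=-\theta c_0\sigma^{-\theta-1}+O(\sigma^{-\theta-3})$ produces $(\partial_y(5Q^4(r-F)),Q)=c_0\theta\big(\int Q\big)\lambda^{3/2}\sigma^{-\theta-1}+O\big(\lambda^{3/2}\sigma^{-\theta-3}+\lambda^{5/2}\sigma^{-\theta-2}+e^{-cs}\big)$, and $\lambda^{3/2}\sigma^{-\theta-3}\approx s^{-4+2\beta}$, $\lambda^{5/2}\sigma^{-\theta-2}\approx s^{-3}\lesssim s^{-4+2\beta}$ (as $\beta>\tfrac12$) give the claim. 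The computations are all routine bookkeeping once the exponents are run through $(1-\beta)\theta=1-\beta/2$; the genuine obstacles are three. First, securing \eqref{assumption_on_sigma_on_t}, which hinges on the size of $x_0$ chosen in Section~\ref{section_construction} and on $\tau$ being small, and without which Lemma~\ref{lemma_on_the_decaying_tail} cannot be invoked along $x=\sigma(s)$. Second, the far‑left region $\{y\lesssim-\sigma/\lambda\}$, where one has no pointwise control on $f$, but the exponential weights ($e^{-3|y|/4}$, $e^{-|y|/10}$, $Q^4$) make its contribution $O(e^{-cs})$, hence harmless. Third, obtaining the sharp power $x_0^{-\theta-1/2}$ in \eqref{estimates_on_norms_L2_of_F}, which requires using both conservation laws of $f$ (mass and energy, with the precise size of $E(f_0)$) rather than a Sobolev bound alone; I expect this last bookkeeping to be the main technical hurdle.
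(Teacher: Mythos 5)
Your proof is correct and follows essentially the same route as the paper's: everything is reduced to applying Lemma~\ref{lemma_on_the_decaying_tail} along $x=\sigma(s)+\lambda(s)y$ in the tail region guaranteed by \eqref{assumption_on_sigma_on_t}, using conservation of mass and energy for $f$ for the $L^2$ norms, and killing the far-left region $\{y\lesssim -\sigma/\lambda\}$ by the exponential weights. The only (cosmetic) difference is in \eqref{estimates_on_scalar_prod_r_minus_F_and_Q}, where you Taylor-expand $r-F$ directly and use $\int (Q^5)'y=-\int Q^5$, whereas the paper moves the derivative onto $F$ to get $-\int Q^5\,\partial_y F$ and compares $\partial_y f(\tau,\lambda y+\sigma)$ with $f_0'(\sigma)$; both yield identical main terms and error sizes, and your exponent bookkeeping via $(1-\beta)\theta=1-\beta/2$ is accurate throughout.
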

\begin{proof}
    The first estimate in \eqref{estimates_on_r} comes from Lemma \ref{lemma_on_the_decaying_tail}, the assumption \eqref{assumption_on_sigma_on_t} on $\sigma(s)$ and \eqref{BS1}. Write 
    \begin{equation}
        \lvert r(s)-c_0 \lambda^{\frac 12}(s)\sigma^{-\theta}(s) \rvert = \lvert \lambda^{\frac{1}{2}}(s) \rvert \lvert f(\tau(s),\sigma(s))-c
        _0 \sigma^{-\theta}\rvert \lesssim \lambda^{\frac{1}{2}}(s) \sigma^{-\theta-2}(s) \lesssim s^{-3+2\beta}.
    \end{equation}
    The second estimate comes from the previous one combined with \eqref{BS1}. 

\vspace{0.4cm}
    In order to prove \eqref{estimate_on_r_s}, we prove first the following estimate
    \begin{equation}\label{estimate_on_r_s_another}
        \Big\lvert r_s - \frac{1}{2}\frac{\lambda_s}{\lambda}r + \theta\frac{\sigma_s}{\sigma}r \Big\lvert \lesssim (1+\theta)s^{-2}+(1+\theta)s^{-2}\Big\lvert \frac{\sigma_s}{\lambda}-1 \Big\rvert.
    \end{equation}
    Write 
    \begin{equation}
        r_s - \frac{1}{2}\frac{\lambda_s}{\lambda}r = \lambda^{\frac{7}{2}}(\partial_s f)(\tau(s),\sigma(s))+\lambda^{\frac{1}{2}}\sigma_s (\partial_y f)(\tau(s), \sigma(s)).
    \end{equation}
    Using the relation \eqref{relation_on_diff_of_tau_and_zeta} and  Lemma \ref{lemma_on_the_decaying_tail}, we get
    \begin{equation}
         \lvert \lambda^{\frac{1}{2}}\partial_s f(\tau(s),\sigma(s)) \rvert = \lvert \lambda^{\frac{7}{2}}\partial_1f(\tau(s),\sigma(s)) \rvert \lesssim \lambda^{\frac{7}{2}}\sigma^{-\theta-3}.
    \end{equation}
    Using \eqref{assumption_on_sigma_on_t}, Lemma \ref{lemma_on_the_decaying_tail}, applying the previous estimate \eqref{estimates_on_r} and also writing $|\sigma_s|\leq \lambda\big(\big|\frac{\sigma_s}{\lambda} -1 \big| + 1\big)$, we obtain
    \begin{equation}
        \big\lvert \lambda^{\frac{1}{2}}\sigma_s (\partial_y f)(\tau(s),\sigma(s)) + \theta \frac{\sigma_s}{\sigma}r \big\rvert \lesssim (1+\theta)\lambda^{\frac{3}{2}}\sigma^{-\theta-1}\Big(\Big\lvert \frac{\sigma_s}{\lambda}-1\Big\rvert + 1 \Big).
    \end{equation}
    Combining these estimates and using \eqref{BS1}, we get estimate \eqref{estimate_on_r_s_another}. We deduce the proof of \eqref{estimate_on_r_s} gathering the two estimates \eqref{estimate_on_r_s} and \eqref{estimates_on_r}.
    
    \vspace{0.4cm}
    From the definition of $F$ and $r$, we get the following
    \begin{equation}
        \lvert r(s)-F(s,y) \rvert = \lambda^{\frac{1}{2}}(s)\lvert f(\tau(s),\sigma(s))   -f(\tau(s),\lambda(s)y+ \sigma(s)) \rvert.
    \end{equation}
    
    We divide the study into two cases: $\lambda(s)y>-\frac{1}{4}\sigma(s)$ and $\lambda(s)y<-\frac{1}{4}\sigma(s)$.
    For the first case, it holds $\lambda(s)y + \sigma(s) > \frac{3}{4}\sigma(s) > \frac t2 +\frac{x_0}{2}$, so Lemma \ref{lemma_on_the_decaying_tail} applies. By the mean value theorem, we get the following estimate for such $y$
    \begin{equation}
        \big\lvert r(s) - F(s,y)\big\lvert \lesssim \lambda^{\frac 32}|y|\|\partial_y f(s)\|_{L^{\infty}(>\frac 34 \sigma(s))}
    \end{equation}
    
    In the region $\lambda(s)y<-\frac{1}{4}\sigma(s)$, by assumption \eqref{BS1}, it follows that $y< -\frac 14 \frac{\sigma}{\lambda} < -c s$ for some $c>0$.
    
    Therefore
    \begin{equation}
    \begin{split}
        &e^{-\frac{3}{4}|y|} \big\lvert r(s)-F(s,y) \big\rvert \\
        &\lesssim
        e^{-\frac{|y|}{2}}|y|\lambda^{\frac{3}{2}}(s) \Big( e^{-\frac{|y|}{4}}\|\partial_y f(\tau(s)) \|_{L^{\infty}(>\frac{3}{4}\sigma(s))} + e^{-\frac{\sigma(s)}{16 \lambda(s)}} \| \partial_y f(\tau(s))\|_{L^{\infty}} \Big)\\
        &\lesssim 
        e^{-\frac{|y|}{4}}\lambda^{\frac{3}{2}}(s) \Big( \sigma^{-\theta-1}(s) + e^{-cs}\Big) \lesssim e^{-\frac{|y|}{4}}s^{-2}.
    \end{split}
    \end{equation}

By conservation of the mass, it follows that
\begin{equation}
    \| F(s)\|_{L^2_y} \lesssim \lambda^{\frac{1}{2}}(s)\| f(\tau(s),\lambda(s)y+\sigma(s))\|_{L^2_y} \lesssim \|f(\tau(s))\|_{L^2_x} \lesssim x^{-\theta+\frac{1}{2}}_0.
\end{equation}
By energy conservation and by the Gagliardo-Nirenberg estimate, we get
\begin{equation}
    \|\partial_x f(\tau(s))\|^2_{L^2_x} \lesssim E(f_0) + \int f^6(\tau(s),x)\,dx
    \lesssim x^{-\theta-\frac{1}{2}} + \|\partial_x f(\tau(s))\|^2_{L^2_x}\| f_0\|^4_{L^2_x}.
\end{equation}
Therefore, using \eqref{estimate_on_f0_norm_L2} and \eqref{BS1}, we obtain 
\begin{equation}
    \|\partial_yF(s)\|_{L^2_y} = \lambda(s)\|\partial_x f(\tau(s))\|_{L^2_x} \lesssim s^{-\beta} x^{-\theta-\frac{1}{2}}_0.
\end{equation}

\vspace{0.4cm}

By Sobolev embedding $H^{1}(\RR) \subset C_0(\RR)$ and by \eqref{BS1}, it holds
\begin{equation}
    \|F(s)\|^2_{L^{\infty}} \lesssim \|F(s)\|_{L^2_y}\|\partial_y F(s) \|_{L^2_y} \lesssim s^{-\beta}x^{-2\theta}_0.
\end{equation}

For the estimate on $\{y>-2|b|^{-\gamma}\}$, we observe that for such $y$ by \eqref{BS1} and since $\gamma<1$, for $s_0 \gg 1$, it holds $\lambda(s)y+\sigma(s)> \frac{3}{4}\sigma(s)$.
Therefore, for $y>-2|b|^{-\gamma}$, we get
\begin{equation}
    \lvert F(s,y) \rvert \lesssim \lambda^{\frac{1}{2}}(s)\sigma^{-\theta}(s) \lesssim s^{-1} \quad \text{  and  }\quad \lvert \partial_y F(s,y) \rvert \lesssim \lambda^{\frac{3}{2}}\sigma^{-\theta-1} \lesssim s^{-2}.
\end{equation}
By Sobolev embedding and by the Lemma \ref{lemma_on_the_decaying_tail}, we get the estimate \eqref{estimate_on_L_infty_of_dj_F} 
\begin{equation}
    \| \partial^j_y F(s,y) \|_{L^{\infty}_y}  \lesssim |\lambda^{j + \frac{1}{2}}|\|f(\tau(s))\|_{H^{j+1}} \lesssim s^{-\beta(j+\frac{1}{2})} \,\delta(x^{-1}_0).
\end{equation}


In order to prove the estimates in \eqref{estimates_on_norms_L_infty_of_F_with_exp}, we proceed as for \eqref{estimate_on_r_minus_F}, separating into two cases for $\lambda(s)y$ comparing with $-\frac{1}{4}\sigma(s)$, then using the Lemma \ref{lemma_on_the_decaying_tail} , \eqref{estimate_on_L_infty_of_dj_F} and \eqref{BS1}, we get, for $y \in \RR$,
\begin{equation}
\begin{split}
    e^{-\frac{|y|}{10}}\lvert F(s,y) \rvert 
    & = e^{-\frac{|y|}{10}}\lambda^{\frac{1}{2}}(s)\lvert f(\tau(s),\sigma(s)+y\lambda(s)) \rvert \\
    & \lesssim \lambda^{\frac{1}{2}}(s)\big( \sigma^{-\theta}(s)+ e^{-cs} \big) \lesssim s^{-1}
\end{split}
\end{equation}
and
\begin{equation}
\begin{split}
    e^{-\frac{|y|}{10}}\lvert \partial_y F(s,y) \rvert 
    & = e^{-\frac{|y|}{10}}\lambda^{\frac{3}{2}}(s)\lvert (\partial_y f)(\tau(s),\lambda(s)y+\sigma(s)) \rvert \\
    & \lesssim \lambda^{\frac{3}{2}}(s)\big( \sigma^{-\theta-1}(s)+ e^{-cs} \big) \lesssim s^{-2}
\end{split}
\end{equation}
for some $c>0$.

First, we write
\begin{equation}
        \big( \partial_y(5Q^4 (r-F)), Q \big) = -\int (Q^5)' (r-F) = - \int Q^5 \partial_y F = -\lambda^{\frac{3}{2}} \int Q^{5}(y) (\partial_y f)(\tau(s),\lambda y + \sigma)\,dy.
\end{equation}
Using that $\int Q^5 = \int Q$, we get
\begin{equation}
\begin{split}
    & \bigg\lvert \big( \partial_y(5Q^4(r-F)), Q\big) - c_0 \theta \Big(\int Q\Big)\lambda^{\frac{3}{2}}\sigma^{-\theta-1} \bigg\rvert \\
    & \lesssim \lambda^{\frac{3}{2}} \bigg[ \int Q^5 \big\lvert\partial_y f(\tau(s),\lambda y +\sigma) - f'_{0}(\sigma)\big\rvert +\Big( \int Q \Big)\big\lvert f'_0(\sigma)+ c_0\theta \sigma^{-\theta-1} \big\rvert \bigg].
\end{split}
\end{equation}
By \eqref{assumption_on_sigma_on_t}, the term on the right-hand side is zero.

Separating in two cases for $\lambda y $ compared with $-\frac{1}{4}\sigma$, using \eqref{assumption_on_sigma_on_t}, Lemma \ref{lemma_on_the_decaying_tail}, the mean value theorem and \eqref{BS1}, we obtain
\begin{equation}
    \begin{split}
        &\lambda^{\frac{3}{2}}\int Q^5 \big\lvert \partial_y f(\tau(s),\lambda y +\sigma) -f'_0(\sigma) \big\rvert\\
        & \leq \lambda^{\frac{3}{2}}\int Q^5 \Big[ \big\lvert \partial_y f(\tau(s),\lambda y +\sigma) - f'_0(\lambda y+\sigma) \big\rvert + \big\lvert f'_0(\lambda y +\sigma) - f'_0(\sigma)\big\rvert \Big]\\
        & \lesssim \lambda^{\frac{3}{2}} \big( \sigma^{-\theta-3} + e^{-cs} \big) + \lambda^{\frac{5}{2}}\big( \sigma^{-\theta-2}+e^{-cs} \big)
        \lesssim s^{-4+2\beta}.
    \end{split}
\end{equation}

    
\end{proof}

\begin{lemma}[Mass and energy of $W+F$]
Under the assumption \eqref{BS1}, for $s_0$ large enough, it holds for $s\geq s_0$
\begin{equation}\label{estimate_on_mass_W_plus_F}
    \bigg\lvert \int (W+F)^2-\Big( \int Q^2 + 2b\int P Q + \frac{1}{2}r \int Q \Big) \bigg\rvert \lesssim s^{-2+\gamma}+x^{-2\theta+1}_0,
\end{equation}
\begin{equation}\label{estimate_on_energy_W_plus_F}
 \bigg\lvert   \lambda^{-2}E(W+F) + \frac{1}{16}\Big( \int Q \Big)^2g(s) \bigg\rvert \lesssim s^{-3+2\beta} + \lambda^2 x^{-2\theta-1}_0.
\end{equation}
    
\end{lemma}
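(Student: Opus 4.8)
Both identities are obtained by substituting $W=Q_b+rR$ into the expressions and expanding, isolating the $Q_b$-part (governed by the already-quoted estimates \eqref{estimate_mass_of_Qb}--\eqref{estimate_energy_of_Qb} and the equation of $Q_b$), the $rR$-part, and the tail part coming from $F$. The algebraic leading terms are produced by the exact relations $(Q,R)=-\frac34\int Q$, $(P,Q)=\frac1{16}(\int Q)^2$ and $\int Q^5=\int Q$, while every remaining term is an error controlled by the pointwise, $L^2$ and weighted bounds of Lemma~\ref{lemma_with_all_estimates_on_r_F_etc} together with \eqref{BS1}. A device used systematically is the parity of $Q$ and of $R$: whenever $F(s,y)=\lambda^{\frac12}f(\tau,\lambda y+\sigma)$ is paired against an even function localised near $y=0$, one Taylor-expands $f$ around $x=\sigma$; the zeroth order reproduces $r$ times the integral of that even function, the first order vanishes by parity, and the remainder is $O(\lambda^2\sigma^{-\theta-2})$ or better, which after the $\lambda^{-2}$-normalisation is of the required size $s^{-3+2\beta}$.

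\textbf{Mass.} One writes
\[
\int(W+F)^2=\int Q_b^2+2r\,(Q_b,R)+r^2\|R\|_{L^2}^2+2(Q_b,F)+2r\,(R,F)+\|F\|_{L^2}^2 .
\]
Insert \eqref{estimate_mass_of_Qb} for $\int Q_b^2$ (error $\lesssim|b|^{2-\gamma}\lesssim s^{-2+\gamma}$); split $(Q_b,F)=(Q,F)+b(P_b,F)$, with $(Q,F)=r\int Q+O(\lambda^{\frac52}\sigma^{-\theta-2})$ by the parity device and $|b|\,|(P_b,F)|\lesssim|b|^{1-\gamma}\,\|F\|_{L^\infty(y>-2|b|^{-\gamma})}\lesssim s^{-2+\gamma}$ by \eqref{estimates_on_norms_L_infty_of_F}. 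Then $2(Q_b,F)+2r\,(Q_b,R)=2r\int Q-\frac32 r\int Q+(\mathrm{err})=\frac12 r\int Q+(\mathrm{err})$, while $r^2\|R\|_{L^2}^2+|r|\,|(R,F)|+|b|\,|r|\,|(P_b,R)|\lesssim s^{-2}$ and $\|F\|_{L^2}^2\lesssim x_0^{-2\theta+1}$ by \eqref{estimates_on_norms_L2_of_F}. This yields \eqref{estimate_on_mass_W_plus_F}.

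\textbf{Energy.} Expand $E(W+F)$ around $Q_b$, namely $E(W+F)=E(Q_b)+E'(Q_b)[rR+F]+\frac12E''(Q_b)[rR+F,rR+F]+(\text{cubic and higher})$, where $E'(u)[\phi]=-\int(u''+u^5)\phi$. Insert \eqref{estimate_energy_of_Qb} and $(P,Q)=\frac1{16}(\int Q)^2$ to get $E(Q_b)=-\frac{b}{16}(\int Q)^2+O(b^2)$. For the linear term use $Q_b''+Q_b^5=Q-b(\mathcal{L}P-P)+O(b^2)$ together with the parity device for $(Q,F)$ and the first estimate in \eqref{estimates_on_r} (replacing $r$ by $c_0\lambda^{\frac12}\sigma^{-\theta}$): this gives $E'(Q_b)[rR+F]=\frac34 r\int Q-r\int Q+(\mathrm{err})=-\frac{c_0}{4}\Big(\int Q\Big)\lambda^{\frac12}\sigma^{-\theta}+(\mathrm{err})$. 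Multiplying by $\lambda^{-2}$ and recalling the definition of $g$, the main terms assemble into $-\frac1{16}(\int Q)^2 g(s)$; the tail contributions collapse into the $\lambda^2 x_0^{-2\theta-1}$ error via \eqref{estimates_on_norms_L2_of_F}, \eqref{estimates_on_norms_L_infty_of_F_with_exp} and \eqref{estimate_on_L_infty_of_dj_F}, the cross term $\int\partial_yW\,\partial_yF$ being reduced by integration by parts and parity to $O(\lambda^{\frac52}\sigma^{-\theta-2})$ modulo an $O(b)$ non-even correction coming from $Q_b$ that is likewise $O(s^{-3})$.

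\textbf{Main obstacle.} The delicate point is pushing the energy error all the way down to $s^{-3+2\beta}$. The naive sizes of $E(Q_b)+b(P,Q)$, of the cross terms $\sim rb$ produced by the $-b(\mathcal{L}P-P)$ correction in $Q_b''+Q_b^5$, and of the $\sim r^2$ contributions of $\frac12E''(Q_b)[\cdot,\cdot]$, are all only $O(s^{-2})$, so that after the factor $\lambda^{-2}=s^{2\beta}$ one is a factor $s$ short. Closing the estimate therefore rests on the cancellations the approximate profile is designed to produce: the $b^2$- and $rb$-order contributions must telescope into genuinely $O(s^{-3})$ terms. This is where one uses the equation of $Q_b$ and the refined orthogonality $(\Psi_b,Q)\lesssim|b|^3$, the scaling (Pohozaev) identity $E'(u)[\Lambda u]=2E(u)$, the identities for $P$ (in particular $(\mathcal{L}P)'=\Lambda Q$ and $(P,Q')=0$), and the fact that $r=F(s,0)$ is chosen precisely so that the term $rR$ cancels the leading soliton--tail interaction. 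This reorganisation, rather than the bookkeeping of the individual terms, is the core of the lemma; the $F$-dependent bookkeeping (new here with respect to \cite{MMPIII} because of the weighted space) is routine once the weighted bounds of Lemma~\ref{lemma_with_all_estimates_on_r_F_etc} are available.
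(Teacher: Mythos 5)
Your mass argument is essentially the paper's: expand $\int(W+F)^2$ around $\int Q_b^2$, extract the leading part with $(Q,R)=-\frac34\int Q$ and \eqref{estimate_mass_of_Qb}, and bound the remaining cross terms using $\eqref{estimate_on_r_minus_F}$--\eqref{estimates_on_norms_L2_of_F}, with $2b\int P_b F$ providing the $s^{-2+\gamma}$ endpoint and $\|F\|_{L^2}^2$ giving $x_0^{-2\theta+1}$. Your parity refinement $(Q,F)-r\int Q=O(\lambda^{5/2}\sigma^{-\theta-2})\sim s^{-3}$ is sharper than the paper's direct use of \eqref{estimate_on_r_minus_F}, which only gives $O(s^{-2})$, but since the target here is $s^{-2+\gamma}$ the coarser bound already suffices.

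The energy argument starts out structurally identical to the paper (expand around $Q_b$, use \eqref{estimate_energy_of_Qb} and $(P,Q)=\frac1{16}(\int Q)^2$, reduce the linear term to $-\int Q(rR+F)$ via $-Q''-Q^5=-Q$, and then match against $\lambda^2 g$). However, your \emph{Main obstacle} paragraph goes astray. You correctly observe that the contributions $E(Q_b)+b(P,Q)\sim b^2$, the $rb$-corrections, and the $r^2$ contributions of $E''(Q_b)$ are all only $O(s^{-2})$, hence $O(s^{-2+2\beta})$ after multiplication by $\lambda^{-2}$, which exceeds $s^{-3+2\beta}$. You then propose to resolve this by invoking cancellations via the Pohozaev identity $E'(u)[\Lambda u]=2E(u)$, the refined orthogonality $(\Psi_b,Q)\lesssim|b|^3$, and the defining equations of $P$. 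This is not what the paper does, and it would not work: the $b^2$-coefficient of $E(Q_b)$, $\tfrac12\int(P_b')^2-\tfrac52\int Q^4P_b^2$, and the $r^2$-coefficient $E''(Q)[R,R]=5(R,Q^4)-\|R\|_{L^2}^2$ are genuinely nonzero constants, the Pohozaev identity concerns $E'(Q_b)[\Lambda Q_b]$ rather than $E'(Q_b)[rR+F]$, and none of the listed identities makes these quadratic contributions telescope to $O(s^{-3})$.

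The actual resolution is more mundane. The paper simply accepts the $O(s^{-2})$ errors, and its intermediate display (with $g$ read as $\lambda^2 g$, which is what makes the algebra match) gives $\lvert E(W+F)+\tfrac1{16}(\int Q)^2\lambda^2 g\rvert\lesssim s^{-3+2\beta}+\lambda^2 x_0^{-2\theta-1}$; multiplying through by $\lambda^{-2}=s^{2\beta}$ then yields $s^{-3+4\beta}+x_0^{-2\theta-1}$. This is precisely the error that appears when the lemma is used in the proof of \eqref{estimate_on_L2_norm_of_grad_vare}, which confirms that the exponents in the statement of \eqref{estimate_on_energy_W_plus_F} should read $s^{-3+4\beta}+x_0^{-2\theta-1}$ rather than $s^{-3+2\beta}+\lambda^2 x_0^{-2\theta-1}$. (The latter cannot be correct: $\lambda^{-2}E(F)=E(f_0)\approx x_0^{-2\theta-1}$ is constant in $s$ and cannot be bounded by $\lambda^2 x_0^{-2\theta-1}\to 0$.) With the corrected bound, $\lambda^{-2}\cdot s^{-2}=s^{-2+2\beta}\leq s^{-3+4\beta}$ for $\beta\geq\tfrac12$ and all the naive term sizes close directly, exactly as in the paper. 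In short: the spot where you stopped and speculated about deep cancellations is actually closed by elementary bookkeeping once the target exponent is read correctly; the cancellation story you sketched is a gap, not the intended mechanism.
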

\begin{proof}
    First, we write, using $\int QR=-\frac 34 \int Q$,
    \begin{equation}
    \begin{split}
         &\int (W+F)^2 - \int Q^2_b - \frac{1}{2}r \int Q = \\
         &= 2br\int P_b R + 2\int Q(F-r)+2b\int P_b F + r^2 \int R^2 + 2r \int R F + \int F^2.
    \end{split}
    \end{equation}
    From the conservation of the $L^2$ norm, we get
    \begin{equation}
        \|F\|^2_{L^2_y} = \| f\|^2_{L^2_x} = \|f_0\|^2_{L^2} \approx x^{-2\theta+1}_0.
    \end{equation}
    By the following property of $Q$, $|Q^{(k)}(y)|\lesssim e^{-|y|}$ on $\RR$, for $k \geq 0$, the estimates \eqref{estimate_on_r_minus_F}, \eqref{estimates_on_r}, \eqref{BS1} and since $P$ is bounded and $R \in \mathcal{Y}$, we obtain
    \begin{equation}
        \bigg\lvert 2 \int Q(F-r) + 2br\int P_bR + r^2 \int R^2\bigg\rvert \lesssim s^{-2}.
    \end{equation}
    The estimates \eqref{estimates_on_r} and \eqref{estimates_on_norms_L_infty_of_F_with_exp} yield
    \begin{equation}
        \bigg\lvert 2r\int R F \bigg\rvert \lesssim s^{-2}.
    \end{equation}
    Using the property $|P(y)|\lesssim e^{-\frac{|y|}{2}}$ for $y\geq0$, the estimate \eqref{estimates_on_norms_L_infty_of_F} and that $\gamma=\frac{3}{4}$, we get
    \begin{equation}
    \begin{split}
        \bigg\lvert 2b\int P_b F \bigg\rvert 
        & \lesssim 2|b|\| F(s)\|_{L^{\infty}(y>-2|b|^{-\gamma})} \Big( \int^{0}_{-2|b|^{-\gamma}} |P(y)|\,dy + \int_{y>0}e^{-\frac{|y|}{2}}\, dy \Big) \\
        & \lesssim
        s^{-2}(|b|^{-\gamma} + c) \lesssim s^{-2+\gamma}.
    \end{split}
    \end{equation}
    Hence, it follows that
    \begin{equation}
        \bigg\lvert \int (W+F)^2 - \int Q^2_b - \frac{1}{2}r \int Q \bigg\rvert \lesssim s^{-2+\gamma}+x^{-2\theta+1}_0.
    \end{equation}
    The estimate \eqref{estimate_on_mass_W_plus_F} then follows from the mass property \eqref{estimate_mass_of_Qb} of $Q_b$ combined with the previous estimate.

    \vspace{0.4cm}
    We rewrite the energy of $E(W+F)$ as
    \begin{equation}
    \begin{split}
        E(W+F) 
        & = E(Q_b) + E(F) + \int Q'_b\partial_y(rR+F) + r\int R' \partial_yF + \frac{1}{2}r^2\int (R')^2\\
        & - \int Q^5_b (rR+F) - \frac{1}{6}\int \Big((Q_b+rR+F)^6 - Q^6_b - F^6 - 6Q^5_b (rR+F)  \Big).
    \end{split}
    \end{equation}
    Using the equation \eqref{equation_of_Q} of $Q$, the estimates \eqref{estimate_energy_of_Qb}, \eqref{properties_of_R}, \eqref{relation_on_scalar_prod_P_and_Q}, then the estimates \eqref{estimate_on_r_minus_F}, \eqref{estimates_on_r} on $F-r$ and on $r$, the explicit expression of $g$ and \eqref{BS1}, we write
    \begin{equation}
    \begin{split}
        & \Big\lvert E(Q_b) + \int (-Q''-Q^5)(rR+F) + \frac{1}{16}\Big(\int Q\Big)^2 g(s) \Big\rvert \\
        & \lesssim b^2 + \Big\lvert \int QF - \frac{3}{4}r\int Q + \frac{b}{16}\Big(\int Q\Big)^2 - \frac{1}{16}\Big(\int Q \Big)^2 g(s)\Big\rvert\\
        & \lesssim s^{-2} + \frac{1}{4}\Big(\int Q\Big)\Big\lvert r - c_0 \lambda^{\frac{1}{2}}\sigma^{-\theta}\Big\rvert \lesssim s^{-3+2\beta}.
    \end{split}
    \end{equation}
    
    By conservation of the energy, we get
    \begin{equation}
        E(F(s)) = \lambda^2E(f(\tau(s))) = \lambda^2 E(f_0) \approx \lambda^2 x^{-2\theta - 1}_0.
    \end{equation}
    By the properties of $Q$ and of $R$, the decay on the right of $P$, the estimates \eqref{estimates_on_norms_L_infty_of_F_with_exp} on $F$ and $\partial_yF$, the estimate \eqref{estimates_on_r}, the value of $\gamma = \frac{3}{4}$ and \eqref{BS1}, we obtain
    \begin{equation}
    \begin{split}
        & \Big\lvert \int Q'_b \partial_y(rR+F) + \int Q''(rR+F) \Big\rvert
        + \Big\lvert \int Q^5_b(rR+F) - \int Q^5 (rR+F) \Big\rvert \\
        & +\Big\lvert r\int R' \partial_y F + r^2 \int (R')^2 \Big\rvert + \Big\vert \int \Big( (Q_b+rR+F)^6- Q^6_b - F^6 - 6Q^5_b(rR+F) \Big) \Big\rvert \lesssim s^{-2}.
    \end{split}
    \end{equation}
    Combining the last three estimates and the expression of $E(W+F)$, yields the result.
\end{proof}

\begin{lemma}
    The term $\mathcal{E}(W)$ can be rewritten as
    \begin{equation}\label{equation_rewriting_of_error}
        \mathcal{E}(W) = -\vec{m}\cdot\vec{M}Q + \mathcal{R}.
    \end{equation}
    Moreover, for $s_0$ large enough, for $s \geq s_0$, it holds
    \begin{equation}\label{estimate_on_scalar_prod_R_with_phi}
        \big\lvert \big( \mathcal{R},\phi \big) \big\rvert \lesssim  s^{-2} + |b_s| + s^{-1}|\vec{m}|, \quad \forall \phi \in \mathcal{Y},
    \end{equation}
    \begin{equation}\label{estimate_on_norms_L_2_B_of_R}
        \|\mathcal{R} \|_{L^2_B} + \| \partial_y \mathcal{R}\|_{L^2_B} \lesssim |b_s| + s^{-1}|\vec{m}|+s^{-2}
    \end{equation}
    and
    \begin{equation}\label{estimate_on_scalar_prod_R_Q}
        \bigg\lvert\big( \mathcal{R}, Q\big) -\frac{1}{16}\Big(\int Q\Big)^2\bigg[  b_s+2b^2 - \frac{4c_0}{\int Q}\lambda^{\frac{1}{2}}\sigma^{-\theta}\Big( \frac{3}{2}\frac{\lambda_s}{\lambda} + \theta \frac{\sigma_s}{\sigma}\Big) \bigg] \bigg\rvert \lesssim  |b_s|e^{-\frac{s^{\gamma}}{2}} + s^{-4+2\beta} + s^{-1}|\vec{m}|.
    \end{equation}
\end{lemma}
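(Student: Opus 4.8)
The plan is to compute $\mathcal{E}(W)$ by a direct expansion, read off the resonant part $-\vec m\cdot\vec M Q$, and then estimate the explicit remainder $\mathcal R$ term by term. Write $\vec m=(m_1,m_2)$ with $m_1=\frac{\lambda_s}{\lambda}+b$ and $m_2=\frac{\sigma_s}{\lambda}-1$.

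\emph{Step 1 (algebraic expansion).} Insert $W=Q_b+rR$ into \eqref{def_of_error_term_mathcal_E}. For the dispersive/nonlinear block $\partial_y(\partial_y^2W-W+(W+F)^5-F^5)$, expand $(W+F)^5-F^5=W^5+5W^4F+\sum_{j\geq2}\binom5jW^{5-j}F^{j}$, split $\partial_y(\partial_y^2W-W+W^5)$ using the profile equation $(\partial_y^2Q_b-Q_b+Q_b^5)'=-\Psi_b-b\Lambda Q_b+2b^2\frac{\partial Q_b}{\partial b}$ together with $\mathcal L R=5Q^4$ rewritten as $R''-R+5Q^4R=-5Q^4$. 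The key point is that the linear-in-$r$ dispersive contribution $\partial_y(r(R''-R)+5Q_b^4rR)$ plus the leading $``5Q^4\!\cdot r"$ piece of the soliton–tail interaction $5W^4F$ collapse into $\partial_y(5Q^4(F-r))$, which is already small by \eqref{estimate_on_r_minus_F}; this is precisely the role of the compensation term $rR$. Next, $-b\Lambda Q_b$ (from the profile equation) combines with $-\frac{\lambda_s}{\lambda}\Lambda Q_b$ into $-m_1\Lambda Q_b$, and with $-\big(\frac{\sigma_s}{\lambda}-1\big)\partial_yQ_b=-m_2Q_b'$, so from $Q_b=Q+bP_b$ one extracts $-m_1\Lambda Q-m_2Q'=-\vec m\cdot\vec M Q$. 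This identifies $\mathcal R$ as the finite sum: $b_s\frac{\partial Q_b}{\partial b}+r_sR$ (from $W_s$), $-\Psi_b+2b^2\frac{\partial Q_b}{\partial b}$, $\partial_y(5Q^4(F-r))$, the lower-order interaction corrections (those with $W^4-Q^4$, $Q_b^4-Q^4$, and the terms quadratic and higher in $(r,F)$), the frozen-modulation pieces $-m_1b\Lambda P_b-m_2bP_b'$, and $-\frac{\lambda_s}{\lambda}r\Lambda R-m_2rR'$.

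\emph{Step 2 (the $\mathcal Y$ and $L^2_B$ bounds).} Estimates \eqref{estimate_on_scalar_prod_R_with_phi}–\eqref{estimate_on_norms_L_2_B_of_R} are then a term-by-term check. Pairing $\mathcal R$ with $\phi\in\mathcal Y$ and integrating the $\partial_y$-terms onto $\phi$, use $|(\Psi_b,\phi)|\lesssim b^2\lesssim s^{-2}$, $|r_s|\lesssim s^{-2}+s^{-1}|\vec m|$, $|r|\lesssim s^{-1}$, $\big|\frac{\lambda_s}{\lambda}\big|=|{-}b+m_1|\lesssim s^{-1}+|\vec m|$, \eqref{estimate_on_r_minus_F} for $(5Q^4(F-r),\phi')$; for the interaction corrections, $Q$, $Q_b^4-Q^4$, $W^4-Q^4$ are dominated by exponentially localized functions, so these are controlled by $(|b|+|r|)$ times $\|F\|_{L^\infty(y>-2|b|^{-\gamma})}\lesssim s^{-1}$ or $\|\partial_yF\|_{L^\infty(y>-2|b|^{-\gamma})}\lesssim s^{-2}$ from \eqref{estimates_on_norms_L_infty_of_F}; $|b_s|$ carries its own term; and $(P_b,\phi)$, $(\Lambda P_b,\phi)$, $(P_b',\phi)$, $(R,\phi)$, $(\Lambda R,\phi)$, $(R',\phi)$ are $O(1)$, the $\chi_b'$-supported corrections being super-exponentially small since $\phi$ decays at $y\sim-|b|^{-\gamma}$. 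Summing gives \eqref{estimate_on_scalar_prod_R_with_phi}. For \eqref{estimate_on_norms_L_2_B_of_R} repeat with the weighted norm: now use $\|\Psi_b^{(k)}\|_{L^2_B}\lesssim b^2$, $\|P_b^{(k)}\|_{L^2_B},\|\Lambda P_b\|_{L^2_B}\lesssim1$, $\|R^{(k)}\|_{L^2_B}\lesssim1$, and the $L^2_B$/$L^\infty$ bounds on $F$ from Lemma \ref{lemma_with_all_estimates_on_r_F_etc}, noting that the weight $e^{y/B}$ ($B>100$) is mild enough not to spoil the exponential decay on the right while being super-exponentially small at $y\sim-|b|^{-\gamma}$; the $\partial_y\mathcal R$ bound follows by differentiating each term once more.

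\emph{Step 3 (the precise $(\mathcal R,Q)$ estimate — the main obstacle).} Pair each term with $Q$, keeping track of every contribution of size $\sim s^{-2}$. The genuinely leading ones are $(b_s\frac{\partial Q_b}{\partial b},Q)=b_s(P,Q)+O(|b_s|e^{-s^\gamma/2})=\frac1{16}(\int Q)^2b_s+O(|b_s|e^{-s^\gamma/2})$ and similarly $(2b^2\frac{\partial Q_b}{\partial b},Q)=\frac1{16}(\int Q)^2\cdot2b^2+O(s^{-4+2\beta})$, using $(P,Q)=\frac1{16}(\int Q)^2$ from \eqref{relation_on_scalar_prod_P_and_Q} and $\chi_b\equiv1$ on $\supp Q$ up to $O(e^{-s^\gamma})$; $(r_sR,Q)=-\frac34r_s\int Q$, into which one substitutes \eqref{estimate_on_r_s}; $(\partial_y(5Q^4(F-r)),Q)=-c_0\theta(\int Q)\lambda^{3/2}\sigma^{-\theta-1}+O(s^{-4+2\beta})$ by \eqref{estimates_on_scalar_prod_r_minus_F_and_Q}, then $\lambda^{3/2}\sigma^{-\theta-1}=\lambda^{1/2}\sigma^{-\theta}\frac{\sigma_s}{\sigma}+O(s^{-1}|\vec m|)$ via $\sigma_s=\lambda(1+m_2)$; and $(\Psi_b,Q)=O(|b|^3)=O(s^{-4+2\beta})$ since $\beta>\frac12$. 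Every remaining term (the interaction corrections, the frozen-modulation pieces, $\frac{\lambda_s}{\lambda}r(\Lambda R,Q)$, $m_2r(R',Q)$) must then be shown to be $O(s^{-4+2\beta}+s^{-1}|\vec m|)$; this is the delicate part, because several such corrections are a priori of the borderline size $br\sim r^2\sim s^{-2}$ (note that $s^{-2}$ is a strictly larger power than $s^{-4+2\beta}$ on the whole range $\frac12<\beta<1$), so one must exploit the parity of $Q$, $Q'$, $R$, $Q_b-Q$ (e.g. $(Q^3R,Q')=0$ since $Q^3RQ'$ is odd), the identities issued from $\mathcal L R=5Q^4$ (equivalently $R+1=\mathcal L^{-1}(1)$) and the closeness $|F-r|\lesssim s^{-2}e^{|y|/2}$, to see that the borderline pieces either vanish or are absorbed by the compensation already built into $rR$. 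Finally, reassemble the leading terms: the $\frac32\frac{\lambda_s}{\lambda}$-term comes from the $\frac12\frac{\lambda_s}{\lambda}$-part of $r_s$ in $(r_sR,Q)$, while the $\theta\frac{\sigma_s}{\sigma}$-term is the sum of the $-\theta\frac{\sigma_s}{\sigma}$-part of $r_s$ (coefficient $+\frac34\theta$ after multiplying by $-\frac34\int Q$) and the contribution of $(\partial_y(5Q^4(F-r)),Q)$ (coefficient $-\theta$), netting $-\frac14\theta$; pulling out $\frac1{16}(\int Q)^2\cdot\frac{4c_0}{\int Q}=\frac{c_0\int Q}{4}$ reproduces exactly the claimed expression, with error $O(|b_s|e^{-s^\gamma/2}+s^{-4+2\beta}+s^{-1}|\vec m|)$. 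The expansion of Step 1 and the soft estimates of Step 2 are essentially bookkeeping; the real difficulty is this exact accounting of all $O(s^{-2})$ terms in $(\mathcal R,Q)$ and the verification that everything else is genuinely $O(s^{-4+2\beta})$.
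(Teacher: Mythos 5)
Your proposal follows essentially the same route as the paper: expand $\mathcal{E}(W)$ using the profile equation for $Q_b$ together with $\mathcal{L}R=5Q^4$ to peel off $-\vec m\cdot\vec M Q$, identify the remainder $\mathcal R$ (which the paper organizes as $\mathcal R_1+\partial_y\mathcal R_2+\mathcal R_3-\Psi_b$), and then estimate term by term, with the $(\mathcal R,Q)$ projection requiring the careful bookkeeping you describe. Your arithmetic in Step 3 (the $-\tfrac34(\int Q)r_s$ from $(R,Q)$ combining with the $-c_0\theta(\int Q)\lambda^{3/2}\sigma^{-\theta-1}$ tail contribution to produce the $\tfrac32\frac{\lambda_s}{\lambda}+\theta\frac{\sigma_s}{\sigma}$ factor with overall coefficient $-\tfrac{c_0\int Q}{4}$) matches the paper exactly, and you correctly flag that the remaining borderline $O(s^{-2})$ terms — $br(\Lambda R,Q)$ and the quadratic pieces $r^2((R+1)R_y,Q^4)$ in $\partial_y\mathcal R_2$ — must be killed by the identities $\Lambda Q+20PQ^3Q'=\mathcal L(P')$, $\mathcal LR=5Q^4$, and parity, which is precisely what the paper's proof carries out.
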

\begin{proof}
From the definitions of $W$, $Q_b$, $\Psi_b$ of $\vec{m}$, $\vec{M}$ and the equation of $R$, the error term $\mathcal{E}(W)$ can be rewritten as
\begin{equation}
    \mathcal{E}(W) = - \vec{m}\cdot\vec{M}\, Q+\mathcal{R}.
\end{equation}
With 
\begin{equation}
    \begin{split}
        &\mathcal{R} = \mathcal{R}_1 + \partial_y \mathcal{R}_2 + \mathcal{R}_3 - \Psi_b,\\
        &\mathcal{R}_1 = r_s \,R + b\,r\, \Lambda R + 5 \partial_y \big[ Q^4_b(rR+F)-Q^4(rR+r) \big],\\
        & \mathcal{R}_2 = \big( Q_b + rR + F \big)^5 - Q^5_b - F^5 - 5 Q^4_b (rR+F),\\
        &\mathcal{R}_3 = (b_s+2b^2)\,\partial_b Q_b - b\, \vec{m}\cdot \vec{M} P_b - r \,\vec{m}\cdot \vec{M}R.
    \end{split}
\end{equation}

Let $\phi \in \mathcal{Y}$, we will estimate its scalar product with $\mathcal{R}$.

Using the estimates \eqref{estimate_on_r_s}, \eqref{estimate_on_r_minus_F}, \eqref{estimates_on_norms_L_infty_of_F_with_exp} from Lemma \ref{lemma_with_all_estimates_on_r_F_etc}, also \eqref{BS1}, interpolation and the properties of $Q_b$ and of functions in $\mathcal{Y}$, we get
\begin{equation}
    \begin{split}
        \big\lvert \big(\mathcal{R}_1, \phi \big) \big\rvert  
        & \lesssim |r_s|\int |R \phi| + |br|\int|\Lambda R \phi| + \int Q^4 |F-r||\phi' | + \int |Q^4_b - Q^4| |rR+F||\phi'|\\
        & \lesssim s^{-2}+ s^{-1}|\vec{m}| + |b|\int Q^3|P_b(rR+F)\phi' | + |b|^4\int |P^4_b(rR+F)\phi'|\\
        & \lesssim s^{-2}+ s^{-1}|\vec{m}|.
    \end{split}
\end{equation}
Proceeding similarly and using the following rewriting of $\mathcal{R}_2$
\begin{equation}\label{estimate_R_2_in_proof_of_error_term}
    \mathcal{R}_2 = 10\,Q^3(rR+F)^2 + 10\,(Q^3_b - Q^3)(rR+F)^2 + 10\,Q^2_b(rR+F)^3+ 5\,Q_b (rR+F)^4 + (rR+F)^5 - F^5,
\end{equation}
yields the following estimate
\begin{equation}\label{estimate_1_in_proof_lemma_error_term}
\begin{split}
    \big\lvert \big(\partial_y \mathcal{R}_2, \phi \big) \big\rvert \lesssim  s^{-2}.
\end{split}
\end{equation}

It remains to estimate the term $\mathcal{R}_3$. Since $\partial_b \, Q_b = P_b + \gamma \, y \, (\chi_b)' P$, yields
\begin{equation}
    |b_s + 2b^2| \big\lvert \big( \partial_b Q_b, \phi \big) \big\rvert \lesssim |b_s + 2b^2| \Big( \int |P_b \phi|+ \gamma \int |y \,\phi \, (\chi_b)' P| \Big) \lesssim |b_s|+s^{-2}
\end{equation}
and we also have
\begin{equation}
    |b|\big\lvert \big( \vec{m}\cdot\vec{M}P_b, \phi \big)\big\rvert + |r|\big\lvert \big( \vec{m}\cdot\vec{M}R, \phi \big) \big\rvert \lesssim s^{-1}|\vec{m}|.
\end{equation}

Combining those estimate yields the announced estimate \eqref{estimate_on_scalar_prod_R_with_phi}.

\vspace{0.4cm}
From decay properties of $Q$, $Q_b$ and of $P$ on the right, the $\supp(\chi_b)$, and the \eqref{BS1}, we get
\begin{equation}
    \|\mathcal{R}_1\|_{L^2_B} + \|\partial_y \mathcal{R}_1\|_{L^2_B} + \|\partial_y \mathcal{R}_2\|_{L^2_B} + \|\partial^2_y \mathcal{R}_2\|_{L^2_B} \lesssim s^{-2}
\end{equation}
and 
\begin{equation}
    \| \mathcal{R}_3 \|_{L^2_{B}} + \| \partial_y \mathcal{R}_3 \|_{L^2_{B}} \lesssim |b_s| + s^{-1}|\vec{m}| + s^{-2}.
\end{equation}
Thus, yields the estimate \eqref{estimate_on_norms_L_2_B_of_R}.

\vspace{0.4cm}
By \eqref{properties_of_R} and by the estimate \eqref{estimate_on_r_s} on $r_s$, we get 
\begin{equation}
    \Big\lvert r_s\big( R,Q\big) + \frac{3}{4}\Big(\int Q\Big)\, c_0 \lambda^{\frac{1}{2}}\sigma^{-\theta}\Big( \frac{1}{2}\frac{\lambda_s}{\lambda} - \theta \frac{\sigma_s}{\sigma} \Big) \Big\rvert \lesssim s^{-4+2\beta}+s^{-3+2\beta}|\vec{m}|.
\end{equation}

The following relation holds true
\begin{equation}
    \Lambda Q + 20PQ^3Q' = (\mathcal{L}P)' + 20Q^3Q'P = \mathcal{L}(P').
\end{equation}
Thus, using the propriety $\mathcal{L}R = 5Q^4$, we write
\begin{equation}\label{estimate_on_br_scalar_prod_in_proof_scalar_prod_of_R_Q}
    \begin{split}
        br \big( \Lambda R, Q\big) = - br \big( R, \Lambda Q \big) = -br\big( R, \mathcal{L}(P') \big) + 20br \big( R, PQ^3Q' \big) = 20br\big( Q^3 P (R+1), Q' \big).
    \end{split}
\end{equation}

The following rewriting holds
\begin{equation}
    \begin{split}
        5 \big( \partial_y\big[ Q^4_b(rR+F)-Q^4(rR+r) \big], Q \big) = -5\big( (Q^4_b - Q^4)(rR+F), Q' \big) - 5\big( Q^4(F-r), Q' \big).
    \end{split}
\end{equation}

By \eqref{estimate_on_r_minus_F} and \eqref{BS1}, the first term in the rewriting can be estimated in the following way
\begin{equation}
\begin{split}
    -5\big( (Q^4_b - Q^4)(rR+F), Q' \big) 
    & = -5r\big( (Q^4_b - Q^4 )(R+1),Q' \big) - 5r \big( (Q^4_b - Q^4)(F-r),Q' \big)\\
    & \lesssim -20rb \big( Q^3P_b(R+1), Q' \big) + s^{-3}.
\end{split}
\end{equation}

And the first term in the estimate can be combined with \eqref{estimate_on_br_scalar_prod_in_proof_scalar_prod_of_R_Q} and the proprieties of $1-\chi_b$
\begin{equation}
    \big| br\big( \Lambda R, Q \big) - 20 br \big( Q^3(R+1)P_bQ' \big)\big| \lesssim s^{-3}.
\end{equation}

From \eqref{estimates_on_scalar_prod_r_minus_F_and_Q}, $|\lambda- \sigma_s| \leq \lambda |\vec{m}|$ and \eqref{BS1}, we estimate the second term on the right-hand side
\begin{equation}
    \bigg\lvert - 5\big( Q^4(F-r), Q' \big) + c_0 \theta \Big( \int Q\Big) \lambda^{\frac{1}{2}}\sigma^{-\theta-1}\sigma_s \bigg\rvert \lesssim s^{-4+2\beta} + \lambda^{\frac{1}{2}}\sigma^{-\theta-1}|\lambda - \sigma_s|\lesssim s^{-4+2\beta}+s^{-2}|\vec{m}|.
\end{equation}

Since $\beta>\frac{1}{2}$, we conclude the estimate for $\mathcal{R}_1$
\begin{equation}
\begin{split}
    & \bigg\lvert \big( \mathcal{R}_1,Q\big) +\frac{3}{4}\Big(\int Q\Big)\, c_0 \lambda^{\frac{1}{2}}\sigma^{-\theta}\Big( \frac{1}{2}\frac{\lambda_s}{\lambda} - \theta \frac{\sigma_s}{\sigma}\Big) + c_0 \theta \Big( \int Q\Big) \lambda^{\frac{1}{2}}\sigma^{-\theta-1}\sigma_s \bigg\rvert\\
    & =
    \bigg\lvert \big( \mathcal{R}_1,Q\big) + \frac{1}{4}c_0 \Big( \int Q\Big) \lambda^{\frac{1}{2}} \sigma^{-\theta} \Big(\frac{3}{2}\frac{\lambda_s}{\lambda} + \theta \frac{\sigma_s}{\sigma} \Big) \bigg\rvert \lesssim s^{-4+2\beta} + s^{-3+2\beta}|\vec{m}|.
\end{split}
\end{equation}

We show now that 
\begin{equation}
    \big\lvert \big( \partial_y \mathcal{R}_2, Q \big) \big\rvert \lesssim s^{-3}.
\end{equation}
From the rewriting \eqref{estimate_R_2_in_proof_of_error_term} and \eqref{estimates_on_norms_L_infty_of_F_with_exp}, \eqref{estimates_on_r}, we get
\begin{equation}
    \big\lvert \big( \partial_y \mathcal{R}_2, Q \big) \big\rvert \lesssim \big|10 \big( Q^3(rR+F)^2, Q' \big) \big| + s^{-3}.
\end{equation}
We write then
\begin{equation}
    10\big( Q^3(rR+F)^2, Q' \big) = -5r^2\big( (R+1)\,\partial_y R\,, Q^4 \big) -5r \big( (F-r)\,\partial_y R\,, Q^4 \big)-5r \big( R\, \partial_y F\,, Q^4 \big) - 5 \big( F\, \partial_y F\,, Q^4 \big).
\end{equation}
By the propriety $\mathcal{L}R = 5Q^4$, we have
\begin{equation}
    -5r^2 \big( (R+1)\,\partial_y R\, , Q^4 \big) = -r^2 \, \big( R - R^2_y\, , R_y \big) = 0.
\end{equation}
Combining with the estimates \eqref{estimates_on_r}, \eqref{estimate_on_r_minus_F}, \eqref{estimates_on_norms_L_infty_of_F_with_exp}, we get
\begin{equation}
    \big\lvert10\big( Q^3(rR+F)^2, Q' \big)\big\rvert \lesssim s^{-3}.
\end{equation}
This concludes the estimate on the term $\mathcal{R}_2$.

We remark that by \eqref{relation_on_scalar_prod_P_and_Q}, the definition of $\chi_b$ and the decay of $Q$, we have
\begin{equation}
\begin{split}
    & (b_s+2b^2) \bigg\lvert \big(\partial_bQ_b,P \big) -\frac{1}{16} \Big( \int Q\Big)^2 \bigg\rvert\\
    & \lesssim (b_s +2b^2)\bigg\lvert \big(Q,P\big) + \big((\chi_b-1)P,Q \big) + \gamma\big(  y P \chi'_b, Q\big) -\frac{1}{16} \Big( \int Q\Big)^2 \bigg\rvert\\
    & \lesssim (b_s + b^2)e^{-\frac{s^{\gamma}}{2}} \lesssim |b_s|e^{-\frac{s^{\gamma}}{2}} + s^{-3}.
\end{split}
\end{equation}
Thus, we conclude for the term $\mathcal{R}_3$
\begin{equation}
    \bigg\lvert \big( \mathcal{R}_3, Q \big) - \frac{1}{16}\Big(\int Q \Big)^2 (b_s + 2b^2) \bigg\rvert \lesssim  |b_s|e^{-\frac{s^{\gamma}}{2}}+ s^{-3} + s^{-1}|\vec{m}|.
\end{equation}
Combining the estimates on the projections of the terms $\mathcal{R}_1$,  $\partial_y\mathcal{R}_2$ and  $\mathcal{R}_3$ on $Q$, and since $\frac{1}{2}<\beta<1$ yields the estimate \eqref{estimate_on_scalar_prod_R_Q}.
\end{proof}

\begin{lemma}[Estimates on $h_s$ and $g_s$]\label{lemma_with_estimates_on_hs_and_gs}
For $s \geq s_0 $, it holds
\begin{equation}\label{estimate_on_gs_with_prod_scal_R_Q}
    \big\lvert c_1 \lambda^2 g_s - \big(\mathcal{R},Q\big) \big\rvert \lesssim |b_s|e^{-\frac{s^{\gamma}}{2}}+s^{-4+2\beta} + s^{-1}|\vec{m}|,
\end{equation}
\begin{equation}\label{estimate_on_hs_with_g}
    \big\lvert \lambda^{-\frac{1}{2}}h_s + \frac{1}{2}\lambda^2 g \big\rvert \lesssim |\vec{m}|.
\end{equation}
\end{lemma}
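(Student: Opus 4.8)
The plan is to obtain both estimates as essentially \emph{exact} algebraic identities, with the right-hand sides coming entirely from the error term in the projection estimate \eqref{estimate_on_scalar_prod_R_Q} together with the a priori bound $|b|\lesssim s^{-1}$ from \eqref{BS1}. Throughout I write $m_1=\frac{\lambda_s}{\lambda}+b$ and $m_2=\frac{\sigma_s}{\lambda}-1$ for the two components of $\vec m$, and recall that $c_1$ is the constant $\frac{1}{16}\big(\int Q\big)^2$ appearing as the prefactor of the bracket in \eqref{estimate_on_scalar_prod_R_Q}.

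For \eqref{estimate_on_hs_with_g} I would simply differentiate the definition \eqref{definition-g-h} of $h$ in $s$, obtaining $h_s=\frac12\lambda^{-\frac12}\lambda_s-\frac{2c_0}{\int Q}\sigma^{-\theta}\sigma_s$, and combine it with $\frac12\lambda^2 g=\frac{b}{2}+\frac{2c_0}{\int Q}\lambda^{\frac12}\sigma^{-\theta}$, which gives
\[
\lambda^{-\frac12}h_s+\tfrac12\lambda^2 g=\tfrac12\Big(\tfrac{\lambda_s}{\lambda}+b\Big)+\frac{2c_0}{\int Q}\,\lambda^{\frac12}\sigma^{-\theta}\Big(1-\tfrac{\sigma_s}{\lambda}\Big)=\tfrac12 m_1-\frac{2c_0}{\int Q}\,\lambda^{\frac12}\sigma^{-\theta}\,m_2.
\]
It then remains to note that the weight $\lambda^{\frac12}\sigma^{-\theta}$ is bounded: by \eqref{BS1}, $\lambda\approx s^{-\beta}$ and $\sigma\approx\frac{1}{1-\beta}s^{1-\beta}$, and since $\theta(1-\beta)=1-\frac\beta2$ this yields $\lambda^{\frac12}\sigma^{-\theta}\approx s^{-1}\lesssim 1$. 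Hence the whole expression is $\lesssim|m_1|+|m_2|\lesssim|\vec m|$.

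For \eqref{estimate_on_gs_with_prod_scal_R_Q} I would first differentiate $g$ from \eqref{definition-g-h} and multiply by $\lambda^2$, which produces the exact identity
\[
\lambda^2 g_s=b_s-\frac{2b\lambda_s}{\lambda}-\frac{4c_0}{\int Q}\,\lambda^{\frac12}\sigma^{-\theta}\Big(\tfrac32\tfrac{\lambda_s}{\lambda}+\theta\tfrac{\sigma_s}{\sigma}\Big).
\]
Multiplying by $c_1$ and subtracting $(\mathcal R,Q)$, replacing the latter by the main term of \eqref{estimate_on_scalar_prod_R_Q} up to its error $|b_s|e^{-s^\gamma/2}+s^{-4+2\beta}+s^{-1}|\vec m|$, all of the $b_s$–terms and all of the $\lambda^{\frac12}\sigma^{-\theta}$–weighted scaling terms cancel term-by-term, leaving only the residue $c_1\big(-\frac{2b\lambda_s}{\lambda}-2b^2\big)=-2c_1 b\,m_1$. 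Since $|b|\lesssim s^{-1}$ by \eqref{BS1}, this residue is $\lesssim s^{-1}|\vec m|$, which is already contained in the right-hand side of \eqref{estimate_on_gs_with_prod_scal_R_Q}, closing the estimate.

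The computations are elementary, so I do not expect a genuine analytic obstacle; the one point requiring care is the bookkeeping in the second estimate, namely checking that the $\lambda^{\frac12}\sigma^{-\theta}$–weighted scaling terms generated by differentiating $g$ match \emph{exactly} those in \eqref{estimate_on_scalar_prod_R_Q} (so that nothing of size $s^{-3+2\beta}$ or larger survives the cancellation) and that the leftover is precisely the quadratic term $-2c_1 b\,m_1$, which the a priori control on $b$ absorbs into $s^{-1}|\vec m|$.
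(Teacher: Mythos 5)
Your proposal is correct and follows essentially the same route as the paper: differentiate the explicit formulas for $g$ and $h$ from \eqref{definition-g-h}, observe that $\lambda^2 g_s$ reproduces the bracket in \eqref{estimate_on_scalar_prod_R_Q} up to a residue of the form $-2c_1 b\,m_1$, absorb that residue into $s^{-1}|\vec m|$ via $|b|\lesssim s^{-1}$ from \eqref{BS1}, and for the second estimate use the exact identity $\lambda^{-1/2}h_s+\tfrac12\lambda^2 g=\tfrac12 m_1-\frac{2c_0}{\int Q}\lambda^{1/2}\sigma^{-\theta}m_2$ together with the bound $\lambda^{1/2}\sigma^{-\theta}\lesssim 1$. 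The paper's own proof writes $g_s$ with the $-2b\lambda_s/\lambda$ term expanded as $+2b^2/\lambda^2-2b m_1/\lambda^2$, which makes the cancellation with \eqref{estimate_on_scalar_prod_R_Q} manifest without a separate bookkeeping step, but the computation and the final bounds are the same.
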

\begin{proof}
    We compute
    \begin{equation}
        \frac{d}{ds}g(s) = \frac{b_s}{\lambda^2}+2\frac{b^2}{\lambda^2}-2\frac{b}{\lambda^2}\Big( \frac{\lambda_s}{\lambda}+b \Big) - \frac{4\,c_0}{\int Q} \lambda^{-\frac{3}{2}}\sigma^{-\theta}\bigg( \frac{3}{2}\frac{\lambda_s}{\lambda}+\theta\frac{\sigma_s}{\sigma} \bigg).
    \end{equation}
    The estimates \eqref{estimate_on_scalar_prod_R_Q} and \eqref{BS1} yields
    \begin{equation}
        \big\lvert c_1\,\lambda^2 g_s - \big(\mathcal{R},Q \big) \big\rvert \lesssim |b_s|e^{-\frac{s^{\gamma}}{2}}+s^{-4+2\beta} + s^{-1}|\vec{m}|.
    \end{equation}
    For the second estimate, by \eqref{BS1}, we get
    \begin{equation}
        \big\lvert \lambda^{-\frac{1}{2}}h_s + \frac{1}{2}\lambda^2 g \big\rvert = \Big\lvert \frac{1}{2}\Big(\frac{\lambda_s}{\lambda} + b\Big) - \frac{2 \, c_0}{\int Q} \lambda^{\frac{1}{2}}\sigma^{-\theta} \Big( \frac{\sigma_s}{\lambda}-1 \Big)\Big\rvert \lesssim |\vec{m}|+s^{-1}|\vec{m}|\lesssim | \vec{m}|.
    \end{equation}
\end{proof}

\subsection{Modulation and parameters estimates}
Let $U $ be a solution of \eqref{gKdV_principal_eq} defined on the time interval $I $ included in $[0,T)$. We denote $s(I)$ the rescaled time interval following the definition introduced in \eqref{def_of_rescaled_time}.

Consider $f$ defined in \eqref{equation_of_f} and set
\begin{equation}
    v(t,x) = U(t,x) - f(t,x).
\end{equation}
We say \textit{$v$ is close to the approximate blow-up profile}, if for all $t \in I$ there exist $(\lambda_{\sharp}(t), \sigma_{\sharp}(t))\in (0,+\infty)^2$ and $z \in C(I,L^2(\RR))$ such that
\begin{equation}\label{supposed_decomposition_around_Q}
\begin{split}
    v(t,x)=\lambda^{-\frac{1}{2}}_{\sharp}(t)(Q+r_{\sharp}R+z)(t,y),\\
     y= \frac{x-\sigma_{\sharp}(t)}{\lambda_{\sharp}(t)},\qquad  r_{\sharp(t)}= \lambda^{\frac{1}{2}}_{\sharp}(t)f(t,\sigma_{\sharp}(t)).
\end{split}
\end{equation}
with for all $t \in I$ and for small enough universal constant $\alpha^*$
\begin{equation}\label{supposed_decomposition_around_Q_condition}
    \|z(t)\|_{H^1}+ \sigma_{\sharp}^{-1}(t)+\lambda_{\sharp}(t ) \leq \alpha^{*}.\\
\end{equation}

\begin{lemma}\label{lemma_on_decomposition_around_Q}
    Assume $v$ is close to the approximate blow-up profile for some $\alpha^{*}$ sufficiently small. There exist unique continuous functions $(\lambda,\sigma,b):I\mapsto (0,+\infty) \times \RR^2$ such that
    \begin{equation}\label{decomposition_of_v_in_lemma}
        \lambda^{\frac{1}{2}}(t)v(t, \lambda(t)y+\sigma(t)) = W(t,y)+ \vare(t,y)
    \end{equation}
    where $W$ is defined in \eqref{def_of_W_and_of_r} and  $\vare$ satisfies for all $t\in I$
    \begin{equation}\label{orthogonal_conditions_in_lemma}
        (\vare(t),y\Lambda Q ) = (\vare(t),\Lambda Q) = (\vare(t), Q) = 0.
    \end{equation}
    Moreover,
    \begin{equation}
        \bigg\lvert \frac{\lambda(t)}{\lambda_{\sharp}(t)} - 1\bigg\rvert + |b(t)| + \bigg\lvert \frac{\sigma(t)-\sigma_{\sharp}(t)}{\lambda_{\sharp}(t)} \bigg\rvert \lesssim \|z(t)\|_{L^2_{sol}}
    \end{equation}
    and
    \begin{equation}\label{estimates_on_norms_of_vares_in_z_in_decomp}
        \|\vare(t)\|_{L^2} \lesssim \|z(t)\|^{\frac 58}_{L^2}, \qquad \|\vare(t)\|_{L^2_{sol}} \lesssim \|z(t)\|_{L^2_{sol}},
    \end{equation}
    \begin{equation}\label{estimates_on_norms_of_partial_vares_in_z_in_decomp}
        \|\partial_y \vare(t)\|_{L^2} \lesssim \|z(t)\|_{L^2_{sol}} + \|\partial_y z(t)\|_{L^2},  \qquad \|\partial_y \vare(t)\|_{L^2_{sol}} \lesssim \|z(t)\|_{L^2_{sol}}+\|\partial_y z(t)\|_{L^2_{sol}}.
    \end{equation}
\end{lemma}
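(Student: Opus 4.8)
The plan is to build the decomposition, at each fixed $t\in I$, from the implicit function theorem applied to a normalized version of $v$, and then to read off the estimates by a change of variables. Normalizing by the given parameters, write $\tilde v(y)=\lambda_\sharp^{1/2}(t)\,v\big(t,\lambda_\sharp(t)y+\sigma_\sharp(t)\big)=Q+r_\sharp R+z(t,y)$, and look for $(\mu,\eta,b)$ near $(1,0,0)$ such that
\[
\varepsilon(y):=\mu^{1/2}\tilde v(\mu y+\eta)-Q_b(y)-\tilde r\,R(y),\qquad \tilde r:=\mu^{1/2}\lambda_\sharp^{1/2}f\big(t,\sigma_\sharp+\lambda_\sharp\eta\big),
\]
satisfies the three orthogonality conditions; setting $\lambda=\mu\lambda_\sharp$ and $\sigma=\sigma_\sharp+\lambda_\sharp\eta$ then gives $\lambda^{1/2}v(t,\lambda y+\sigma)=Q_b+\tilde r R+\varepsilon$ with $\tilde r=r(t)$, i.e.\ exactly \eqref{decomposition_of_v_in_lemma}. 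The natural map is
\[
H(\mu,\eta,b)=\big((\varepsilon,Q),\ (\varepsilon,\Lambda Q),\ (\varepsilon,y\Lambda Q)\big),
\]
which, since $r_\sharp=\lambda_\sharp^{1/2}f(t,\sigma_\sharp)$ and $Q_0=Q$, satisfies $H(1,0,0)=\big((z,Q),(z,\Lambda Q),(z,y\Lambda Q)\big)$, a quantity of size $\lesssim\|z(t)\|_{L^2_{sol}}$ because $Q,\Lambda Q,y\Lambda Q\in\mathcal Y$ are exponentially localized. The map $H$ is $C^1$ near $(1,0,0)$ (using $z(t)\in H^1$); although $b\mapsto Q_b$ is not differentiable into $L^2$ at $b=0$ (since $P\notin L^2$), the scalar products $b\mapsto(Q_b,\phi)$ are $C^1$ for each $\phi\in\mathcal Y$, with $\partial_b(Q_b,\phi)|_{b=0}=(P,\phi)$, because $P_b=P\,\chi(|b|^\gamma\,\cdot\,)$ agrees with $P$ outside a region $y\lesssim-|b|^{-\gamma}$ on which $\phi$ is exponentially small.

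The key point is the invertibility of $\partial_{(\mu,\eta,b)}H$ at $(1,0,0)$ when $z=0$. There one computes $\partial_\mu\varepsilon=\Lambda Q+O(|r_\sharp|)$, $\partial_\eta\varepsilon=Q'+O(|r_\sharp|)$, and $\partial_b(\varepsilon,\phi)=-(P,\phi)$, so the differential equals, up to an $O(|r_\sharp|)$ perturbation, the matrix $\big((\Lambda Q,\phi_i),\,(Q',\phi_i),\,-(P,\phi_i)\big)_i$ with $\phi_1=Q$, $\phi_2=\Lambda Q$, $\phi_3=y\Lambda Q$. By parity of $Q$ and the recorded identities $(\Lambda Q,Q)=0$, $(Q',Q)=\tfrac12\int(Q^2)'=0$, $(Q',\Lambda Q)=0$, $(\Lambda Q,y\Lambda Q)=0$ (odd integrands), $(P,Q)=\tfrac1{16}(\int Q)^2\neq0$, $\|\Lambda Q\|_{L^2}^2>0$, and $(Q',y\Lambda Q)=\int y^2(Q')^2-\tfrac14\int Q^2\neq0$, the first row is $(0,0,-(P,Q))$ — so the $Q$-orthogonality couples only to the $b$-direction — and the complementary $2\times2$ block in the $(\mu,\eta)$-variables is diagonal with nonzero diagonal; hence the differential is invertible. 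For $\alpha^*$ small enough (so that $r_\sharp$ is also small), the implicit function theorem yields a unique $(\mu,\eta,b)$ in a fixed neighborhood of $(1,0,0)$ with $H=0$, depending continuously on the data, with $|\mu-1|+|\eta|+|b|\lesssim\|H(1,0,0)\|\lesssim\|z(t)\|_{L^2_{sol}}$, which is the claimed bound on $|\lambda/\lambda_\sharp-1|+|b|+|(\sigma-\sigma_\sharp)/\lambda_\sharp|$. Continuity of $(\lambda,\sigma,b)$ in $t$ follows from continuity of $t\mapsto(v(t),f(t,\cdot))$ in $H^1\times H^1$ (since $U,f\in C(I,H^1)$) and the uniform size of the neighborhood in the implicit function theorem; uniqueness within the class of continuous decompositions with parameters in this neighborhood is the local uniqueness of the implicit function theorem, propagated along $I$.

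It remains to estimate $\varepsilon$. Writing $\varepsilon=\mu^{1/2}z(\mu\,\cdot\,+\eta)+\mathcal G$ with the geometric error $\mathcal G=\mu^{1/2}(Q+r_\sharp R)(\mu\,\cdot\,+\eta)-Q_b-\tilde r R$, the first term is an $L^2$-isometry of $z$ (so $\|\mu^{1/2}z(\mu\,\cdot\,+\eta)\|_{L^2}=\|z\|_{L^2}$), and $\mathcal G$ is estimated through its pieces $\mu^{1/2}Q(\mu\,\cdot\,+\eta)-Q$, $bP_b$, and the $r_\sharp$-terms. The only piece not controlled linearly by the parameters is $bP_b$: since $\chi_b$ extends $P_b$ over a region of length $\sim|b|^{-\gamma}$ on which $P$ is of order one, $\|bP_b\|_{L^2}\lesssim|b|^{1-\gamma/2}=|b|^{5/8}$ (recall $\gamma=\tfrac34$), whence the non-optimal exponent $\|\varepsilon(t)\|_{L^2}\lesssim\|z(t)\|_{L^2_{sol}}^{5/8}\lesssim\|z(t)\|_{L^2}^{5/8}$. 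In the weighted space the exponential weight makes every contribution linear: $\|bP_b\|_{L^2_{sol}}\lesssim|b|$ and $\|\mu^{1/2}z(\mu\,\cdot\,+\eta)\|_{L^2_{sol}}\lesssim\|z\|_{L^2_{sol}}$ (as $\mu\approx1$ and $\eta$ small), giving $\|\varepsilon(t)\|_{L^2_{sol}}\lesssim\|z(t)\|_{L^2_{sol}}$. Differentiating the same decomposition, and using $\partial_y(bP_b)=bP'\chi_b+\gamma\,b|b|^\gamma P\,\chi'(|b|^\gamma\,\cdot\,)$ with $P'\in\mathcal Y$, yields the two bounds on $\partial_y\varepsilon$. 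The main obstacle is the Jacobian computation — checking, via parity and the algebraic identities for $Q$ and $P$, that the orthogonality directions are transverse to the parameter directions; everything else is change-of-variables bookkeeping, routine once the parameter estimates are at hand.
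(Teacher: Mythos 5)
Your proposal is correct and follows essentially the same route as the paper: normalize by the given parameters $(\lambda_\sharp,\sigma_\sharp)$, apply the implicit function theorem to the map formed by the three orthogonality pairings, exploit the block-triangular structure of the Jacobian at $(1,0,0)$ (the $Q$-pairing couples only to $b$ via $-(P,Q)\neq 0$, while the $(\mu,\eta)$ block is $\|\Lambda Q\|^2$ times the identity, off-diagonal), and then read off the $\vare$-estimates from the decomposition $\vare = \mu^{1/2}z(\mu\cdot+\eta)+\mathcal G$, with the loss $\|bP_b\|_{L^2}\lesssim|b|^{1-\gamma/2}=|b|^{5/8}$ explaining the exponent $5/8$. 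The one presentational difference is that you obtain the parameter bound $|\mu-1|+|\eta|+|b|\lesssim\|z\|_{L^2_{sol}}$ directly from the quantitative version of the implicit function theorem (via $\|H(1,0,0)\|\lesssim\|z\|_{L^2_{sol}}$), whereas the paper derives it more explicitly by projecting the Taylor expansion of $\bar\lambda_1^{1/2}Q(\bar\lambda_1 y + \bar\sigma_1)-Q$ onto the three directions and closing a small fixed-point inequality; both are valid, and your version requires (as you note) uniform $C^1$ control of the map, which holds here. Your remark that $b\mapsto Q_b$ fails to be $L^2$-differentiable at $b=0$ yet $b\mapsto(Q_b,\phi)$ is $C^1$ for $\phi\in\mathcal Y$ is a helpful clarification that the paper leaves implicit.
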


\begin{proof}
    The decomposition is performed first for a fixed $t \in I$. Define a map
    \begin{equation}
        \Theta: (\bar{\lambda}, \bar{\sigma}, \bar{b}, v_1)\in (0,+\infty)\times \RR^2\times H^1 \mapsto \big((\bar{\vare},y\Lambda Q), (\bar{\vare}, \Lambda Q), (\bar{\vare}, Q) \big) \in \RR^3.
    \end{equation}
Where $\bar{\vare}$ is defined by
\begin{equation}
    \begin{split}
        \bar{\vare}(y)= \bar{\vare}_{(\bar{\lambda}, \bar{\sigma}, \bar{b}, v_1)}(y):= \bar{\lambda}^{\frac{1}{2}}v_1(t, \bar{\lambda }y + \bar{\sigma}) - Q_{\bar{b}}(y)-\bar{\lambda}^{\frac{1}{2}} \lambda_{\sharp}^{\frac{1}{2}}(t)f(t,\sigma_{\sharp}(t)+\lambda_{\sharp}(t)\bar{\sigma})R(y).
    \end{split}
\end{equation}
Consider $v_{\sharp}= Q+r_{\sharp}R$ and $\theta_0 = (1,0,0,v_{\sharp})$. We remark that $\bar{\vare}_{\theta_0}= 0$, thus $\Theta(\theta_0)=0$.

We compute the partial derivatives of $\bar{\vare}$ on $\theta_0$
\begin{equation}
    \partial_{\bar{\lambda}} \bar{\vare}\vert_{\theta_0} = \Lambda Q + r_{\sharp}y R',
\end{equation}
\begin{equation}
    \partial_{\bar{\sigma}} \bar{\vare}\vert_{\theta_0} = Q'+ r_{\sharp}R' - \lambda^{\frac{3}{2}}_{\sharp} \partial_x f(t,\sigma_{\sharp})R,
\end{equation}
\begin{equation}
    \partial_{\bar{b}} \bar{\vare}\vert_{\theta_0} = -P.
\end{equation}

Therefore, the Jacobian matrix of $\Theta$ on $\theta_0$ is given by
\begin{equation}
        \Jac_{\Theta}(\theta_0) = 
        \begin{pmatrix}
(\Lambda Q, \Lambda Q) & (Q', y\Lambda Q) + r_{\sharp}(R', y\Lambda Q) & -(P,y\Lambda Q)\\
\|\Lambda Q\|^2_{L^2}+r_{\sharp}(yR',\Lambda Q) & (Q', \Lambda Q)- \lambda^{\frac{3}{2}}\partial_x f(t, \sigma_{\sharp})(R, \Lambda Q) & -(P,\Lambda Q)\\
(\Lambda Q, Q)+ r_{\sharp}(y R', Q) & (Q',Q)- \lambda^{\frac{3}{2}}_{\sharp} \partial_x f(t, \sigma_{\sharp})(R,Q) & -(P,Q)
\end{pmatrix}.
\end{equation}

Assume $\alpha^*$ small enough such that by \eqref{supposed_decomposition_around_Q_condition}$,  \sigma_{\sharp}> \frac{t_0}{2}+ \frac{x_0}{2}$. By Lemma \ref{lemma_on_the_decaying_tail}, we get
\begin{equation}
    \lvert \lambda^{\frac{3}{2}}_{\sharp} \partial_x f(t, \sigma_{\sharp}) \rvert + |r_{\sharp}| \lesssim (\alpha^*)^{\frac{1}{2}+\theta} \lesssim (\alpha^*)^{\frac{1}{2}}.
\end{equation}
By symmetry and by explicit computations, we get
\begin{equation}
    (\Lambda Q, y \Lambda Q ) = (Q', \Lambda Q) = (Q',Q) = (\Lambda Q, Q) = 0 \qquad \text{and} \qquad 
    (Q', y \Lambda Q) = \|\Lambda Q \|^2_{L^2}.
\end{equation}
Using \eqref{relation_on_scalar_prod_P_and_Q} and previous estimates, we obtain
\begin{equation}
    \Jac_{\Theta}(\theta_0) = 
    \begin{pmatrix}
0 & \|\Lambda Q\|^2_{L^2} & -(P,y\Lambda Q)\\ 
\|\Lambda Q\|^2_{L^2} & 0 & -(P,\Lambda Q)\\ 
0 & 0 & -\frac{1}{16}\|Q\|^2_{L^1}
\end{pmatrix} + O\Big( (\alpha^*)^{\frac{1}{2}} \Big).
\end{equation}

Thus for $\alpha^*>0$ small enough, the Jacobian determinant satisfies
\begin{equation}
    \det{\Jac_{\Theta}(\theta_0)} = \frac{1}{16}\|\Lambda Q \|^4_{L^2}\|Q\|^2_{L^1} + O\Big( (\alpha^*)^{\frac{1 }{2}} \Big) >  \frac{1}{32}\|\Lambda Q \|^4_{L^2}\|Q\|^2_{L^1} > 0.
\end{equation}

Therefore, taking $\alpha^*>0$ small enough, by the implicit function theorem, for any $v_1 = v_{\sharp} + z = Q+r_{\sharp}R + z$ with $\|z\|_{H^1}< \alpha^*$, there exists a unique $(\bar{\lambda}, \bar{\sigma}, \bar{b})(v_1)$ close to $(1,0,0)$, such that $\Theta(\bar{\lambda}, \bar{\sigma}, \bar{b},v_1) =0$. Moreover, the map $v_1 \mapsto (\bar{\lambda}, \bar{\sigma}, \bar{b})(v_1)$ is continuous. 

\vspace{0.3cm}
Let $v$ be \textit{close to the approximate blow-up profile}, and assume $\alpha^*>0$ small enough, such that $\|z(t) \|_{H^1} \leq \alpha^*$, so that $v_1(t,y):=\lambda^{\frac{1}{2}}_{\sharp}(t)v(t,\lambda_{\sharp}(t)y + \sigma_{\sharp}(t))$ is close to $v_{\sharp}(t)$ in $H^1$. Define $(\bar{\lambda}_1,\bar{\sigma}_1,\bar{b}_1)(t) := (\bar{\lambda},\bar{\sigma},\bar{b})(v_1(t))$ and $\bar{\vare}_1(t,y):= \bar{\vare}_{(\bar{\lambda}_1,\bar{\sigma}_1,\bar{b}_1, v_1)(t)}(y)$.
We consider
\begin{equation}
    \lambda(s(t)):=\lambda_{\sharp}(t)\bar{\lambda}_1(t), \qquad \sigma(s(t)):=\lambda_{\sharp}(t)\bar{\sigma}_1(t) + \sigma_{\sharp}(t), \qquad b(s(t)):=\bar{b}_1(t), \qquad \vare(s(t),y):= \bar{\vare}_1(t,y).
\end{equation}
In particular, $\vare$ satisfies the orthogonality conditions \eqref{orthogonal_conditions_in_lemma} and the desired decomposition \eqref{decomposition_of_v_in_lemma} of $v$ holds true.

\vspace{0.4cm}
The equation of $\vare$ can be rewritten as
\begin{equation}\label{equation_of_vare_inside_of_proof_of_decomposition}
    \vare(y) = \bar{\lambda}^{\frac{1}{2}}_1 Q(\bar{\lambda}_1 y + \bar{\sigma}_1)- Q_{\bar{b}_1}(y)+ \bar{\lambda}^{\frac{1}{2}}_1 \lambda^{\frac{1}{2}}_{\sharp}\Big[ f(\sigma_{\sharp})R(\bar{\lambda}_1 y + \bar{\sigma}_1) - f(\sigma_{\sharp}+\lambda_{\sharp}\bar{\sigma}_1)R(y) \Big] + \bar{\lambda}^{\frac{1}{2}}_1 z(\bar{\lambda}_1 y + \bar{\sigma}_1).
\end{equation}

We will project $\vare$ on three directions of orthogonality, $y\Lambda Q$, $\Lambda Q$ and $Q$. 

We will use the following estimate 
\begin{equation}\label{estimate_on_difference_of_Q_in_proof_of_decomposition}
    \bar{\lambda}_1^{\frac{1}{2}} Q(\bar{\lambda}_1 x + \bar{\sigma}_1) - Q(x) = \bar{\sigma}_1 Q'(\bar{\lambda}_1 x) + \frac{1}{2}(\bar{\lambda}_1-1)\bar{\sigma}_1 Q'(\bar{\lambda}_1 x) + (\bar{\lambda}_1 -1)\Lambda Q(x) + \mathcal{O}\big((\bar{\lambda}_1 -1)^2 e^{-\frac{|x|}{2}}\big) + \mathcal{O}\big(\bar{\sigma}_1 e^{-|x|}\big).
\end{equation}
In order to get this estimate, we write
\begin{equation}
    \bar{\lambda}_1^{\frac{1}{2}} Q(\bar{\lambda}_1 x + \bar{\sigma}_1) - Q(x) = \Big[\bar{\lambda}_1^{\frac{1}{2}}Q(\bar{\lambda}_1 x + \bar{\sigma}_1 ) - \bar{\lambda}_1^{\frac{1}{2}}Q(\bar{\lambda}_1 x)\Big] + \Big[\bar{\lambda}_1^{\frac{1}{2}}Q(\bar{\lambda}_1 x)  - \bar{\lambda}_1^{\frac{1}{2}}Q(x)\Big] + (\bar{\lambda}_1^{\frac{1}{2}}-1)Q(x) =: I_1+I_2+I_3.
\end{equation}
By Taylor expansion with the integral form of the remainder, we get
\begin{equation}
    I_1 = \bar{\sigma}_1\, Q'(\bar{\lambda}_1x)+ \frac{1}{2}(\bar{\lambda}_1 -1)\bar{\sigma}_1\,Q'(\bar{\lambda}_1 x) + \mathcal{O}\big(((\bar{\lambda}_1 - 1)^2 + \bar{\sigma}_1^{2}) e^{-|x|}\big),
\end{equation}
\begin{equation}
    I_2 = (\bar{\lambda}_1 -1)x\, Q'(x) + \mathcal{O}\big((\bar{\lambda}_1 - 1)^2 e^{-\frac{|x|}{2}}\big),
\end{equation}
\begin{equation}
    I_3 = \frac{1}{2}(\bar{\lambda}_1 -1) Q(x) + \mathcal{O}\big((\bar{\lambda}_1 - 1)^2 e^{-|x|}\big).
\end{equation}
The three projection yields
\begin{equation}
    \bar{\sigma}_1\big( Q'(\bar{\lambda}_1 x), y \Lambda Q\big) + \mathcal{O}\big( |\bar{\lambda}_1 -1| + |\bar{\sigma}_1| \big) = \mathcal{O}\big(|\bar{b}_1| + \bar{\lambda}_1^{\frac{1}{2}}(\alpha^*)^{\frac{3}{2}}\big( |\bar{\lambda}_1-1| + |\bar{\sigma}_1| \big) + \|z\|_{L^2_{sol}} \big),
\end{equation}
\begin{equation}
    (\bar{\lambda}_1-1)\|\Lambda Q\|^2_{L^2} + \mathcal{O}\big( |\bar{\lambda}_1 -1|^2 + |\bar{\sigma}_1|^2 \big) = \mathcal{O}\big(|\bar{b}_1| + \bar{\lambda}_1^{\frac{1}{2}}(\alpha^*)^{\frac{3}{2}}\big((|\bar{\lambda}_1 -1|+|\bar{\sigma}_1| ) \big) + \|z\|_{L^2_{sol}} \big),
\end{equation}
\begin{equation}
    (\bar{\lambda}_1-1)\|\Lambda Q\|^2_{L^2} + \mathcal{O}\big( |\bar{\lambda}_1 -1|^2 + |\bar{\sigma}_1|^2 \big) = \mathcal{O}\big(\frac{\bar{b}_1}{16}\|Q\|^2_{L^1} + \bar{b}_1^{10} + \bar{\lambda}_1^{\frac{1}{2}}(\alpha^*)^{\frac{3}{2}}\big((|\bar{\lambda}_1 -1|+|\bar{\sigma}_1| ) \big) + \|z\|_{L^2_{sol}} \big).
\end{equation}
Thus yields
\begin{equation}
    |\bar{\lambda}_1 -1| + |\bar{\sigma}_1| \lesssim \|z\|_{L^2_{sol}} + |\bar{b}_1| + \bar{\lambda}_1^{\frac{1}{2}}(\alpha^*)^{\frac{3}{2}}(|\bar{\lambda}_1 -1| + |\bar{\sigma}_1|)
\end{equation}
and
\begin{equation}
    |\bar{b}_1| \lesssim \bar{\lambda}_1^{\frac{1}{2}}(\alpha^*)^{\frac{3}{2}}\big( |\bar{\lambda}_1 -1| + |\bar{\sigma}_1 | \big) + \| z\|_{L^2_{sol}}.
\end{equation}
Choosing $\alpha^*$ small enough, we get
\begin{equation}\label{estimate_on_lambda_sigma_b_in_proof_of_decomp_second_part}
    |\bar{\lambda}_1 -1| + |\bar{\sigma}_1| + |\bar{b}_1| \lesssim \|z\|_{L^2_{sol}}.
\end{equation}

Using the equation \eqref{equation_of_vare_inside_of_proof_of_decomposition} we estimate the $L^2$ and $L^2_{sol}$ norms of $\vare$. 
The estimate \eqref{estimate_on_difference_of_Q_in_proof_of_decomposition} yields 
\begin{equation}
    \|\bar{\lambda}_1 Q(\bar{\lambda}_1 y +\bar{\sigma}_1)\|_{L^2} \lesssim |\bar{\lambda}_1 - 1| + |\bar{\sigma}_1|.
\end{equation}

By the mean value theorem, the Lemma \eqref{lemma_on_the_decaying_tail} and since $\sigma_{\sharp} > \frac{t_0}{2} + \frac{x_0}{2}$, it holds
\begin{equation}
    \begin{split}
        & \| f(\sigma_{\sharp}) R(\bar{\lambda}_1\cdot + \bar{\sigma}_1) - f(\sigma_{\sharp} + \lambda_{\sharp}\bar{\sigma}_1)R(\cdot) \|_{L^2}\\
        & \lesssim |\sigma_{\sharp}|^{-\theta}\, \| R(\bar{\lambda}_1\cdot + \bar{\sigma}_1) - R(\cdot) \|_{L^2} + |\lambda_{\sharp}|\,|\sigma_{\sharp}|^{-\theta-1}\, |\bar{\sigma}_1|\\
        & \lesssim |\sigma_{\sharp}|^{-\theta} \, \big( |\bar{\lambda}_1-1| + |\bar{\sigma}_1| \big) + |\lambda_{\sharp}|\,|\sigma_{\sharp}|^{-\theta-1}\, |\bar{\sigma}_1| \lesssim |\bar{\lambda}_1 - 1| + |\bar{\sigma}_1|.
    \end{split}
\end{equation}
We have also
\begin{equation}
    |b|\| P_{b}\|_{L^2} \lesssim |b|^{\frac{2-\gamma}{2}} = |b|^{\frac 58}, \qquad |b| \|P_{b}\|_{L^2_{sol}} \lesssim |b|,
\end{equation}
and 
\begin{equation}
    \bar{\lambda}_1^{\frac 12}\|z(\bar{\lambda}_1 y +\bar{\sigma}_1) \|_{L^2} \lesssim \|z\|_{L^2}, \qquad \bar{\lambda}_1^{\frac 12}\|z(\bar{\lambda}_1 y +\bar{\sigma}_1) \|_{L^2_{sol}} \lesssim \|z\|_{L^2_{sol}}.
\end{equation}
Combining the above estimates with \eqref{estimate_on_lambda_sigma_b_in_proof_of_decomp_second_part} yields \eqref{estimates_on_norms_of_vares_in_z_in_decomp}.

The proof of the estimate \eqref{estimates_on_norms_of_partial_vares_in_z_in_decomp} is similar to the previous one. Fortunately, the terms with a power of $|\bar{b}_1|$ appear to have a power of more than $1$, thus we do not have a loss of power of $\|z(t)\|_{L^2_{sol}}$ as in the estimate of $\|\vare(t)\|_{L^2}$.
\end{proof}

\begin{lemma}[Equation of $\vare_s$]
The map $s\mapsto (\lambda(s),\sigma(s), b(s))$ is  $C^1$ and $\vare$ verifies the equation
    \begin{equation}\label{equation_of_eps_s}
        \partial_s \vare = \partial_y \Big[ -\partial^2_y\vare + \vare - \Big( (W+F+\vare)^5 - (W+F)^5 \Big) \Big] - \mathcal{E}(W) + \Big(\frac{\lambda_s}{\lambda}+b \Big)\Lambda\vare+\Big(\frac{\sigma_s}{\lambda}-1 \Big)\partial_y\vare - b\Lambda\vare,
    \end{equation}
    where $\mathcal{E}(W)$ is defined in \eqref{def_of_error_term_mathcal_E}.
\end{lemma}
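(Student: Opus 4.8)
The plan is to treat the two assertions separately: the $C^1$ character of $s\mapsto(\lambda(s),\sigma(s),b(s))$, and the identity \eqref{equation_of_eps_s}, which will follow algebraically once the former is in hand. I would deal with the equation first, as it requires no estimate. By the construction in Lemma \ref{lemma_on_decomposition_around_Q} we have $V(s,y)=W(s,y)+\vare(s,y)$, where $V$ is the rescaled unknown from \eqref{def_of_error_term_mathcal_E}, and since $U$ solves \eqref{gKdV_principal_eq} we have $\mathcal{E}(V)=0$. The functional $\mathcal{E}$ in \eqref{def_of_error_term_mathcal_E} is affine in its argument except for the term $\partial_y(V+F)^5$; subtracting $\mathcal{E}(W)$ from $\mathcal{E}(W+\vare)=0$ therefore yields
\[
0=\mathcal{E}(W)+\partial_s\vare+\partial_y\big(\partial_y^2\vare-\vare\big)+\partial_y\big[(W+F+\vare)^5-(W+F)^5\big]-\frac{\lambda_s}{\lambda}\Lambda\vare-\Big(\frac{\sigma_s}{\lambda}-1\Big)\partial_y\vare .
\]
Isolating $\partial_s\vare$, absorbing the dispersive and nonlinear contributions into a single $\partial_y(\cdot)$, and writing $\tfrac{\lambda_s}{\lambda}\Lambda\vare=\big(\tfrac{\lambda_s}{\lambda}+b\big)\Lambda\vare-b\Lambda\vare$ — which separates the small modulation term from the leading term $-b\Lambda\vare$ — reproduces exactly \eqref{equation_of_eps_s}. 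For this manipulation to be legitimate one only needs $\partial_s\vare$ and $\partial_s W$ to exist in a suitable sense, which is precisely the content of the regularity claim.

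For the $C^1$ regularity I would argue by the implicit function theorem with a parameter. Since $U$ solves \eqref{gKdV_principal_eq}, $\partial_tU=-\partial_x(\partial_{xx}U+U^5)\in C(I,H^{-2})$, hence $U\in C^1(I,H^{-2})$, and as $f$ is smooth in time by Lemma \ref{lemma_on_the_decaying_tail}, also $v=U-f\in C^1(I,H^{-2})$. I would then revisit the map $\Theta$ used in the proof of Lemma \ref{lemma_on_decomposition_around_Q}, this time letting $t$ appear as an extra variable through $v(t)$: each component of $\Theta$ is a pairing $\langle v(t),\phi_{\bar{\lambda},\bar{\sigma}}\rangle$ where $\phi_{\bar{\lambda},\bar{\sigma}}$ is a rescaled–translated copy of one of the Schwartz functions $y\Lambda Q$, $\Lambda Q$, $Q$, depending smoothly on $(\bar{\lambda},\bar{\sigma})$ in the Schwartz topology; since pairing a Schwartz function against $H^{-2}$ is continuous and $t\mapsto v(t)$ is $C^1$ into $H^{-2}$, the map $(t,\bar{\lambda},\bar{\sigma},\bar{b})\mapsto\Theta$ is jointly $C^1$. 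Its partial Jacobian in $(\bar{\lambda},\bar{\sigma},\bar{b})$ at a zero differs from the explicit invertible matrix of Lemma \ref{lemma_on_decomposition_around_Q} by an $\mathcal{O}((\alpha^{*})^{1/2})$ perturbation, hence stays invertible for $\alpha^{*}$ small. The implicit function theorem then upgrades the continuous curve $t\mapsto(\bar{\lambda}_1,\bar{\sigma}_1,\bar{b}_1)(t)$ produced in Lemma \ref{lemma_on_decomposition_around_Q} to a $C^1$ curve, and since $(\lambda,\sigma,b)$ are recovered from it through smooth operations involving $\lambda_\sharp,\sigma_\sharp$ and $f$, the map $t\mapsto(\lambda,\sigma,b)$ is $C^1$ on $I$. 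Passing to the variable $s$ is free of charge because $t\mapsto s(t)$ is a $C^1$ diffeomorphism with $\tfrac{ds}{dt}=\lambda^{-3}>0$ and $C^1$ inverse $\tau$, see \eqref{relation_on_diff_of_tau_and_zeta}. Alongside this I would record that $W=Q_{b}+rR$ is $C^1$ in $s$: the map $b\mapsto Q_b=Q+bP_b$ is differentiable, including at $b=0$, since the only potentially singular contribution of $\chi_b(y)=\chi(|b|^{\gamma}y)$ is $\gamma|b|^{\gamma}y\,\chi'(|b|^{\gamma}y)P(y)$, which tends to $0$ as $b\to0$ by the decay of $P$ on $\supp\chi'$; and $r(s)=\lambda^{1/2}(s)f(\tau(s),\sigma(s))$ is $C^1$ because $\lambda,\sigma,\tau$ are $C^1$ and $f$ is $C^\infty$ by Lemma \ref{lemma_on_the_decaying_tail}.

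The main obstacle is the regularity step; the derivation of \eqref{equation_of_eps_s} itself is pure bookkeeping. The delicate point in the regularity argument is that $v$ is a priori controlled only in $C^1(I,H^{-2})$, so the time derivatives of the orthogonality conditions — and ultimately $\partial_s\vare$ — must be interpreted in a negative Sobolev space; this is made harmless by the fact that the three orthogonality directions are Schwartz functions, so that all the pairings entering $\Theta$ are well defined and smooth in $(t,\bar{\lambda},\bar{\sigma},\bar{b})$. A minor additional care is needed because of the non-smooth factor $|b|^{\gamma}$ in $P_b$, which nevertheless does not spoil the differentiability of $b\mapsto Q_b$, for the reason indicated above.
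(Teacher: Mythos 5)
The algebraic part of your argument — subtracting $\mathcal{E}(W)$ from $\mathcal{E}(W+\vare)=0$ and splitting $\frac{\lambda_s}{\lambda}\Lambda\vare=\big(\frac{\lambda_s}{\lambda}+b\big)\Lambda\vare-b\Lambda\vare$ — is correct and is exactly what the paper calls ``standard computations.'' The observation that $\mathcal{E}$ is affine except for the quintic term is the right way to organize it.

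For the regularity, you take a genuinely different route from the paper, which defers to the Cauchy–Lipschitz theorem on the ODE system of parameters (citing Combet--Martel). Your plan, to use the implicit function theorem with $t$ as a parameter and exploit the fact that $v\in C^{1}(I,H^{-2})$ pairs $C^{1}$-in-time against Schwartz test functions, is a standard and valid alternative. However, as written, the last step has a gap. You write that the IFT upgrades $t\mapsto(\bar\lambda_1,\bar\sigma_1,\bar b_1)$ to a $C^{1}$ curve and that $(\lambda,\sigma,b)$ are then ``recovered from it through smooth operations involving $\lambda_\sharp,\sigma_\sharp$ and $f$.'' But the reference functions $\lambda_\sharp(t),\sigma_\sharp(t)$ from the ``close to the approximate blow-up profile'' hypothesis \eqref{supposed_decomposition_around_Q}--\eqref{supposed_decomposition_around_Q_condition} are only assumed to exist and be continuous; no $C^{1}$ regularity is imposed on them. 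The recovery formulas $\lambda(s(t))=\lambda_\sharp(t)\bar\lambda_1(t)$, $\sigma(s(t))=\lambda_\sharp(t)\bar\sigma_1(t)+\sigma_\sharp(t)$ would therefore only yield continuity, not $C^{1}$. Moreover, if $\Theta$ is kept in the form of Lemma \ref{lemma_on_decomposition_around_Q} — namely depending on $t$ through $v_1(t)=\lambda_\sharp^{1/2}(t)v(t,\lambda_\sharp(t)\cdot+\sigma_\sharp(t))$ — then $t\mapsto\Theta$ is itself not $C^{1}$ for the same reason, and the IFT-with-parameter argument does not even get off the ground.

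The fix is straightforward and preserves your strategy: eliminate $\lambda_\sharp,\sigma_\sharp$ from the IFT application. Fix $t_1\in I$ and define a map $\tilde\Theta(t,\lambda,\sigma,b)$ by computing the three orthogonality pairings with $\vare$ expressed directly as $\lambda^{1/2}v(t,\lambda y+\sigma)-Q_b(y)-\lambda^{1/2}f(t,\sigma)R(y)$, i.e.\ in terms of $v(t)$ and the \emph{absolute} parameters. This map is jointly $C^{1}$ in $(t,\lambda,\sigma,b)$ by your pairing argument (now applied to $v$, whose time regularity we control), it vanishes at $(t_1,\lambda(t_1),\sigma(t_1),b(t_1))$ by the decomposition at $t_1$, and its partial Jacobian in $(\lambda,\sigma,b)$ at that point is a rescaling of the matrix of Lemma \ref{lemma_on_decomposition_around_Q} and hence invertible for $\alpha^{*}$ small. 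The IFT then gives a local $C^{1}$ curve which, by the uniqueness already established, coincides with the modulation parameters; since $t_1$ was arbitrary, $(\lambda,\sigma,b)\in C^{1}(I)$, and the change of variables $t\leftrightarrow s$ is a $C^{1}$ diffeomorphism as you note. Your observation about the differentiability of $b\mapsto Q_b$ at $b=0$ through the compactly supported factor $\chi'(|b|^{\gamma}y)$ is correct and worth recording.
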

\begin{remark}
    The equation of $\vare$ is obtained by standard computations. The regularity 
    of the parameters is obtained by using the Cauchy–Lipschitz theorem on the differential system of the parameters. 
    See, for example, \cite[Lemma 2.7]{Combet-Martel-18}
    for a similar argument.
\end{remark}

\vspace{0.3cm}
The following lemma states the estimates on $\vare$ induced by the conservation laws.
\begin{lemma}[Mass and energy estimates]
    Under the bootstrap assumption \eqref{BS1}, for $s_0 \gg 1$, the following holds true on $s(I)$
    \begin{equation}\label{estimate_on_L2_norm_of_vare}
        \|\vare(s) \|^2_{L^2} \lesssim \Big\lvert\int U^2_0 - \int Q^2  \Big\rvert + s^{-2+\gamma}+ x^{-2\theta+1}_0,
    \end{equation}
    \begin{equation}\label{estimate_on_L2_norm_of_grad_vare}
        \lambda^{-2}\|\partial_y \vare\|^2_{L^2} \lesssim \lvert E(U_0)\rvert+s^{-3+4\beta}+x^{-2\theta-1}_0 + |g(s)|.
    \end{equation}
\end{lemma}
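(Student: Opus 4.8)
The two estimates come from the two conservation laws of \eqref{gKdV_principal_eq} — conservation of the mass $M(U)=\int U^2$ and of the energy $E(U)$ — combined with the decomposition of Lemma~\ref{lemma_on_decomposition_around_Q}, the mass and energy expansions of $W+F$, the orthogonality conditions \eqref{orthogonal_conditions_in_lemma}, and the coercivity of $\mathcal{L}$.

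\emph{Mass estimate.} By Lemma~\ref{lemma_on_decomposition_around_Q}, $\lambda^{\frac12}(t)\,U(t,\lambda(t)y+\sigma(t))=W(s,y)+F(s,y)+\vare(s,y)$, so after the change of variables $x=\lambda y+\sigma$ conservation of mass reads $\int U_0^2=\int(W+F+\vare)^2$. Expanding the square, writing $W+F=Q+bP_b+rR+F$, and using $(\vare,Q)=0$ to discard $\int Q\vare$, one gets
\[
\|\vare(s)\|_{L^2}^2=\Big(\int U_0^2-\int Q^2\Big)-\Big(\int (W+F)^2-\int Q^2\Big)-2b\!\int\!P_b\vare-2r\!\int\!R\vare-2\!\int\!F\vare .
\]
The term $\int(W+F)^2$ is controlled by \eqref{estimate_on_mass_W_plus_F}, and the three cross terms by Cauchy--Schwarz together with $\|P_b\|_{L^2}\lesssim|b|^{-\gamma/2}$ and the bounds $|b|,|r|\lesssim s^{-1}$, $\|F\|_{L^2}\lesssim x_0^{-\theta+\frac12}$ from \eqref{BS1} and Lemma~\ref{lemma_with_all_estimates_on_r_F_etc}; this lets one absorb $\tfrac12\|\vare\|_{L^2}^2$ on the left and leaves $O(|b|^{2-\gamma}+s^{-2}+x_0^{-2\theta+1})=O(s^{-2+\gamma}+x_0^{-2\theta+1})$. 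Finally, inside $\int(W+F)^2$ the profile correction $2b\int PQ+\tfrac12 r\int Q$ recombines — using $r=c_0\lambda^{\frac12}\sigma^{-\theta}+O(\lambda^{\frac12}\sigma^{-\theta-2})$, $(P,Q)=\tfrac1{16}(\int Q)^2$ and the definition of $g$ — into $2(P,Q)\lambda^2 g(s)+O(s^{-3+2\beta})$, which is small under \eqref{BS1} and is absorbed into $|\int U_0^2-\int Q^2|$ and the stated error; this proves \eqref{estimate_on_L2_norm_of_vare}.

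\emph{Energy estimate.} The same change of variables gives $\lambda^2 E(U_0)=E(W+F+\vare)$. Expanding $E$ about $W+F$ and isolating the quadratic part in $\vare$,
\[
\tfrac12\!\int(\partial_y\vare)^2-\tfrac52\!\int (W+F)^4\vare^2=\lambda^2 E(U_0)-E(W+F)-\langle E'(W+F),\vare\rangle+\mathcal{N}(\vare),
\]
where $\mathcal{N}$ collects the terms cubic and higher in $\vare$. I would then replace $(W+F)^4$ by $Q^4$ — the difference being $\lesssim\delta(\alpha^*)\|\vare\|_{L^2_{sol}}^2$, since $W+F-Q=bP_b+rR+F$ is small and the correction is cut off by a fast‑decaying weight — and rewrite $\tfrac12\int(\partial_y\vare)^2-\tfrac52\int Q^4\vare^2=\tfrac12(\mathcal{L}\vare,\vare)-\tfrac12\|\vare\|_{L^2}^2$. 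The linear term $\langle E'(W+F),\vare\rangle=-\int[\partial_y^2(W+F)+(W+F)^5]\vare$ is handled by observing that, since $-Q''+Q-Q^5=0$ and $\mathcal{L}R=5Q^4$, one has $\partial_y^2(W+F)+(W+F)^5=Q+\{\text{terms of size }O(|b|+|r|)\text{ and }\partial_y F,\ \partial_y^2F,\ Q^4F\}$, so that after discarding $\int Q\vare=0$ the remainder is distributed, via Cauchy--Schwarz and the decaying weights, between $\varepsilon\|\partial_y\vare\|_{L^2}^2$ (absorbed) and $O(\lambda^2 s^{-2}+\lambda^2 x_0^{-2\theta-1})$; here the hypothesis $\beta>\tfrac12$ is exactly what turns $\lambda^{-2}s^{-2}=s^{2\beta-2}$ into the admissible $s^{-3+4\beta}$. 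The term $\mathcal{N}(\vare)$ is absorbed by Sobolev embedding and the smallness of $\|\vare\|_{H^1}$. Using the quantitative coercivity $(\mathcal{L}\vare,\vare)\ge\nu_0\|\vare\|_{H^1}^2$ — legitimate because $\vare$ is orthogonal to $Q,\Lambda Q,y\Lambda Q$ — to get $\|\partial_y\vare\|_{L^2}^2\lesssim(\mathcal{L}\vare,\vare)$, bounding $\|\vare\|_{L^2}^2$ by \eqref{estimate_on_L2_norm_of_vare}, and replacing $\lambda^{-2}E(W+F)$ by $-\tfrac1{16}(\int Q)^2 g(s)+O(s^{-3+2\beta}+\lambda^2 x_0^{-2\theta-1})$ via \eqref{estimate_on_energy_W_plus_F} (this is where the $|g(s)|$ on the right‑hand side comes from), one obtains, after dividing by $\lambda^2\approx s^{-2\beta}$, the estimate \eqref{estimate_on_L2_norm_of_grad_vare}.

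\emph{Main obstacle.} The proof is conceptually standard but delicate in the bookkeeping. The sensitive point is the linear‑in‑$\vare$ contribution to the energy expansion: once $\int Q\vare$ is killed by orthogonality, the surviving pieces — above all those carrying $F$, $\partial_yF$, $\partial_y^2F$ and $Q^4F$, together with the terms created by replacing $(W+F)^4$ by $Q^4$ — must be split between a multiple of $\|\partial_y\vare\|_{L^2}^2$ small enough to be absorbed and remainders that, after division by $\lambda^2$, still fall under $s^{-3+4\beta}+x_0^{-2\theta-1}$. Likewise, the $\|\vare\|_{L^2}^2$ produced by the $\mathcal{L}$–rewriting has to be fed in from \eqref{estimate_on_L2_norm_of_vare} without losing the gain, which forces one to exploit the decaying weights rather than bound everything in $L^2$. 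All of this hinges on the exact exponents dictated by the formal law \eqref{formal_law_of_parameters} and on the hypothesis $\beta>\tfrac12$ (equivalently $\theta>\tfrac32$).
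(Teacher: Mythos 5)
Your mass estimate essentially reproduces the paper's argument: expand mass conservation, use $(\vare,Q)=0$ to kill $\int Q\vare$, absorb the remaining cross terms with Young's inequality, and invoke \eqref{estimate_on_mass_W_plus_F}. That part is fine.

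For the energy estimate, however, the detour through $\mathcal{L}$ and its coercivity creates a genuine gap. After writing $\tfrac12\int(\partial_y\vare)^2-\tfrac52\int Q^4\vare^2=\tfrac12(\mathcal{L}\vare,\vare)-\tfrac12\|\vare\|^2_{L^2}$ and then invoking $(\mathcal{L}\vare,\vare)\geq\nu_0\|\vare\|^2_{H^1}$ to get $\|\partial_y\vare\|^2\lesssim(\mathcal{L}\vare,\vare)=2A+\|\vare\|^2_{L^2}$ (with $A$ the remaining energy terms), you are forced to control $\lambda^{-2}\|\vare\|^2_{L^2}$. But \eqref{estimate_on_L2_norm_of_vare} only gives $\|\vare\|^2_{L^2}\lesssim\big|\int U_0^2-\int Q^2\big|+s^{-2+\gamma}+x_0^{-2\theta+1}$; after division by $\lambda^2\approx s^{-2\beta}$ this yields $s^{2\beta}\big|\int U_0^2-\int Q^2\big|$ (unbounded in $s$ for generic supercritical mass) and $s^{2\beta-5/4}$, which exceeds the allowed $s^{-3+4\beta}$ for all $\beta<\tfrac78$. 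So the bound simply does not close.

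The paper avoids this precisely by never introducing $\|\vare\|^2_{L^2}$: it keeps $\tfrac12\int(\partial_y\vare)^2$ on the left, isolates only the favourably weighted quadratic piece $\int(W+F)^4\vare^2\approx\int Q^4\vare^2$, and bounds it by $\|\vare\|^2_{L^2_{sol}}\lesssim s^{-5/2}$ using the bootstrap \eqref{BS2} (which the proof needs, even though the lemma statement only cites \eqref{BS1}); after division by $\lambda^2$ this gives $s^{2\beta-5/2}\lesssim s^{-3+4\beta}$, which is fine. Note also that your own identity actually cancels $\|\vare\|^2_{L^2}$ if you just solve it for $\|\partial_y\vare\|^2$ directly: $\|\partial_y\vare\|^2=2A+5\int Q^4\vare^2$, with no $\|\vare\|^2_{L^2}$ remaining. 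Coercivity of $\mathcal{L}$ is therefore superfluous here, and using it the way you did reintroduces a term the scheme cannot afford.
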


\begin{proof}[Proof: Mass and energy estimates]
By mass conservation, we get
\begin{equation}
\int U^2_0 = \int U^2 = \|W+F+\vare\|^2_{L^2} = \|W+F\|^2_{L^2} + \|\vare\|^2_{L^2} + 2\int (W+F)\vare.
\end{equation}
Thus, we get
\begin{equation}
    \|\vare\|^2_{L^2} \leq \Big\lvert \int U^2_0 - \int Q^2\Big\rvert + \Big\lvert \int (W+F)^2 - \int Q^2\Big\rvert + 2\Big\lvert \int (W+F)\vare \Big\rvert.
\end{equation}
For the last term on the right-hand side, by the orthogonality relations \eqref{orthogonal_conditions_in_lemma}, the Young inequality, the estimate \eqref{estimates_on_norms_L2_of_F}, the property \eqref{property_on_right_on_P} on $P$ and \eqref{BS1}, yields
\begin{equation}
\begin{split}
    \Big\lvert \int (W+F)\vare \Big\rvert 
    & \lesssim \frac{1}{20}\|\vare\|^2_{L^2} + C\Big(|b|^2\int P^2_b + |r|^2\int R^2 + \int F^2 \Big) \\
    & \leq \frac{1}{20}\|\vare\|^2_{L^2} + C\Big(|b|^{2-\gamma} + |b|^2 + |r|^2 + x^{-2\theta+1}_0 \Big)\\
    & \leq \frac{1}{20}\|\vare\|^2_{L^2} + Cs^{-2+\gamma}+ Cx^{-2\theta+1}_0.
\end{split}
\end{equation}
Combining with \eqref{estimate_on_mass_W_plus_F}, we get the desired estimate on the mass of $\vare$.

\vspace{0.4cm}
The energy conservation yields
\begin{equation}
\begin{split}
    & \lambda^2 E(U_0) =\lambda^2 E(U)= E(W+F+\vare)\\ 
    & =E(W+F) + \int \partial_y(W+F)\partial_y \vare +\frac{1}{2}\int (\partial_y \vare)^2 -\frac{1}{6}\int \big( (W+F+\vare)^6-(W+F)^6 \big).
\end{split}
\end{equation}
We write, using the equation of $-Q''+Q-Q^5 = 0$ and the orthogonality relations \eqref{orthogonal_conditions_in_lemma} on $\vare$,
\begin{equation}\label{proof_of_L2_grad_epsilon_rewriting1}
\begin{split}
    & \int \partial_y (W+F)\partial_y\vare -\frac{1}{6}\int \big((W+F+\vare)^6 - (W+F)^6\big)= b\int \partial_y P_b \; \partial_y \vare + r \int \partial_y R \; \partial_y \vare\\
    &  - \int (\partial^2_y F + F^5 )\vare - \frac{1}{6}\int \Big((W+F+\vare)^6-(W+F)^6- 6Q^5\vare - 6F^5\vare \Big).
\end{split}
\end{equation}
By Cauchy-Schwarz and Young's inequalities, combined with \eqref{estimates_on_r} and \eqref{BS1}, it holds
\begin{equation}
    b \Big\lvert\int\partial_y P_b\; \partial_y \vare \Big\rvert + |r| \Big\lvert\int \partial_y R \; \partial_y \vare \Big\rvert \leq \frac{1}{20}\|\partial_y \vare\|^2_{L^2} + Cs^{-2}.
\end{equation}
Proceeding similarly and then using the estimate \eqref{estimates_on_norms_L2_of_F} on $L^2$ norm of $\partial_yF$ yields
\begin{equation}
    \Big\lvert \int \partial^2_y F \; \vare \Big\lvert \lesssim \frac{1}{20}\|\partial_y \vare\|^2_{L^2} + C s^{-2\beta}x^{-2\theta-1}_0.
\end{equation}
The Hölder's, Young's and Gagliardo-Nirenberg inequalities, then combined with \eqref{estimates_on_norms_L2_of_F} and \eqref{BS2} give
\begin{equation}
\begin{split}
    \Big\lvert \int F^5 \vare \Big\rvert 
    & \lesssim \Big( \int \vare^6\Big)^{\frac{1}{6}}\Big(\int F^6\Big)^{\frac{1}{6}} \lesssim \int \vare^6 +\int F^6 
    \lesssim \|\vare\|^4_{L^2}\|\partial_y\vare\|^2_{L^2} + \|F\|^4_{L^2}\| \partial_y F\|^2_{L^2} \\
    & \leq \frac{1}{20}\|\partial_y \vare \|^2_{L^2} + C s^{-2\beta}x^{-6\theta-1}_0.
\end{split}
\end{equation}
In order to estimate the last term on the right-hand side in \eqref{proof_of_L2_grad_epsilon_rewriting1}, by interpolation, \eqref{BS1}, the estimates \eqref{estimates_on_norms_L_infty_of_F_with_exp}, \eqref{estimates_on_norms_L_infty_of_F}, and finally Gagliardo-Nirenberg and \eqref{BS2}, we have
\begin{equation}
    \Big\lvert \int \Big((W+F+\vare)^6-(W+F)^6- 6Q^5\vare - 6F^5\vare \Big\lvert \leq \frac{1}{20}\|\partial_y \vare\|^2_{L^2} + Cs^{-2}.
\end{equation}
Combining previous estimates with \eqref{estimate_on_energy_W_plus_F}, we get the desired estimate.
\end{proof}

\begin{lemma}[Modulation estimates]\label{lemma_mod_estimates_after_decomp}
    Assume \eqref{BS1} and 
    \begin{equation}
        \sigma(\tau(s)) > \frac{2}{3}\tau(s)+\frac{2}{3}x_0, \quad \forall s\in s(I).
    \end{equation}
    Then, for $s_0\gg1$, the following estimates hold on $s(I)$
    \begin{equation}\label{estimate_on_m_mod_estim}
        |\vec{m}| \lesssim \| \vare \|_{L^2_{sol}} + s^{-2},
    \end{equation}
    \begin{equation}\label{estimation_on_bs_mod_estim}
        |b_s| \lesssim \|\vare\|^2_{L^2_{sol}} + s^{-2},
    \end{equation}
    \begin{equation}\label{estimation_on_gs_mod_estim}
        \lambda^2|g_s| \lesssim s^{-4+2\beta}+\|\vare \|^2_{L^2_{sol}} + s^{-1}\|\vare \|_{L^2_{sol}}.
    \end{equation}
\end{lemma}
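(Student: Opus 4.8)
The plan is the standard modulation argument. Differentiate the three orthogonality conditions \eqref{orthogonal_conditions_in_lemma} in $s$, so that $(\partial_s\vare,\psi)=0$ for each of the fixed functions $\psi\in\{Q,\Lambda Q,y\Lambda Q\}$, and substitute the equation \eqref{equation_of_eps_s} together with the decomposition \eqref{equation_rewriting_of_error}, $\mathcal E(W)=-\vec m\cdot\vec M Q+\mathcal R$. Writing $m_1,m_2$ for the two components of $\vec m$ and $\mathcal N(\vare)=-\partial_y^2\vare+\vare-\big((W+F+\vare)^5-(W+F)^5\big)$ for the bracketed quantity in \eqref{equation_of_eps_s}, an integration by parts in $\partial_y$ gives, for each such $\psi$, the scalar identity
\[
m_1(\Lambda Q,\psi)+m_2(Q',\psi)=(\mathcal R,\psi)+(\mathcal N(\vare),\psi')-m_1(\Lambda\vare,\psi)-m_2(\partial_y\vare,\psi)+b(\Lambda\vare,\psi).
\]
By the algebra recalled in the proof of Lemma \ref{lemma_on_decomposition_around_Q} — $(\Lambda Q,Q)=(Q',Q)=(\Lambda Q,y\Lambda Q)=(Q',\Lambda Q)=0$ and $(Q',y\Lambda Q)=\|\Lambda Q\|_{L^2}^2$ — the two identities for $\psi=\Lambda Q$ and $\psi=y\Lambda Q$ form a linear system for $\vec m$ with matrix $\|\Lambda Q\|_{L^2}^2\,\mathrm{Id}$, hence uniformly invertible, while the identity for $\psi=Q$ carries no $\vec m$ at leading order and serves to extract $b_s$ through the refined projection \eqref{estimate_on_scalar_prod_R_Q}. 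The hypothesis $\sigma(\tau(s))>\tfrac23\tau(s)+\tfrac23 x_0$ together with \eqref{BS1} is precisely what makes Lemma \ref{lemma_with_all_estimates_on_r_F_etc} applicable, hence the estimates on $r$ and $F$ used below valid.

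The core is the estimation of the right-hand sides. For $(\mathcal R,\psi)$ with $\psi\in\mathcal Y$ — the case of $\Lambda Q$ and $y\Lambda Q$ — one uses \eqref{estimate_on_scalar_prod_R_with_phi}: $|(\mathcal R,\psi)|\lesssim s^{-2}+|b_s|+s^{-1}|\vec m|$; for $\psi=Q$ one instead invokes \eqref{estimate_on_scalar_prod_R_Q}, which isolates $b_s+2b^2-\tfrac{4c_0}{\int Q}\lambda^{1/2}\sigma^{-\theta}\big(\tfrac32\tfrac{\lambda_s}{\lambda}+\theta\tfrac{\sigma_s}{\sigma}\big)$, the last two summands being $O(s^{-2})$ since $\lambda^{1/2}\sigma^{-\theta}\sim s^{-1}$ and $\tfrac{\lambda_s}{\lambda},\tfrac{\sigma_s}{\sigma}=O(s^{-1})$ under \eqref{BS1}. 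For $(\mathcal N(\vare),\psi')$, one expands the quintic and writes $\mathcal N(\vare)=\mathcal L\vare-5\big((W+F)^4-Q^4\big)\vare-\mathcal N_{\ge2}(\vare)$, with $\mathcal N_{\ge2}$ gathering the monomials of degree $\ge2$ in $\vare$: the principal linear term contributes $(\mathcal L\vare,\psi')=(\vare,\mathcal L\psi')$, which vanishes for $\psi=Q$ because $\mathcal L Q'=0$ and is $O(\|\vare\|_{L^2_{sol}})$ for $\psi=\Lambda Q,y\Lambda Q$ since then $\mathcal L\psi'\in\mathcal Y$; the factor $(W+F)^4-Q^4=O(|b|+|r|+|F|)$ is $O(s^{-1})$ on the exponentially localized support of $\psi'$ by \eqref{estimates_on_r}, \eqref{estimates_on_norms_L_infty_of_F}, \eqref{estimates_on_norms_L_infty_of_F_with_exp} and \eqref{BS1}, giving $O(s^{-1}\|\vare\|_{L^2_{sol}})$; and $(\mathcal N_{\ge2}(\vare),\psi')=O(\|\vare\|_{L^2_{sol}}^2)$ using the $L^\infty$-smallness of $\vare$ from the decomposition. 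Finally $m_j(\cdot,\psi)$ and $b(\Lambda\vare,\psi)$ are $O\big((|\vec m|+|b|)\|\vare\|_{L^2_{sol}}\big)$, with moreover $(\Lambda\vare,Q)=-(\vare,\Lambda Q)=0$.

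Collecting these bounds, the $\psi=\Lambda Q,y\Lambda Q$ identities give $|\vec m|\lesssim\|\vare\|_{L^2_{sol}}+s^{-2}+|b_s|+(s^{-1}+\|\vare\|_{L^2_{sol}})|\vec m|$, and the $\psi=Q$ identity (with \eqref{estimate_on_scalar_prod_R_Q}) gives $|b_s|\lesssim s^{-2}+\|\vare\|_{L^2_{sol}}^2+s^{-1}\|\vare\|_{L^2_{sol}}+(s^{-1}+\|\vare\|_{L^2_{sol}})|\vec m|+|b_s|e^{-s^\gamma/2}$. For $s_0$ large and $\alpha^*$ small the terms carrying $|b_s|$ or $|\vec m|$ with small coefficients are absorbed into the left-hand sides; using $s^{-1}\|\vare\|_{L^2_{sol}}\lesssim s^{-2}+\|\vare\|_{L^2_{sol}}^2$ one first gets $|b_s|\lesssim s^{-2}+\|\vare\|_{L^2_{sol}}^2+(s^{-1}+\|\vare\|_{L^2_{sol}})|\vec m|$, feeds this into the $\vec m$-inequality, absorbs once more, and obtains \eqref{estimate_on_m_mod_estim}; re-inserting it into the $b_s$-inequality yields \eqref{estimation_on_bs_mod_estim}. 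For \eqref{estimation_on_gs_mod_estim}, the $\psi=Q$ identity also gives $|(\mathcal R,Q)|\le|(\mathcal N(\vare),Q')|+|m_2|\,|(\partial_y\vare,Q)|\lesssim s^{-1}\|\vare\|_{L^2_{sol}}+\|\vare\|_{L^2_{sol}}^2$ (using \eqref{estimate_on_m_mod_estim}); inserting this and \eqref{estimate_on_m_mod_estim} into \eqref{estimate_on_gs_with_prod_scal_R_Q} and using $\beta>\tfrac12$ (whence $s^{-3}\lesssim s^{-4+2\beta}$) gives \eqref{estimation_on_gs_mod_estim}.

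The scheme itself is classical; the real difficulty, and the place where one can slip, is the bookkeeping of the error terms — expanding the quintic and bounding every product of $Q_b,P_b,R,F,r$ and $\vare$ against the localized test functions — together with the need to make the coupled system for $(\vec m,b_s,g_s)$ close with the stated smallness, which is \emph{quadratic} in $\|\vare\|_{L^2_{sol}}$ for $b_s$ and $g_s$. It is this last point that forces the two-stage absorption above and the use of the sharp projections \eqref{estimate_on_scalar_prod_R_Q} and \eqref{estimate_on_gs_with_prod_scal_R_Q} rather than the cruder bound \eqref{estimate_on_scalar_prod_R_with_phi} when $b_s$ and $g_s$ are concerned.
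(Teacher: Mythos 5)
Your proposal is correct and follows essentially the same route as the paper: differentiate the three orthogonality conditions, split $\mathcal E(W)=-\vec m\cdot\vec M Q+\mathcal R$, observe that the $(\Lambda Q,y\Lambda Q)$ projections give a system with matrix $\|\Lambda Q\|_{L^2}^2\,\mathrm{Id}$, use \eqref{estimate_on_scalar_prod_R_with_phi} for those two projections and the refined \eqref{estimate_on_scalar_prod_R_Q} for the $Q$-projection to extract $b_s$, then close the coupled inequalities for $(|\vec m|,|b_s|)$ by absorbing the small-coefficient cross terms, and finally feed the resulting $(\mathcal R,Q)$ bound into \eqref{estimate_on_gs_with_prod_scal_R_Q} for $g_s$. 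The paper carries out the same bookkeeping with the linear contribution $(\vare,\mathcal L(\Lambda Q)')$ written on the left-hand side rather than folded into the $O(\|\vare\|_{L^2_{sol}})$ error as you do, but that is purely cosmetic.
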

\begin{proof}
    We will differentiate with respect to $s$ the three orthogonality relations in \eqref{orthogonal_conditions_in_lemma}.

    Differentiating the orthogonality with $\Lambda Q$, using the equation of $\vare_s$ in \eqref{equation_of_eps_s} and the relation from parity properties $(Q', \Lambda Q) = 0$, we have
    \begin{equation}
        \begin{split}
            \|\Lambda Q\|^2_{L^2}\Big(\frac{\lambda_s}{\lambda}+b \Big) + \big(\vare, \mathcal{L}(\Lambda Q)'\big) 
            & = 
             \big( \vec{m}\cdot \vec{M}\vare, \Lambda Q\big)
            - \big( \mathcal{R}, \Lambda Q\big) - b \big( \Lambda \vare, \Lambda Q \big)\\ 
            & + \big( (W+F+\vare)^5 - (W+F)^5 - 5Q^4 \vare, (\Lambda Q)'\big).
        \end{split}
    \end{equation}
    The estimate \eqref{BS1} yields
    \begin{equation}
        \big\lvert \big( \vec{m}\cdot \vec{M} \vare, \Lambda Q \big) \big\rvert \lesssim |\vec{m}| \| \vare\|_{L^2_{sol}},  \qquad \big\lvert \big(b \Lambda \vare, \Lambda Q \big)\big\rvert \lesssim s^{-2} + \|\vare\|^{2}_{L^2_{sol}}.
    \end{equation}
    By interpolation, \eqref{estimates_on_r}, \eqref{estimates_on_norms_L_infty_of_F_with_exp}, \eqref{BS1}, we have 
    \begin{equation}\label{estimate_1_in_proof_of_mod_estimates}
        \begin{split}
            & \big( (W+F+\vare)^5 - (W+F)^5 - 5Q^4 \vare, (\Lambda Q)'\big)
            \lesssim \int Q^3 |bP_b + rR + F||\vare| (\Lambda Q)'\\
            & + \int (bP_b + rR + F)^4 |\vare| (\Lambda Q)' + \int |(W+F)^3|\vare^2 (\Lambda Q)' + \int |\vare|^5 (\Lambda Q)'
            \lesssim s^{-1}\|\vare\|_{L^2_{sol}} + \|\vare\|^2_{L^2_{sol}}.
        \end{split}
    \end{equation}
    Where the estimate term of order 5 is provided by the following combined with \eqref{estimates_on_norms_of_vares_in_z_in_decomp}
    \begin{equation}
        \int \vare^5 (\Lambda Q)' \lesssim \|\vare \|^3_{L^{\infty}}\int \vare^{2}e^{-\frac{|y|}{2}}\lesssim \|\vare_y\|^{\frac{3}{2}}_{L^2}\|\vare\|^{\frac{3}{2}}_{L^2}\| \vare\|^2_{L^2_{sol}}.
    \end{equation}
    
    Finally combining with the estimate \eqref{estimate_on_scalar_prod_R_with_phi}, we get
    \begin{equation}\label{estimate_on_first_component_of_m_in_proof_of_mod_estimates}
        \Bigg\lvert \Big( \frac{\lambda_s}{\lambda} + b\Big) + \frac{\big( \vare,\mathcal{L}(\Lambda Q)' \big)}{\|\Lambda Q\|^2_{L^2}}\Bigg\rvert \lesssim |\vec{m}|\big( s^{-1} + \|\vare\|_{L^2_{sol}} \big) + |b_s| + s^{-2} + \|\vare\|^2_{L^2_{sol}}.
    \end{equation}
     Arguing similarly for the second orthogonality, using in addition the relations $(Q', y \Lambda Q) = \|\Lambda Q\|^2_{L^2}$ and by parity $(\Lambda Q, y \Lambda Q) =0 $, we get
    \begin{equation}\label{estimate_on_second_component_of_m_in_proof_of_mod_estimates}
        \Bigg\lvert \Big( \frac{\sigma_s}{\lambda} -1\Big) + \frac{\big( \vare,\mathcal{L}(y\Lambda Q)' \big)}{\|\Lambda Q\|^2_{L^2}}\Bigg\rvert \lesssim |\vec{m}|\big( s^{-1} + \|\vare\|_{L^2_{sol}} \big) + |b_s| + s^{-2} + \|\vare\|^2_{L^2_{sol}}.
    \end{equation}

    For the third orthogonality we write
    \begin{equation}
    \begin{split}
        \frac{d}{ds}\big( \vare, Q \big)
        & = \big( \vare, \mathcal{L}Q'\big) + \big( \big( (W+F+\vare)^5- (W+F)^5 - 5Q^4\vare \big), Q' \big)\\
        &- \big( \mathcal{E}(W), Q \big) + \big( \vec{m}\cdot\vec{M} \vare, Q \big) - b\big(\Lambda \vare, Q \big).
    \end{split}
    \end{equation}
    Since $\mathcal{L}Q' = 0$, the first term is zero. The second term we estimate in the similar way as for \eqref{estimate_1_in_proof_of_mod_estimates}.
    
    From the equation of $\mathcal{E}(W)$ and since $\big( \Lambda Q, Q \big) = \big( \partial_y Q,Q\big) =0$, we get 
    \begin{equation}
        \big(\mathcal{E}(W),Q \big) = -\big( \mathcal{R}, Q \big).
    \end{equation}
    We also remark that 
    \begin{equation}
        \big( \Lambda \vare, Q\big) = -\big( \vare, \Lambda Q \big) =0 \quad\text{and}\quad \big( \vec{m}\cdot\vec{M}\vare, Q \big) = -\Big( \frac{\sigma_s}{\lambda}-1\Big)\big( \vare, Q' \big).
    \end{equation}

    Thus, we have
    \begin{equation}
        \big(\mathcal{R}, Q \big) = \big( (W+F+\vare)^5 - (W+F)^5 - 5Q^4\vare,
        \, Q' \big) - \Big( \frac{\sigma_s}{\lambda} -1\Big)\big( \vare, Q' \big).
    \end{equation}
    Using this rewriting and the estimates \eqref{estimate_on_scalar_prod_R_Q}, \eqref{estimate_1_in_proof_of_mod_estimates}, we get
    \begin{equation}\label{estimate_on_second_part_in_estimate_on_scalar_prod_of_R_and_Q}
        \bigg\lvert b_s+2b^2 - \frac{4c_0}{\int Q}\lambda^{\frac{1}{2}}\sigma^{-\theta}\Big( \frac{3}{2}\frac{\lambda_s}{\lambda} + \theta \frac{\sigma_s}{\sigma}\Big) \bigg\rvert \lesssim s^{-4+2\beta}+  s^{-1}\big( \|\vare\|_{L^2_{sol}}+|\vec{m}| \big) + |\vec{m}|\|\vare\|_{L^2_{sol}}+\|\vare\|^2_{L^2_{sol}}.
    \end{equation}
    Thus
    \begin{equation}
        |b_s| \lesssim s^{-2} + |\vec{m}|(s^{-1} + \| \vare\|_{L^2_{sol}}) + \|\vare \|^2_{L^2_{sol}}.
    \end{equation}

    The previous estimate on $|b_s|$ and the estimates \eqref{estimate_on_first_component_of_m_in_proof_of_mod_estimates}, \eqref{estimate_on_second_component_of_m_in_proof_of_mod_estimates} and \eqref{estimates_on_norms_of_vares_in_z_in_decomp}, yields
    \begin{equation}
        \begin{split}
            |\vec{m}| \lesssim |\vec{m}|\big( s^{-1} + \|\vare \|_{L^2_{sol}} \big) + s^{-2} + 2\| \vare\|_{L^2_{sol}} + \|\vare \|^2_{L^2_{sol}}.
        \end{split}
    \end{equation}
    Thus, for $s_0 \gg1$, the estimate \eqref{estimate_on_m_mod_estim} holds true.
    Inserting \eqref{estimate_on_m_mod_estim} in the estimate on $|b_s|$ gives the estimate \eqref{estimation_on_bs_mod_estim}. 

    Combining \eqref{estimate_on_second_part_in_estimate_on_scalar_prod_of_R_and_Q} with \eqref{estimate_on_scalar_prod_R_Q}, we have
    \begin{equation}
        \big|\big( \mathcal{R},Q \big)\big| \lesssim s^{-4+2\beta} + |\vec{m}|\big( s^{-1}+\|\vare\|_{L^2_{sol}} \big) + s^{-1}\|\vare\|_{L^2_{sol}} + \|\vare\|^2_{L^2_{sol}}.
    \end{equation}
    Thus, inserting the previous estimate in \eqref{estimate_on_gs_with_prod_scal_R_Q}, yields
    \begin{equation}
       \lvert \lambda^2 g_s \rvert \lesssim s^{-4+2\beta} + s^{-1}\|\vare\|_{L^2_{sol}} + \|\vare\|^2_{L^2_{sol}}.
    \end{equation}
    
\end{proof}

\subsection{Bootstrap estimates}
Fix two smooth functions $\vphi$ and $\psi$ such that
\begin{equation}
    \vphi(y)=\begin{cases}
        e^y, \quad y<-1,\\
        1+y, \quad -\frac{1}{2}<y<\frac{1}{2},\\
        y,  \quad y>2
    \end{cases}
    \qquad \text{and} \qquad \vphi'(y)>0, \qquad \forall y \in \RR,
\end{equation}
\begin{equation}
    \psi(y) = \begin{cases}
        e^{2y}, \quad y<-1,\\
        1, \quad y>-\frac{1}{2}
    \end{cases}
    \qquad \text{and} \qquad \psi'(y)>0, \qquad \forall y \in \RR.
\end{equation}

Let $B>100$ to be fixed later. Define on $\RR$
\begin{equation}
    \vphi_B(y):= \vphi\Big(\frac{y}{B} \Big) \qquad \text{and}\qquad \psi_B(y):= \psi\Big(\frac{y}{B} \Big).
\end{equation}

Consider the following norm on $\vare$
\begin{equation}
    \mathcal{N}_B(s):= \Big(\int\vare_y^2(s,y)\psi_B(y) \,dy + \int\vare^2(s,y)\vphi_B(y) \,dy \Big)^{\frac{1}{2}}.
\end{equation}
From the definition of the $L^2_{loc}$-norm, it holds
\begin{equation}\label{relation_L2loc_with_N_B}
    \|\vare\|^2_{L^2_{sol}}+ \|\vare_y\|^2_{L^2_{sol}} + \int \vphi'_B\,\vare^2  \lesssim \mathcal{N}^2_B
    \qquad\text{and}\qquad \|\vare\|^2_{L^2_{sol}} \leq  C\,B\int \vphi'_B \vare^2 
\end{equation}

In addition of \eqref{BS1}, we will work under the following assumptions
\begin{equation}\tag{BS2}\label{BS2}
        \mathcal{N}_B(s) \;\leq\; s^{-\frac{5}{4}}\,, \qquad \|\vare(s)\|_{H^1} \;\leq\; \alpha^*. 
\end{equation}

\begin{remark}
    In particular, it allows us to control the full $L^{\infty}$ norm of $\vare$ in terms of $\alpha^*$
    \begin{equation}\label{L_infty_norm_of_vare_control}
        \|\vare\|_{L^{\infty}} \lesssim \|\vare\|_{H^1} \lesssim \delta(\alpha^*).
    \end{equation}
\end{remark}

\begin{lemma}[Consequences of bootstrap assumptions]
Under the assumptions \eqref{BS1} and \eqref{BS2} on $s(I)$ holds
\begin{equation}\label{conseq_of_BS_on_m_on_bs}
    |\vec{m}| \lesssim s^{-\frac{5}{4}} \qquad\text{and}\qquad  |b_s|\lesssim s^{-2},
\end{equation}
\begin{equation}\label{conseq_of_BS_on_gs}
    |g_s| \lesssim s^{2\beta-\frac{9}{4}}+s^{4\beta-4}.
\end{equation}
\begin{equation}\label{conseq_of_BS_on_g_and_hs}
    \lvert g(s) \rvert \lesssim \lvert g(s_0) \rvert  + s^{2\beta - 1-4\rho} \qquad \text{and}\qquad \lvert h_s(s) \rvert \lesssim s^{-\frac{\beta}{2}-1-4\rho},
\end{equation}
where $\rho$ satisfies \eqref{condition_on_rho}.
\end{lemma}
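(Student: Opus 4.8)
The plan is entirely a matter of feeding the bootstrap bound on $\mathcal N_B$ into the modulation estimates of Lemma \ref{lemma_mod_estimates_after_decomp} and into the identities of Lemma \ref{lemma_with_estimates_on_hs_and_gs}, and then integrating. The only input I would record first is that \eqref{BS2} combined with the first inequality in \eqref{relation_L2loc_with_N_B} yields $\|\vare(s)\|_{L^2_{sol}}\lesssim \mathcal N_B(s)\lesssim s^{-5/4}$ on $s(I)$.

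For \eqref{conseq_of_BS_on_m_on_bs} I would simply substitute this into \eqref{estimate_on_m_mod_estim} and \eqref{estimation_on_bs_mod_estim}, getting $|\vec m|\lesssim s^{-5/4}+s^{-2}\lesssim s^{-5/4}$ and $|b_s|\lesssim s^{-5/2}+s^{-2}\lesssim s^{-2}$ for $s_0$ large. For \eqref{conseq_of_BS_on_gs} I would start from \eqref{estimation_on_gs_mod_estim}, which after the same substitution reads $\lambda^2|g_s|\lesssim s^{-4+2\beta}+s^{-9/4}$, and then invoke the second line of \eqref{BS1} (which forces $\lambda(s)\ge\tfrac12 s^{-\beta}$, hence $\lambda^{-2}\lesssim s^{2\beta}$, once $s_0$ is large) to clear the prefactor and conclude $|g_s|\lesssim s^{4\beta-4}+s^{2\beta-9/4}$.

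The estimate on $g$ in \eqref{conseq_of_BS_on_g_and_hs} then comes from integrating $|g_s|$ on $[s_0,s]$. This is the one place where the precise choice of $\rho$ in \eqref{condition_on_rho} is used: since $\int_{s_0}^s s'^{a}\,ds'\lesssim s^{\max\{a+1,\,0\}}$ (with a harmless logarithm at $a=-1$), the two resulting powers are $s^{\max\{4\beta-3,0\}}$ and $s^{\max\{2\beta-5/4,0\}}$, and one checks that $\rho<\tfrac14(2-2\beta)$ makes $4\beta-3\le 2\beta-1-4\rho$, that $\rho<\tfrac1{16}$ (coming from the $\tfrac14$ entry of the minimum) makes $2\beta-5/4\le 2\beta-1-4\rho$, and that $\rho<\tfrac14(2\beta-1)$ makes $2\beta-1-4\rho>0$, the latter absorbing both the borderline logarithms (at $\beta=\tfrac58$ and $\beta=\tfrac34$) and the constant $|g(s_0)|$, which is kept explicit. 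For $h_s$ I would use \eqref{estimate_on_hs_with_g}, i.e. $|h_s|\lesssim \lambda^{1/2}|\vec m|+\lambda^{5/2}|g|$; with $\lambda\lesssim s^{-\beta}$, $|\vec m|\lesssim s^{-5/4}$ and $|g|\lesssim s^{2\beta-1-4\rho}$ (the last using that the initial parameters are chosen so that $g(s_0)$ is small, consistently with the normalization $l_0=0$ of \eqref{equation_with_l0}), this gives $|h_s|\lesssim s^{-\beta/2-5/4}+s^{-\beta/2-1-4\rho}$, and the first term is dominated by the second precisely because $\rho<\tfrac1{16}$.

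The main obstacle is not any genuinely hard estimate — every step is a direct substitution into results already proved — but the exponent bookkeeping: one must verify that the three inequalities packaged in \eqref{condition_on_rho} are exactly what is needed for all the powers produced by integrating $g_s$ and by the $h_s$ identity to fit under $s^{2\beta-1-4\rho}$ and $s^{-\beta/2-1-4\rho}$ uniformly for $\beta\in(\tfrac12,1)$, keeping an eye on the two borderline exponents and on the role of the smallness of $g(s_0)$.
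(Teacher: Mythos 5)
Your proof is correct and follows exactly the route the paper takes, which is to feed the bound $\|\vare\|_{L^2_{sol}}\lesssim\mathcal N_B\lesssim s^{-5/4}$ from \eqref{BS2} and \eqref{relation_L2loc_with_N_B} into Lemma~\ref{lemma_mod_estimates_after_decomp} to get \eqref{conseq_of_BS_on_m_on_bs} and \eqref{conseq_of_BS_on_gs}, integrate $|g_s|$ using the three inequalities hidden in \eqref{condition_on_rho}, and then plug the resulting bound on $g$ into \eqref{estimate_on_hs_with_g}. Your exponent bookkeeping (the case split at $\beta=3/4$ and $\beta=5/8$, and the verification that $\rho<\tfrac14(2-2\beta)$, $\rho<\tfrac1{16}$, $\rho<\tfrac14(2\beta-1)$ are precisely what close the estimates) is a correct expansion of the paper's terse ``integrating \eqref{conseq_of_BS_on_gs} and the choice of $\rho$''; you also correctly flag that dropping the $s^{-5\beta/2}|g(s_0)|$ contribution in the $h_s$ bound requires the initial parameters to make $g(s_0)$ small, a point the paper itself glosses over.
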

\begin{proof}
Combining \eqref{BS2} with the estimates in Lemma \ref{lemma_mod_estimates_after_decomp}, yields \eqref{conseq_of_BS_on_m_on_bs}, \eqref{conseq_of_BS_on_gs}.

The first estimate in \eqref{conseq_of_BS_on_g_and_hs} comes from integrating \eqref{conseq_of_BS_on_gs} and the choice of $\rho$.

From the estimate \eqref{estimate_on_hs_with_g}, the obtained estimate on $g$ and the definitions of $\rho$, we get the estimate on $h_s$.
\end{proof}
\section{Energy estimates}\label{section_energy_estimates}
Assume $\vare$ as in Lemma \ref{lemma_on_decomposition_around_Q} and 
$(\lambda,\sigma, b, \vare)$ satisfies \eqref{BS1}-\eqref{BS2} on some time interval $[s_0,s^*]$ with $s^*\geq s_0$.

We define the mixed energy-virial functional 
    \begin{equation*}
        \mathcal{F} \;=\; \int \Big[ (\partial_y \vare )^2 \psi_B + \vare^2 \varphi_B - \frac{1}{3}\Big( (W+F+\vare)^6-(W+F)^6-6(W+F)^5\vare \Big) \psi_B \Big],
    \end{equation*}
Set $j = \frac 52$ (other values of $j \in (2,3)$ are possible) and let $k$ satisfy
\begin{equation}\label{choice-of-k}
    k > \frac{2(1+j)}{1-\beta}.
\end{equation}
Define $\vphi_k$ a smooth non-decreasing function such that
\begin{equation*}
    \vphi_k(y)  : =\begin{cases}
        0,  \quad \text{for  } y\leq0,\\
        y^k,  \quad \text{for  } y \geq 1
    \end{cases} \qquad \text{and} \qquad y\vphi'_k = k\vphi_k \,\text{ sur } \, [0,1].
\end{equation*}
The choice of $k$ is explained later in the proof of the Proposition \ref{Proposition_Monot_formula}. We observe that $k\to +\infty$ corresponds to $\nu \to \frac 12^+$. We also have a lower bound $k > 4(1+j)$, since $\nu < 1$. 
We consider the functional
\begin{equation*}
        \mathcal{H} \;=\; s^j \, \mathcal{F} + \lambda^k \int \vare^2 \varphi_k.
    \end{equation*}
\begin{remark}
In the paper \cite{MMPIII}, the functional used at this stage is simply $s^{j}\mathcal{F}$.
However, differentiating this functional makes a delicate term of the form
$s^{-1}\int \vare^2\,\vphi_B$
appear because of the scaling term $-b \Lambda \vare$ in the equation \eqref{equation_of_eps_s}
of $\vare$.
The control of this term in \cite{MMPIII} is based on an additional bootstrap assumption that
is closed using separately a functional of the form $\lambda^k \int \vare^2 \varphi_k$
(with a fixed value of $k$, $k=10$).
In the present work, we add directly to the functional $\mathcal{H}$ the term $\lambda^{k}\int \vare^2\,\vphi_k$, which simplifies the proof and makes it more flexible.
Note that the term $\lambda^{k}\int \vare^2\,\vphi_k$ is essentially critical for scaling
since the $L^2$ norm is invariant by scaling and $\lambda^k \varphi_k = (\lambda y) ^k$
for $y\geq 1$.
\end{remark}

\vspace{0.2cm}
\begin{proposition}(Monotonicity formula)\label{Proposition_Monot_formula}\\
The following estimates hold on $[s_0,s^*]:$
\begin{itemize}
    \item Lyapounov control : There exists $0<\mu<1$ such that
    \begin{equation}\label{Lyapounov_control_of_H}
    \begin{split}
        & \frac{d}{ds}\big[\mathcal{H}\big]+s^{j}\frac{\mu}{8}\int\vphi'_B(\vare_y^2+\vare^2) +2s^{j}\int_{y<-\frac B2}\psi'_B\,(\partial^2_y\vare)^2\\
        & +s^{j}\int_{y<-\frac B2} \psi'_B\,(\partial_y \vare)^2 +\frac{\lambda^k}{16}\int \vphi'_k\,\big(\vare^2 + (\partial_y\vare)^2 \big)
        \lesssim \lambda^ks^{ - \frac 52} + s^{j-4}.
    \end{split}
    \end{equation}
    \item Coercivity of $\mathcal{F} $ :  
    \begin{equation}\label{coercivity_of_mathcal_F}
        \mathcal{F}\;\gtrsim\;\mathcal{N}^2_B.
    \end{equation}
\end{itemize}
\end{proposition}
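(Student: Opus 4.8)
The proof splits into the two assertions, proved independently; \eqref{coercivity_of_mathcal_F} is elementary, while \eqref{Lyapounov_control_of_H} is the heart of the matter.

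\textbf{Coercivity \eqref{coercivity_of_mathcal_F}.} I would expand the sextic term as
\[
(W+F+\vare)^6-(W+F)^6-6(W+F)^5\vare=15(W+F)^4\vare^2+\mathcal{O}\big(|\vare|^3(|W|+|F|+|\vare|)^2\big),
\]
and write $(W+F)^4=Q^4+\mathcal{O}\big((|b|+|r|)e^{-|y|}+|F|\big)$; by \eqref{estimates_on_r}, \eqref{estimates_on_norms_L_infty_of_F_with_exp}, \eqref{BS1}--\eqref{BS2} and $\|\vare\|_{L^\infty}\lesssim\delta(\alpha^*)$ this yields
\[
\mathcal{F}=\int\big[(\partial_y\vare)^2\psi_B+\vare^2\varphi_B-5Q^4\vare^2\psi_B\big]+\mathcal{O}\big(\delta(\alpha^*)\,\mathcal{N}_B^2\big).
\]
Since $\psi_B\equiv\varphi_B\equiv1$ on a fixed neighbourhood of $0$, the unperturbed quadratic form is handled by the standard localized coercivity of $\mathcal{L}$: one combines the second coercivity estimate for $\mathcal{L}$ with a cut-off and uses that the orthogonality conditions \eqref{orthogonal_conditions_in_lemma} kill the obstructions $(\vare,Q)$, $(\vare,\Lambda Q)$, $(\vare,y\Lambda Q)$, so that for $B$ large $\int[(\partial_y\vare)^2\psi_B+\vare^2\varphi_B-5Q^4\vare^2\psi_B]\gtrsim\mathcal{N}_B^2$; the $\mathcal{O}(\delta(\alpha^*)\mathcal{N}_B^2)$ remainder is absorbed for $\alpha^*$ small.

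\textbf{Lyapunov control \eqref{Lyapounov_control_of_H}.} I would differentiate $\mathcal{H}=s^j\mathcal{F}+\lambda^k\int\vare^2\varphi_k$ in $s$, substitute \eqref{equation_of_eps_s}, integrate by parts, and sort the terms. \emph{(a) Structural dissipation.} The part of \eqref{equation_of_eps_s} coming from $-\partial_y^2\vare+\vare-[(W+F+\vare)^5-(W+F)^5]$ yields, by the standard mixed energy--virial computation (as in \cite{MMPI,Martel-Pilod}, using \eqref{equation_of_Q} and \eqref{orthogonal_conditions_in_lemma}), the gains $-\mu s^j\int\varphi_B'(\vare_y^2+\vare^2)$ and $-s^j\int_{y<-B/2}\psi_B'\big(3(\partial_y^2\vare)^2+(\partial_y\vare)^2\big)$ in $s^j\tfrac{d}{ds}\mathcal{F}$ (up to negligible $\varphi_B''',\psi_B'''$ terms and exponentially localized terms controlled by orthogonality), and the gains $-\lambda^k\int\varphi_k'(3\vare_y^2+\vare^2)+\lambda^k\int\varphi_k'''\vare^2$ in $\lambda^k\tfrac{d}{ds}\int\vare^2\varphi_k$; the last is split at $y\sim k$, absorbed into $-\lambda^k\int\varphi_k'\vare^2$ where $\varphi_k'''\le\varphi_k'$ and bounded by $\lesssim_k\lambda^k\|\vare\|_{L^2_{sol}}^2\lesssim\lambda^ks^{-5/2}$ on the bounded complement. \emph{(b) Source terms} from $-\mathcal{E}(W)=\vec{m}\cdot\vec{M}Q-\mathcal{R}$ and from $\partial_s(W+F)$: using $\vec{M}Q\in\mathcal{Y}$, \eqref{estimate_on_scalar_prod_R_with_phi} and \eqref{estimate_on_norms_L_2_B_of_R} with $|b_s|\lesssim s^{-2}$, the exponential decay of $\mathcal{R}$, $|b_s|,|r_s|\lesssim s^{-2}$, and the modulation estimate $|\vec{m}|\lesssim\|\vare\|_{L^2_{sol}}+s^{-2}$ from \eqref{estimate_on_m_mod_estim}, each such term is, after Young's inequality, $\lesssim\tfrac{\mu}{100}s^j\int\varphi_B'\vare^2+C(s^{j-4}+\lambda^ks^{-5/2})$, the $\|\vare\|_{L^2_{sol}}$-part being absorbed in (a) via \eqref{relation_L2loc_with_N_B}. \emph{(c) Nonlinear terms} cubic or higher in $\vare$, or carrying a factor $bP_b$, $rR$, $F$: by $\|\vare\|_{L^\infty}\lesssim\delta(\alpha^*)$, \eqref{estimates_on_r} and \eqref{estimates_on_norms_L_infty_of_F_with_exp}, these are $\lesssim\delta(\alpha^*)\big(s^j\int\varphi_B'(\vare_y^2+\vare^2)+\lambda^k\int\varphi_k'\vare^2\big)+Cs^{j-4}$, absorbed for $\alpha^*$ small and $s_0$ large.

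\textbf{(d) Scaling terms — the crux.} Beyond (a)--(c), differentiating $s^j\mathcal{F}$ leaves the contribution of the last term $-b\Lambda\vare$ in \eqref{equation_of_eps_s} together with the over-counting term $js^{j-1}\mathcal{F}$. A direct computation gives for the kinetic and potential parts of $\mathcal{F}$
\[
-2bs^j\!\int\vare_y^2\psi_B+bs^j\!\int y\psi_B'\vare_y^2+bs^j\!\int y\varphi_B'\vare^2,
\]
while the sextic part contributes an exponentially localized term $\lesssim bs^j\|\vare\|_{L^2_{sol}}^2$; the term $bs^j\int y\psi_B'\vare_y^2\le0$ (as $\psi_B'$ is supported in $\{y<-B/2\}$) is favorable and the localized term is absorbed in (a) using $b\lesssim s^{-1}$. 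Since $\mathcal{F}\lesssim\mathcal{N}_B^2$, $b\lesssim s^{-1}$ and $y\varphi_B'\lesssim\varphi_B$ on $\{y>0\}$, it remains to control a term of size $\lesssim s^{j-1}\big(\int\vare_y^2\psi_B+\int\vare^2\varphi_B\big)$. For the $\vare_y$ part, $\psi_B\le1$ with $\psi_B\equiv1$ on $\{y>-B/2\}$ gives $s^{j-1}\int\vare_y^2\psi_B\lesssim Bs^{j-1}\big(\int\varphi_B'\vare_y^2+\int_{y<-B/2}\psi_B'\vare_y^2\big)$, which is $\ll s^j$ times the gains of (a) for $s_0$ large. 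For the $\vare^2$ part, set
\[
y_\star=\Big(\tfrac{32\,s^{\,j-1+\beta k}}{kB}\Big)^{1/(k-2)}\sim s^{(j-1+\beta k)/(k-2)},
\]
which satisfies $y_\star\ll s$ \emph{precisely because} $k$ obeys \eqref{choice-of-k} (so $(j-1+\beta k)/(k-2)<1$, with room). On $\{y<y_\star\}$ one has $\varphi_B\lesssim(B+y_\star)\varphi_B'$, hence $s^{j-1}\int_{y<y_\star}\vare^2\varphi_B\lesssim s^{j-1}(B+y_\star)\int\varphi_B'\vare^2\ll s^j\int\varphi_B'\vare^2$; on $\{y>y_\star\}$, by the choice of $y_\star$, $s^{j-1}\varphi_B(y)\le\tfrac1{32}\lambda^k\varphi_k'(y)$, so $s^{j-1}\int_{y>y_\star}\vare^2\varphi_B\le\tfrac1{32}\lambda^k\int\varphi_k'\vare^2$, absorbed in the $\varphi_k$-gain. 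Finally, $-b\Lambda\vare$ acting on the $\varphi_k$-functional gives $-2b\lambda^k\int\vare\,\Lambda\vare\,\varphi_k=b\lambda^k\int y\varphi_k'\vare^2=kb\lambda^k\int\vare^2\varphi_k$ (since $y\varphi_k'=k\varphi_k$ on all of $\{y\ge0\}$ by construction), which cancels \emph{exactly} the term $k\tfrac{\lambda_s}{\lambda}\lambda^k\int\vare^2\varphi_k=-kb\lambda^k\int\vare^2\varphi_k+km_1\lambda^k\int\vare^2\varphi_k$ produced by $\partial_s(\lambda^k)$, leaving only the integrable perturbation $km_1\lambda^k\int\vare^2\varphi_k$ with $|m_1|\lesssim s^{-5/4}$ by \eqref{conseq_of_BS_on_m_on_bs}. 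Collecting (a)--(d) yields \eqref{Lyapounov_control_of_H}.

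The main obstacle is precisely step (d): after differentiating the time weight $s^j$ and accounting for the scaling term $-b\Lambda\vare$, one is confronted with a quantity of order $\sim s^{j-1}\int\vare^2\varphi_B$ which is genuinely comparable to $s^j\mathcal{F}$ and is \emph{not} dominated by the virial dissipation, whose weight $\varphi_B'$ is far smaller than $\varphi_B$ at infinity. This is exactly why the scaling-critical term $\lambda^k\int\vare^2\varphi_k$ is added to $\mathcal{H}$ and why $k=k(\nu)$ must be taken large, with $k\to+\infty$ as $\nu\to\tfrac12^+$: only then does the splitting threshold $y_\star$ stay $\ll s$, so that the far part of the delicate term can be rerouted into the affordable gain $\lambda^k\int\varphi_k'\vare^2$. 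The other genuinely delicate (but by now routine) point is the extraction in (a) of the Weinstein virial coercivity from the nonlinear part of \eqref{equation_of_eps_s}, which again rests on the coercivity properties of $\mathcal{L}$ and the orthogonality conditions \eqref{orthogonal_conditions_in_lemma}.
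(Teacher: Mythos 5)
Your proposal follows the paper's own strategy closely: decompose $\frac{d}{ds}\mathcal{H}$ into the mixed energy--virial part $s^{-j}\frac{d}{ds}(s^j\mathcal{F})$ (the paper's estimate \eqref{mathcalF_estim_on_all_dds}, proved by the $\mathcal{F}_{(1)},\dots,\mathcal{F}_{(5)}$ splitting as in \cite{MMPI,MMPIII}) and the scaling-critical part $\lambda^{-k}\frac{d}{ds}(\lambda^k\int\vphi_k\vare^2)$ (the paper's Lemma~\ref{lemma_control_of_scaling_term}), then absorb the residual $s^{j-1}\int\vphi_B\vare^2$ by splitting the integral at a threshold $\ll s$ and rerouting the far part into the $\vphi_k$-gain. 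The one genuinely different choice is that threshold: you take $y_\star\sim s^{(j-1+\beta k)/(k-2)}$, tuned so that $s^{j-1}\vphi_B\leq\frac{1}{32}\lambda^k\vphi'_k$ for $y>y_\star$, which needs only $(j-1+\beta k)/(k-2)<1$, i.e.\ $k>(1+j)/(1-\beta)$; the paper simply takes $\eta=s^{(1+\beta)/2}$, which requires the stronger stated hypothesis \eqref{choice-of-k}. Both work, and both make transparent why $k\to\infty$ as $\nu\to\frac12^+$. Your coercivity argument is the same as the paper's.

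One piece of bookkeeping in step (d) should be cleaned up. When you compute the $-b\Lambda\vare$ contribution to $\frac{d}{ds}\big(\lambda^k\int\vphi_k\vare^2\big)$ and set it against the chain-rule term $k\frac{\lambda_s}{\lambda}\lambda^k\int\vphi_k\vare^2$, you are left with a residual $km_1\lambda^k\int\vphi_k\vare^2$, $m_1=\frac{\lambda_s}{\lambda}+b$, which you call an ``integrable perturbation''. But $\lambda^k\int\vphi_k\vare^2$ is not a priori bounded by the bootstrap (it is one of the two pieces of $\mathcal{H}$ you are trying to control), and since $\vphi_k\gg\vphi'_k$ at infinity the factor $s^{-5/4}$ does not make $km_1\lambda^k\int\vphi_k\vare^2$ absorbable into the $\vphi_k'$-gain. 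The resolution is that you have omitted the contribution of the companion term $\big(\frac{\lambda_s}{\lambda}+b\big)\Lambda\vare$ in \eqref{equation_of_eps_s}, which produces exactly $-km_1\lambda^k\int\vphi_k\vare^2$ and cancels your residual identically. This is how the paper organizes the computation: in $\mathcal{D}^{(1)}$ of Lemma~\ref{lemma_control_of_scaling_term} the coefficient of $\Lambda\vare$ is kept as the full $\frac{\lambda_s}{\lambda}$, so the scaling contribution is $-k\frac{\lambda_s}{\lambda}\lambda^k\int\vphi_k\vare^2$ and annihilates the chain-rule term outright, leaving no residual at all.
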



\begin{remark}
Before beginning the proof, we will present some technical results that will be useful in this section.

By the definitions of $\vphi,\psi$ and $\vphi_k$, it holds
\begin{equation}\label{estimates_between_psiB_and_phi_B}
\begin{cases}
    e^{-|y|} + e^{-|y|}\vphi'_k \lesssim \vphi'_B, \qquad \text{for } y>-\frac{B}{2},\\
    |y|\psi_B \lesssim B \vphi_B, \qquad \text{for } y \in \RR,\\
    \psi_B \lesssim B \vphi'_B, \qquad \text{for } y \in \RR,\\
    e^{-\frac{y}{B}}\,\vphi^2_k \leq B^{2k}\,e^{-k}\,k^{4k}\,\vphi'_k, \qquad \text{for } y >0.\\
\end{cases}
\end{equation}
The following relation will be useful throughout the proof
\begin{equation}\label{relation_on_partial_eps_and_order_5}
\begin{split}
    & \partial_y \Bigg( \frac{1}{6} \Big[(W+F+\vare)^6 - (W+F)^6 - 6 (W+F)^5 \vare \Big] \Bigg) \\
    & = \partial_y (W+F) \Big[ (W+F+\vare)^5 - (W+F)^5 - 5 (W+F)^4 \vare\Big] + \partial_y \vare \Big[ (W+F+\vare)^5 - (W+F)^5 \Big].
\end{split}
\end{equation}
\end{remark}

We will keep track of the order in which the constants are fixed throughout the proof. First, we fix $B$ to be large enough. Then, we set $\alpha^*$ be small enough and smaller than the one appearing in Lemma \ref{lemma_on_decomposition_around_Q}. Finally, we fix the value of $s_0$ to be large enough.

\vspace{0.4cm}
\begin{proof}[Proof of the Proposition \ref{Proposition_Monot_formula}]
First, we show that there exists $0<\mu<1$ such that
\begin{equation}\label{mathcalF_estim_on_all_dds}
\begin{split}
    s^{-j}\frac{d}{ds}\big[s^j \mathcal{F} \big]  + 2\int_{y<-\frac B2} \psi'_B\,(\partial^2_y \vare)^2
    +\frac\mu 8 \int \vphi'_B \big( \vare^2 + (\partial_y \vare)^2 \big) + \int_{y<-\frac B2} \psi'_B\,(\partial_y \vare)^2 \lesssim B s^{-1}\int \vphi_B \,\vare^2 + B^2 s^{-4}.
\end{split}
\end{equation}

Using the equation \eqref{equation_of_eps_s} of $\vare_s$, we remark that
\begin{equation}
    \begin{split}
        s^{-j}\frac{d}{ds}\big[s^j\, \mathcal{F}\big]  
        = & \frac{d}{ds}\big[\mathcal{F}\big]+ \frac js\mathcal{F} = 2\int G_B(\vare)\big( \vare_s - \frac{\lambda_s}{\lambda}\Lambda \vare \big) + 2\frac{\lambda_
        s}{\lambda}\int \Lambda \vare\, G_B(\vare) \\
        & + \frac{j}{s}\mathcal{F} - 2 \int \partial_s(W+F)\, \big[ (W+F+\vare)^5 - (W+F)^5 - 5(W+F)^4 \big] \psi_B \\
        & =:\mathcal{F}_{(1)}+\mathcal{F}_{(2)}+\mathcal{F}_{(3)}+\mathcal{F}_{(4)}+\mathcal{F}_{(5)},
    \end{split}
\end{equation}
where
\begin{equation}
\begin{split}
    \mathcal{F}_{(1)} &= 2 \int G_B(\vare) \, \partial_y\Big[ -\partial^2_{y}\vare + \vare - \big( (W+F+\vare)^5 - (W+F)^5 \big)\Big],\\
    \mathcal{F}_{(2)} &= -2\int G_B(\vare)\mathcal{E}(W),\\
    \mathcal{F}_{(3)}&= 2 \Big(\frac{\sigma_s}{\lambda}-1\Big) \int G_B(\vare) \partial_y \vare,\\
    \mathcal{F}_{(4)} &= 2\frac{\lambda_s}{\lambda}\int G_B(\vare)\, \Lambda \vare + \frac{j}{s}\mathcal{F}, \\
    \mathcal{F}_{(5)} &= -2 \int \psi_B\, (W+F)_s \Big[ (W+F+\vare)^5 - (W+F)^5 - 5(W+F)^4\Big]
\end{split}
\end{equation}
and
\begin{equation}
    G_B(\vare) = -\partial_y(\partial_y \vare \, \psi_B) + \vare \, \vphi_B - \big( (W+F+\vare)^5 - (W+F)^5\big)\psi_B.
\end{equation}

\textit{Estimate on $\mathcal{F}_{(1)}$:} 
We rewrite the term $\mathcal{F}_{(1)}$ in order to obtain a more manageable formula.
The following computations were previously made in Step 3 in the proof of Proposition 3 in \cite{MMPI}.

\vspace{0.4cm}
Note that
\begin{equation}
    \begin{split}
        \mathcal{F}_{(1)} 
        & = 2 \int \psi_B \,\partial_y\Bigl[- \partial^2_y \vare + \vare  - \bigl( (W+F+\vare)^5 - (W+F)^5 \bigr)  \Bigr]\Bigl[ - \partial^2_y \vare + \vare - \bigl( (W+F+\vare)^5 - (W+F)^5 \bigr)  \Bigr]\\
        & + 2 \int \partial_y \Big[ -\partial^2_y \vare+ \vare - \big( (W+F+\vare)^5 - (W+F)^5 \big) \Big]\,\Big[ -\partial_y \vare\, \psi'_B + \vare(\vphi -\psi_B) \Big] =: f^{(1)} + f^{(2)}.
    \end{split}
\end{equation}
By integration by parts, rewrite two terms as follows
\begin{equation}
\begin{split}
    f^{(1)} 
    = &- \int \psi'_B\Big[ - \partial^2_y \vare + \vare - \big( (W+F+\vare)^5 - (W+F)^5\big) \Big]^2\\
    = &-\int \psi'_B \Big[ (-\partial^2_B \vare)^2 - 2 \partial^2_y \vare \, \vare \, + \vare^2 \Big]  \\
    &- \int \psi'_B \Big(\big[ - \partial^2_y \vare + \vare - \big((W+F+\vare)^5 - (W+F)^5 \big)  \big]^2 - \big[ -\partial^2_y \vare + \vare \big]^2 \Big)\\
    = & - \int \psi'_B \Big[ (\partial^2_y \vare)^2 + 2 (\partial_y \vare)^2 \Big] + \int \vare^2 \big( \psi'''_B - \psi'_B \big)\\
    & - \int \psi'_B \Big(\big[ - \partial^2_y \vare + \vare - \big((W+F+\vare)^5 - (W+F)^5 \big)  \big]^2 - \big[ -\partial^2_y \vare + \vare \big]^2 \Big)
\end{split}
\end{equation}
and 
\begin{equation}
    \begin{split}
        f^{(2)} = 
        & -2 \int  \big( - \partial_y^2 \vare + \vare  \big)\big( -\psi'_B \, \partial_y^2\vare - \psi''_B\, \partial_y \vare + (\vphi_B - \psi_B)\, \partial_y \vare + (\vphi'_B - \psi'_B)\,\vare  \big)\\
        & -2 \int (\vphi_B - \psi_B)\, \partial_y \big[ (W+F+\vare)^5 - (W+F)^5 \big]\, \vare  +2\int  \psi'
        _B\, \partial_y \big[ (W+F+\vare)^5 - (W+F)^5 \big]\,\partial_y \vare\\
        & =: f^{(2,1)} + f^{(2,2)} + f^{(2,3)}.
    \end{split}
\end{equation}

Then, we rewrite the three terms as follows
\begin{equation}
    f^{(2,1)} = -2 \Big[\int \psi'_B \, (\partial^2_y \vare )^2  + \int (\partial_y \vare)^2 \,\big( \frac{3}{2}\vphi'_B - \frac{1}{2}\psi'_B - \frac{1}{2}\psi'''_B \big) + \int \vare^2 \big( \frac{1}{2}(\vphi'_B-\psi'_B)-\frac{1}{2}(\vphi'''_B - \psi'''_B) \big) \Big]. 
\end{equation}
By integration by parts and by \eqref{relation_on_partial_eps_and_order_5}, we have
\begin{equation}
\begin{split}
    f^{(2,2)} 
    = &2 \int (\vphi_B - \psi_B)\, \big[ (W+F+\vare)^5 - (W+F)^5 \big]\, \partial_y \vare\\
    &+ 2 \int (\vphi'_B -\psi'_B)\,\big[(W+F+\vare)^5 - (W+F)^5 \big]\, \vare\\
    = & -\frac{1}{3}\int (\vphi'_B - \psi'_B) \big[ (W+F+\vare)^6 - (W+F)^6 - 6 (W+F+\vare)^5 \,\vare \big]\\
    & -2 \int (\vphi_B - \psi_B)\, \partial_y (W+F)\, \big[ (W+F+\vare)^5 - (W+F)^5 - 5 (W+F)^4 \vare \big],
\end{split}
\end{equation}
\begin{equation}
    f^{(2,3)} = 10 \int \psi'_B \, \partial_y \vare \, \Big[ \partial_y (W+F)\, \big( (W+F+\vare)^4 - (W+F)^4 \big) + \partial_y \vare \, (W+F+\vare)^4 \Big].
\end{equation}
Combining those rewritings, we get
\begin{equation}
\begin{split}
    f^{(2)} 
    = & - \int \psi'_B \, \big[ (\partial^2_y \vare)^2 + 2 (\partial_y \vare)^2  \big] + \int \vare^2 (\psi'''_B - \psi'_B)\\
    & - \int \psi'_B \, \Big( \big[-\partial^2_y \vare + \vare - \big((W+F+\vare)^5 - (W+F)^5 \big) \big]^2 - \big[-\partial^2_y \vare + \vare \big]^2 \Big).
\end{split}
\end{equation}
Finally, we have the following rewriting of $\mathcal{F}_{(1)}$
\begin{equation}
    \begin{split}
        \mathcal{F}_{(1)} = 
        & -3 \int \psi'_B \, (\partial^2_y \vare)^2  - \int \big( 3\vphi'_B + \psi'_B - \psi'''_B \big)\, (\partial_y \vare)^2 - \int (\vphi'_B - \vphi'''_B)\, \vare^2\\
        & -\frac{1}{3}\int (\vphi'_B - \psi'_B) \big[ (W+F+\vare)^6 - (W+F)^6 - 6 (W+F+\vare)^5\,\vare \big]\\
        & -2 \int (\vphi_B - \psi_B)\, \partial_y(W+F)\, \big[ (W+F+\vare)^5 - (W+F)^5 - 5 (W+F)^4 \vare \big]\\
        & +10 \int \psi'_B \, \partial_y \vare\, \big[ \partial_y (W+F)\big( (W+F+\vare)^4 -(W+F)^4 \big) + \partial_y \vare \, (W+F+\vare)^4 \big]\\
        & - \int \psi'_B\, \Big( \big[ -\partial^2_y \vare + \vare - \big( (W+F+\vare)^5 -(W+F)^5  \big)\big]^2 - \big[ -\partial^2_y\vare + \vare\big]^2 \Big).
    \end{split}
\end{equation}
We will study $\mathcal{F}_{(1)}$ separately on two domains $\{ y > -\frac{B}{2}\}$ and $\{ y < -\frac{B}{2}\}$.

\textit{Estimate on $\mathcal{F}_{(1)}^{(>)}$:}
We rewrite the term on the domain $\{y>-\frac{B}{2}\}$ as
\begin{equation}
\begin{split}
    &\mathcal{F}_{(1)}^{(>)} = 
     - \int_{y>-\frac B2} \vphi'_B\, \big[ 3(\partial_y\,\vare)^2 + \vare^2 - 5Q^4\,\vare^2 + 20y Q'Q^3\vare^2 \big]\\
    & -\frac{1}{3}\int_{y>-\frac B2} \vphi'_B\, \big[ (W+F+\vare)^6 - (W+F)^6 - 6 (W+F+\vare)^5 \vare + 15 Q^4 \vare^2 \big]  \\
    & +20\int_{y>-\frac B2}  \vphi'_B \, y Q' Q^3 \vare^2- 2\int_{y>-\frac B2} (\vphi_B - \psi_B)\, \partial_y(W+F)\,\big[(W+F+\vare)^5 - (W+F)^5 -5(W+F)^4 \vare \big]\\
    & =:\mathcal{F}_{(1,1)}^{(>)} + \mathcal{F}_{(1,2)}^{(>)}+\mathcal{F}_{(1,3)}^{(>)}.
\end{split}
\end{equation}
For the first term, we rely on the coercivity property of the virial quadratic form under the orthogonality conditions \eqref{orthogonal_conditions_in_lemma}. It is a variant of \cite[Lemma 3.5]{Combet-Martel-17} and of \cite[Lemma 3.4]{MMPI}, based on \cite[Proposition 4]{Martel-Merle-00}. The proof of the following result can be found in the Appendix A.
\begin{lemma}[Localised virial estimate]\label{lemma-localized-virial-estimate}
    There exists $0<\mu<1$ such that 
    \begin{equation}\label{estimate-localized-virial}
        \int_{y>-\frac B2} \big[ 3(\partial_y\vare)^2 + \vare^2 - 5 Q^4 \vare^2 + 20y Q' Q^3 \vare^2\big] \geq \frac{\mu}{2}\int_{y>-\frac B2} \big( \vare^2 + (\partial_y \vare)^2 \big) - \frac{C}{\mu}\frac{1}{B^5}\int \vare^2\,e^{-\frac{|y|}{2}}.
    \end{equation}
\end{lemma}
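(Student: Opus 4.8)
The plan is to deduce this localised estimate from the classical (un-localised) coercivity of the virial quadratic form, by a cut-off argument. The starting point is Step~1: writing $\mathcal{B}(w):=\int\bigl[3(\partial_y w)^2+w^2-5Q^4w^2+20yQ'Q^3w^2\bigr]$, one uses the standard fact that $\mathcal{B}$ is coercive on the codimension-three subspace singled out by \eqref{orthogonal_conditions_in_lemma}: there is $\mu_0\in(0,1)$ with
\[
\mathcal{B}(w)\;\geq\;\mu_0\|w\|_{H^1}^2-\frac{1}{\mu_0}\bigl[(w,Q)^2+(w,\Lambda Q)^2+(w,y\Lambda Q)^2\bigr],\qquad w\in H^1 .
\]
This reduces to the spectral structure of $\mathcal{L}$ recalled above together with the explicit identities for $Q$ (e.g. $\int(Q')^2=\tfrac12\int Q^2$, $\int Q^6=\tfrac32\int Q^2$, $\mathcal{L}\Lambda Q=-2Q$), and is essentially \cite[Proposition 4]{Martel-Merle-00} / \cite[Lemma 3.4]{MMPI} / \cite[Lemma 3.5]{Combet-Martel-17}; I would simply take $\mu:=\mu_0$.

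Then, to pass from $\RR$ to $\{y>-\tfrac B2\}$, I would introduce a cut-off $\zeta_B$ equal to $1$ on $\{y>-\tfrac B2\}$ and decaying on $\{y<-\tfrac B2\}$, apply Step~1 to $w:=\zeta_B\vare$, and carefully track the errors generated: \emph{(i)} the commutator terms coming from $\partial_y\zeta_B$, which are supported near $y=-\tfrac B2$ and carry a factor $B^{-2}$, absorbed using the room left by the coefficient $3$ in front of $(\partial_y\vare)^2$; \emph{(ii)} the contribution of the potential $-5Q^4+20yQ'Q^3$ away from a fixed ball, which is $O(e^{-cB})$ in $L^\infty$ because $Q$ and its derivatives decay like $e^{-|y|}$; \emph{(iii)} the mismatch between the three orthogonality conditions satisfied by $\vare$ and those satisfied by $\zeta_B\vare$, which, again by the exponential decay of $Q$, $\Lambda Q$, $y\Lambda Q$, is controlled by a negative power of $B$ times $\int\vare^2e^{-|y|/2}$. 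Fixing $B$ large first, and then replacing $\mu_0$ by $\mu/2$ to absorb the constant loss inherent in the cut-off step, gives \eqref{estimate-localized-virial}.

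The hard part will be precisely this localisation bookkeeping: the integration region $\{y>-\tfrac B2\}$ and the orthogonality constraints on $\vare$ communicate only through the exponentially localised objects $Q,\Lambda Q,y\Lambda Q$ and $5Q^4,20yQ'Q^3$, so one must arrange that \emph{every} error term produced near $y=-\tfrac B2$ (from $\zeta_B$ and from the decay of the potential) genuinely fits under the budget $\tfrac{C}{\mu B^5}\int\vare^2e^{-|y|/2}$ on the right-hand side — which is exactly where that exponential localisation is exploited. A robust alternative, if the direct bookkeeping turns out awkward, would be to first obtain the qualitative coercivity by a compactness/contradiction argument (normalised sequence on $\{y>-B_n/2\}$ with $B_n\to+\infty$, weak $H^1$ limit, Step~1 together with weak lower semicontinuity and the fact that the weighted remainder forces the limit to vanish), and then turn it into the quantitative statement \eqref{estimate-localized-virial}.
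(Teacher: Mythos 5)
Your plan is exactly the paper's: cut off, invoke the un-localised coercivity of the virial quadratic form under the orthogonality conditions (the paper quotes \cite[Proposition 4]{Martel-Merle-00}, with only the $(v,y\Lambda Q)$ and $(v,Q)$ penalties; your three-penalty variant is also fine, since $\vare$ is orthogonal to $\Lambda Q$ as well), and absorb the errors. The genuine gap is in the \emph{direction} of the cut-off. You take $\zeta_B=1$ on $\{y>-\tfrac B2\}$ and decaying for $y<-\tfrac B2$, so $\tilde\vare:=\zeta_B\vare$ is an \emph{extension} of $\vare|_{\{y>-B/2\}}$ to the left. Applying the global coercivity to $\tilde\vare$ and then restricting back to $\{y>-\tfrac B2\}$, you are left, on the wrong side of the inequality, with
\begin{equation*}
(3-\mu)\int_{y<-\frac B2}(\partial_y\tilde\vare)^2+(1-\mu)\int_{y<-\frac B2}\tilde\vare^2 .
\end{equation*}
The first integral contains the bulk term $\int_{y<-B/2}\zeta_B^2(\partial_y\vare)^2$, which is \emph{not} a commutator term: it involves $(\partial_y\vare)^2$ on $\{y<-\tfrac B2\}$, a region where the statement offers no $(\partial_y\vare)^2$-budget at all (the error term on the right of \eqref{estimate-localized-virial} carries no derivative, and the ``room in the coefficient $3$'' lives on the other side of $-\tfrac B2$). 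The $\tilde\vare^2$-piece is likewise unabsorbable, since $e^{-|y|/2}=O(e^{-B/4})$ there while $\zeta_B^2$ is still order one. The paper's cut-off instead is supported \emph{inside} $\{y>-\tfrac B2\}$: $\zeta_B\equiv 0$ for $y<-\tfrac B2$, $\zeta_B\equiv 1$ for $y>-\tfrac B4$, so $\tilde\vare$ is a restriction of $\vare$; then $\int\tilde\vare^2\le\int_{y>-B/2}\vare^2$, $\int(\partial_y\tilde\vare)^2\le\int_{y>-B/2}(\partial_y\vare)^2+\tfrac{C}{B^2}\int_{-B/2}^{-B/4}\vare^2$, all errors are localised in the transition region $\{-\tfrac B2<y<-\tfrac B4\}\subset\{y>-\tfrac B2\}$, and the $\tfrac{C}{B^2}$ commutator error is swallowed into the slack $\tfrac\mu2\int_{y>-B/2}\vare^2$ once $B$ is large. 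After flipping the cut-off this way, the rest of your bookkeeping (orthogonality mismatch via exponential decay of $Q,\Lambda Q,y\Lambda Q$, tail of the potential, fix $B$ and halve $\mu$) is the paper's argument.
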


\vspace{0.4cm}
The virial estimate yields the following estimate
\begin{equation}
    \mathcal{F}_{(1,1)}^{(>)} + \frac \mu 2  \int_{y>-\frac B2} \vphi'_B\, \big( \vare^2 + (\partial_y \vare)^2 \big) \lesssim \frac{1}{B^4}\int \vphi'_B \, \vare^2.
\end{equation}

To estimate the second term, we use the following relation
\begin{equation}
\begin{split}
    & (W+F+\vare)^6+(W+F)^6-6(W+F+\vare)^5\vare + 15Q^4 \vare^2 \\
    & =\big[ (W+F+\vare)^6 - (W+F)^6 - 6(W+F)^5\vare -15(W+F)^4\vare^2 \big]\\
    & 
    -6\vare \big[ (W+F+\vare)^5-(W+F)^5 - 5(W+F)^4\vare \big] - 15\vare^2\big[ (W+F)^4 - Q^4\big].
\end{split}
\end{equation}
Thus, we get 
\begin{equation}
    \begin{split}
        \mathcal{F}_{(1,2)}^{(>)} \lesssim \int_{y>-\frac B2} \vphi'_B  \big[ |W+F|^3|\vare|^3  + \vare^6 + \vare^2 \big( |bP_b+rR + F|^4 + Q^4|bP_b +rR + F| \big) \big]
        \leq \frac{\mu}{2^{10}}\int_{y>-\frac B2} \vphi'_B \vare^2.
    \end{split}
\end{equation}

We rewrite the third term as follows
\begin{equation}
    \begin{split}
        \mathcal{F}_{(1,3)}^{(>)} =
        & -2\int_{y>-\frac B2} (\vphi_B-\psi_B)\,\partial_y(W+F)\,\big[ (W+F+\vare)^5 - (W+F)^5 - 5(W+F)^4\vare -10(W+F)^3\vare^2 \big]\\
        & +20\int_{y>-\frac B2} (y \vphi'_B - \vphi_B + \psi_B) Q'Q^3 \vare^2\\
        & + 20\int_{y>-\frac B2} (\vphi_B-\psi_B)\,\vare^2\,\big[ Q'Q^3 - \partial_y(W+F)(W+F)^3\big] =: \mathcal{F}_{(1,3,1)}^{(>)} + \mathcal{F}_{(1,3,2)}^{(>)} + \mathcal{F}_{(1,3,3)}^{(>)}.
    \end{split}
\end{equation}

The first term in the rewriting of $\mathcal{F}_{(1,3)}^{(>)}$ is estimated using \eqref{estimates_on_norms_L_infty_of_F}, \eqref{estimate_on_Q_b_and_its_derivs}, \eqref{estimates_on_r}, \eqref{L_infty_norm_of_vare_control} and \eqref{estimates_between_psiB_and_phi_B}
\begin{equation}
\begin{split}
    \mathcal{F}_{(1,3,1)}^{(>)} 
    & \lesssim \int_{y>-\frac B2} |\vphi_B-\psi_B||\partial_y(W+F)| \big( |W+F|^2 |\vare|^3 + |\vare|^5 \big)\\
    & \lesssim\delta(\alpha^*)\int_{y>-\frac B2} e^{-|y|}|\vphi_B-1|\vare^2 \lesssim \delta(\alpha^*)\int_{y>-\frac B2} \vphi'_B \vare^2.
\end{split}
\end{equation}

We remark that $\supp(y\vphi'_B -\vphi_B+\psi_B)\cap\{y>-\frac{B}{2}\} = \{y>\frac{B}{2} \}$ and on this domain $y\vphi'_B -\vphi_B +\psi_B \lesssim B\vphi'_B$. Therefore
\begin{equation}
    \mathcal{F}_{(1,3,2)}^{(>)} = 20\int_{y>\frac{B}{2}} Q'Q^3 \vare^2 \lesssim Be^{-2B}\int \vphi'_B \vare^2.
\end{equation}

Using the next rewriting
\begin{equation}
    (W+F)^3\,\partial_y (W+F)-Q^3 Q' = \big( (W+F)^3-Q^3 \big)\,\partial_y(W+F) + Q^3\,\partial_y(W-Q+F)
\end{equation}
and by \eqref{estimates_on_norms_L_infty_of_F}, we get
\begin{equation}
\begin{split}
    \mathcal{F}_{(1,3,3)}^{(>)} 
    & \lesssim \int_{y>-\frac B2} \Big[ |\partial_y (W+F)|\, \big(|bP_b+rR+F|^3 + Q^2|bP_b+ rR+F| \big) + Q^3 |bP_b + rR+F|\Big]\,\vare^2\,|\vphi_B-1|\\
    & \lesssim s^{-1}\int_{y>-\frac B2} |\vphi_B-1|\,\vare^2 \lesssim s^{-1}\int_{y>-\frac B2} \vphi_B \,\vare^2.
\end{split}
\end{equation}

Therefore
\begin{equation}
    \mathcal{F}_{(1,3)}^{(>)} \leq C s^{-1}\int_{y>-\frac B2} \vphi_B\,\vare^2 + \frac{\mu}{2^{10}}\int \vphi_B'\,\vare^2.
\end{equation}

Thus, we conclude the estimate 
\begin{equation}\label{mathcalF_estim_onF1_on_right}
    \mathcal{F}_{(1)}^{(>)} + \frac\mu 4 \int_{y>-\frac B2} \vphi'_B\, \big( \vare^2 + (\partial_y \vare)^2 \big) \lesssim s^{-1}\int \vphi_B\, \vare^2.
\end{equation}

\textit{Estimate on $\mathcal{F}_{(1)}^{(<)}$:}

By \eqref{estimates_between_psiB_and_phi_B}, we have
\begin{equation}
    \begin{split}
        & \mathcal{F}_{(1)}^{(<)} + 3 \int_{y<-\frac B2} \psi'_B \,(\partial^2_y \vare)^2 + \frac{1}{2}\int_{y<-\frac B2} \vphi'_B\, \big( (\partial_y \vare)^2 + \vare^2\big)  + \int_{y<-\frac B2} \psi'_B \, (\partial_y \vare)^2\\
        & \lesssim \int_{y < -\frac{B}{2}} \vphi'_B \big( |W+F|^4 \vare^2 + |\vare|^6 \big) + B \int_{y<-\frac B2} \vphi'_B \, |\partial_y (W+F)|\, \big( |W+F|^3 \vare^2 + |\vare|^5\big)\\
        & + \int_{y<-\frac B2} \psi'_B \, |\partial_y \vare|\, \Big[ |\partial_y (W+F)|\, \big( |W+F|^3 |\vare| + |\vare|^4 \big) + |\partial_y \vare|\big( |W+F|^4 + |\vare|^4 \big)\Big]\\
        & +\int_{y<-\frac B2} \psi'_B \Big( \big[2(-\partial^2_y \vare + \vare) - (W+F+\vare)^5 + (W+F)^5\big]\, \big[ (W+F)^5 - (W+F+\vare)^5 \big] \Big)\\
        & := \mathcal{F}_{(1,1),<} +\mathcal{F}_{(1,2),<} +\mathcal{F}_{(1,3),<} +\mathcal{F}_{(1,4),<}.
    \end{split}
\end{equation}

For the first term, by $\eqref{BS1}$, $\eqref{estimate_on_Q_b_and_its_derivs}$, $\eqref{L_infty_norm_of_vare_control}$, we get
\begin{equation}
\begin{split}
    \mathcal{F}_{(1,1)}^{(<)}  \lesssim \int_{y<-\frac B2} \vphi'_B \,(|W|^4 + |F|^4)\, \vare^2 + \int_{y<-\frac B2} \vphi'_B \,\vare^6 \lesssim (e^{-2B} + s^{-2\beta}+\delta(\alpha^*))\int_{y<-\frac B2} \vphi'_B\,\vare^2.
\end{split}
\end{equation}

Using $\eqref{estimate_on_Q_b_and_its_derivs}$,$\eqref{BS1}$ and choosing firstly $B$ large enough, then $s_0$ small enough, we get 
\begin{equation}
    \begin{split}
        \mathcal{F}_{(1,2)}^{(<)} 
        & \lesssim B \int_{y<-\frac B2} \vphi'_B\, |\partial_y (W+F)|\, \big( |W+F|^3\vare^2 +|\vare|^5 \big)\\
        & \lesssim B\big(e^{-\frac{B}{2}} + s^{-\frac{\beta}{2}} \big) \int_{y<-\frac B2} \vphi'_B \vare^2 \leq \frac{\mu}{2^{10}}\int_{y<-\frac B2} \vphi'_B \vare^2.
    \end{split}
\end{equation}
By the Young inequality and interpolation, combining with \eqref{estimates_between_psiB_and_phi_B}, \eqref{L_infty_norm_of_vare_control}, \eqref{estimate_on_L_infty_of_dj_F} yields
\begin{equation}
    \begin{split}
        \mathcal{F}_{(1,3)}^{(<)} 
        & \lesssim\int_{y<-\frac B2} \psi'_B\big((\partial_y \vare)^2 + \vare^2 \big) \big( |\partial_y (W+F)|^4 + |W+F|^4\big) + \int_{y<-\frac B2} \psi'_B (\partial_y \vare)^2 \vare^4 + \int_{y<-\frac B2} \psi'_B \vare^6\\
        & \lesssim \big( e^{-2B} + s^{-2\beta} + \delta(\alpha^*)\big)\int_{y<-\frac B2} \vphi'_B \big( \vare^2 + (\partial_y \vare)^2\big).
    \end{split}
\end{equation}
Applying the Young inequality, we get
\begin{equation}
    \begin{split}
        \mathcal{F}_{(1,4)}^{(<)} 
        & \lesssim  \int_{y<-\frac B2} \psi'_B \, (\partial^2_y \vare)^2\,\vare^4 + \int_{y<-\frac B2} \psi'_B \,|W+F|^4\,\big((\partial^2_y \vare )^2 + \vare^2\big)\\
        & + \int_{y<-\frac B2} \psi'_B\,\vare^6 + \int_{y<-\frac B2} \psi'_B\,|W+F|^8\,\vare^2 + \int_{y<-\frac B2} \psi'_B \vare^{10}\\
        & \leq \frac{\mu}{2^{10}}\int_{y<-\frac B2} \vphi'_B\, \vare^2 + \frac{\mu}{2^{10}}\int_{y<-\frac B2} \psi'_B \,(\partial_y^2 \vare)^2.
\end{split}
\end{equation}
Therefore
\begin{equation}\label{mathcalF_estim_onF1_on_left}
    \mathcal{F}_{(1)}^{(<)}  \leq - 2 \int_{y<-\frac B2} \psi'_B \,(\partial^2_y \vare)^2 - \frac{\mu}{4}\int_{y<-\frac B2} \vphi'_B\, \big( \vare^2+(\partial_y \vare)^2 \big) -
    \int_{y<-\frac B2} \psi'_B \, (\partial_y \vare)^2.
\end{equation}

\textit{Estimate on $\mathcal{F}_{(2)}$:} 
We write 
\begin{equation}
    \mathcal{F}_{(2)} = 2 \int \vec{m}\cdot \vec{M}Q \, G_B(\vare) - 2 \int \mathcal{R} \, G_B(\vare) = : \mathcal{F}_{(2,1)} + \mathcal{F}_{(2,2)}.
\end{equation}
For the first term in the rewriting, we have
\begin{equation}
    \mathcal{F}_{(2,1)} = 
    2 \Big( \frac{\lambda_s}{\lambda} - b \Big)\int \Lambda Q \, G_B(\vare) + 2 \Big( \frac{\sigma_s}{\lambda}-1 \Big)\int \partial_y Q \, G_B(\vare).
\end{equation}

We will use the following rewriting of the terms
\begin{equation}
    \begin{split}
        \int \Lambda Q \, G_B(\vare) 
        = & \int \psi_B \mathcal{L}(\Lambda Q) \vare  - \int \psi'_B \partial_y(\Lambda Q) \vare + \int (\vphi_B-\psi_B) \Lambda Q \vare - 5 \int \psi_B \Lambda Q \big[(W+F)^4 - Q^4 \big] \vare  \\
        & - \int \psi_B \Lambda Q \big[ (W+F+\vare)^5 - (W+F)^5 - 5(W+F)^4 \vare \big].
    \end{split}
\end{equation}
and since $\mathcal{L}(\partial_y Q) = 0$, we also get
\begin{equation}
    \begin{split}
        \int \partial_y Q\, G_B(\vare) 
        =  & -\int \psi'_B \partial^2_y Q \vare - \int (\vphi_B - \psi_B) \partial_y Q \vare - 5 \int \psi_B \partial_y Q \big[(W+F)^4 - Q^4 \big] \vare \\
        &  - \int \psi_B \partial_y Q \big[ (W+F+\vare)^5 - (W+F)^5 - 5(W+F)^4 \vare \big].
    \end{split}
\end{equation}

Then we estimate the following quantities as follows.

Using $\mathcal{L}(\Lambda Q)  = -2 Q$ and the orthogonality $\int \vare Q = 0$
\begin{equation}
    \Big\lvert \int \mathcal{L}(\Lambda Q)\, \psi_B \, \vare  \Big\rvert = 2 \Big\lvert \int (1-\psi_B)\, Q \, \vare \Big\rvert \lesssim \int_{y < -\frac{B}{2}}\, e^{- \frac{|y|}{2}}\, |\vare| \lesssim e^{-\frac{B}{8}}\|\vare\|_{L^2_{sol}}.
\end{equation}
Since $\supp \psi'_B  = \{ y < -\frac{B}{2} \} $, we get
\begin{equation}
    \Big\lvert \int \psi'_B \, \partial^2_y Q \, \vare \Big\rvert + \Big\lvert \int \psi'_B \, \partial_y \Lambda Q \, \vare \Big\rvert \lesssim \int_{y < -\frac{B}{2}} e^{-\frac{|y|}{2}} |\vare| \lesssim e^{-\frac{B}{8}} \|\vare \|_{L^2_{sol}}.
\end{equation}
From the orthogonalities $\int y \Lambda Q \, \vare  = \int y \partial_y Q \, \vare = 0$, we have
\begin{equation}
        \Big\lvert \int (\vphi_B - \psi_B)\, (\Lambda Q+ \partial_y Q)\, \vare \Big\rvert  = \Big\lvert \int (\vphi_B - \psi_B - \frac{y}{B}) (\Lambda Q + \partial_y Q)\, \vare \Big\rvert  \lesssim e^{-\frac{B}{8}}\|\vare \|_{L^2_{sol}}.
\end{equation}
By \eqref{estimates_on_norms_L_infty_of_F_with_exp}, we get
\begin{equation}
\begin{split}
    \Big\lvert \int \psi_B \big( (W+F)^4 - Q^4 \big)(\Lambda Q + \partial_y Q)\, \vare \Big\rvert \lesssim  s^{-1}\int \psi_B\,  e^{-\frac{3|y|}{4}}\,|\vare| + \int \psi_B \, e^{-\frac{3|y|}{4}} |F|\, |\vare| \lesssim s^{-1}\|\vare \|_{L^2_{sol}}
\end{split}
\end{equation}
By \eqref{estimates_on_norms_L_infty_of_F_with_exp}, \eqref{estimates_on_r}, \eqref{BS1} and \eqref{L_infty_norm_of_vare_control}, we have
\begin{equation}
    \begin{split}
        & \int \psi_B \, \big( |\Lambda Q| + |\partial_y Q| \big)\, \big| (W+F+\vare)^5 - (W+F)^5 - 5 (W+F)^4 \vare \big|\\ 
        & \lesssim \int \psi_B \, e^{-\frac{3|y|}{4}}\, \big( |W+F|^3 \vare^2 + |\vare|^5 \big)
        \lesssim B\int \vphi'_B\,\vare^2 + \delta(\alpha^*)B\int \vphi'_B\,\vare^2.
    \end{split}
\end{equation}

Thus, by \eqref{estimate_on_m_mod_estim}, the Young inequality and \eqref{relation_L2loc_with_N_B}, yields 
\begin{equation}
    \begin{split}
        \mathcal{F}_{(2,1)}
        & \lesssim \big( e^{-\frac{B}{8}} + s^{-1}\big)\|\vare\|^2_{L^2_{sol}} + s^{-4} + s^{-3}\|\vare\|_{L^2_{sol}} + s^{-\frac{5}{4}}B\int \vphi'_B \vare^2\\
        & \lesssim s^{-4} + B \big( e^{-\frac{B}{8}} + s^{-1}\big)\int \vphi'_B\,\vare^2 \leq Cs^{-4} + \frac{\mu}{2^{10}}\int \vphi'_B\,\vare^2.
    \end{split}
\end{equation}

For the second term in the rewriting, we have
\begin{equation}
    \mathcal{F}_{(2,2)}= -2 \int \psi_B\, \partial_y \mathcal{R}\, \partial_y \vare - 2 \int \vphi_B\, \mathcal{R}\, \vare + 2 \int \psi_B \mathcal{R} \big( (W+F+\vare)^5 - (W+F)^5 \big).
\end{equation}
And we estimate these quantities as follows, using the relation \eqref{estimates_between_psiB_and_phi_B}
\begin{equation}
\begin{split}
    \int \psi_B\, \partial_y \mathcal{R}\, \partial_y \vare 
    & \lesssim \Big( \int (\partial_y \mathcal{R})^2\, e^{\frac{y}{B}} \Big)^{\frac{1}{2}}\Big( \int \psi^2_B \,(\partial_y \vare)^2\, e^{-\frac{y}{B}} \Big)^{\frac{1}{2}} \lesssim B^{\frac{1}{2}} \|\partial_y \mathcal{R}\|_{L^2_B} \Big( \int \vphi'_B (\partial_y \vare)^2 \Big)^{\frac{1}{2}}\\
    & \lesssim \frac{1}{2B}\int \vphi'_B (\partial_y \vare)^2 + B^2\| \partial_y \mathcal{R}\|^2_{L^2_B}.
\end{split}
\end{equation}
Similarly, we get
\begin{equation}
    \int \vphi_B \, \mathcal{R}\, \vare \lesssim \frac{1}{2B}\int \vphi'_B \vare^2 + B^2\|\mathcal{R}\|^2_{L^2_B}.
\end{equation}
For the last term in the rewriting of $\mathcal{F}_{(2,2)}$, we have
\begin{equation}
    \int \psi_B \,\mathcal{R}\, \big( (W+F+\vare)^5 - (W+F)^5\big) \lesssim \int \psi_B \,|\mathcal{R}|\big( |W+F|^4 |\vare| + |\vare|^5 \big).
\end{equation}
Since $\|W\|_{L^{\infty}}, \|F\|_{L^{\infty}}$ are finite quantities, it follows
\begin{equation}
     \int \psi_B\, |\mathcal{R}|\,\big( |W|^4 + |F|^4 \big) |\vare| \lesssim \frac{1}{2B}\int \vphi'_B \vare^2 +  B^2 \|\mathcal{R}\|^2_{L^2_B}.
\end{equation}
By \eqref{estimates_between_psiB_and_phi_B} and the Young inequality, we get
\begin{equation}
\begin{split}
    \int \psi_B\, |\mathcal{R}|\, |\vare|^5 
    & \lesssim \delta(\alpha^*)\Big(\int \psi^2_B \,\vare^2 \,e^{-\frac{y}{B}} \Big)^{\frac{1}{2}} \Big( \int R^2 \, e^{\frac{y}{B}}\Big)^{\frac{1}
    {2}}
    \lesssim \delta(\alpha^*)B^{\frac{1}{2}}\Big(\int \vphi'_B \,\vare^2 \Big)^{\frac{1}{2}} \|\mathcal{R}\|_{L^2_B}\\
    & \lesssim \delta(\alpha^*)\Big[\frac{1}{B}\int \vphi'_B\, \vare^2 + B^2 \|\mathcal{R}\|^2_{L^2_B} \Big].
\end{split}
\end{equation}
Therefore, by \eqref{estimate_on_norms_L_2_B_of_R}, we get
\begin{equation}
\begin{split}
    \mathcal{F}_{(2,2)} 
    & \lesssim \frac{1}{B}\int \vphi'_B \,\big( \vare^2 + (\partial_y\vare)^2 \big) + B^2 \big( |b_s|+s^{-1}|\vec{m}| + s^{-2} \big)^2\\
    & \lesssim \frac{1}{B}\int \vphi'_B \,\big( \vare^2+(\partial_y \vare)^2 \big) + B^2 s^{-4}.
\end{split}
\end{equation}

Combining those estimates and using \eqref{conseq_of_BS_on_m_on_bs}, \eqref{relation_L2loc_with_N_B}, \eqref{BS2}, we have
\begin{equation}\label{mathcalF_estim_onF2}
     \mathcal{F}_{(2)}  \leq C B^2 \, s^{-4} + \frac{\mu}{2^{5}}\int \vphi'_B \big( \vare^2 + (\partial_y \vare)^2 \big).
\end{equation}

\textit{Estimate on $\mathcal{F}_{(3)}$:}

In order to estimate $\mathcal{F}_{(3)}$, we write
\begin{equation}
    -\int \partial_y \vare \, \partial_y (\psi_B\, \partial_y \vare) = -\frac{1}{2}\int \psi'_B (\partial_y \vare)^2 \lesssim \int \vphi'_B\, (\partial_y \vare)^2  \qquad \qquad \int \vphi_B\, \partial_y \vare \, \vare = -\frac{1}{2}\int \vphi'_B \, \vare^2.
\end{equation}

Combining the rewriting \eqref{relation_on_partial_eps_and_order_5} with the interpolation, using that $f \in L^{\infty}_t H^2_x$ and \eqref{estimates_between_psiB_and_phi_B}, \eqref{L_infty_norm_of_vare_control}, we get
\begin{equation}
\begin{split}
    - \int \partial_y \vare \, \Big( \psi_B \big[ (W+F+\vare)^5 - (W+F)^5 \big] \Big) 
    &\lesssim  B \int \vphi'_B \Big[ \vare^2 \big( |W+F|^4 + |\partial_y (W+F)|^4 \big) + \vare^6 \Big] \\
    &\lesssim B \, \delta(\alpha^*) \int \vphi'_B \, \vare^2.
\end{split}
\end{equation}

Thus, we conclude that 
\begin{equation}\label{mathcalF_estim_onF3}
     \mathcal{F}_{(3)} \lesssim s^{-\frac{5}{4}} \, B\, \int \vphi'_B \big( \vare^2 + (\partial_y\vare)^2 \big) \leq \frac{\mu}{2^{5}}\int \vphi'_B \big( \vare^2 + (\partial_y\vare)^2 \big).
\end{equation}

\textit{Estimate on $\mathcal{F}_{(4)}$:}

By integration by parts, we get
\begin{equation}
    \int \Lambda \vare\; \partial_y(\psi_B\, \partial_y \vare) = -\int \psi_B\, (\partial_y \vare)^2 + \frac{1}{2}\int y\psi'_B\, (\partial_y \vare)^2,
\end{equation}
\begin{equation}
    \int \Lambda \vare\; \vare \;\vphi_B = -\frac{1}{2}\int y\vphi'_B \,\vare^2.
\end{equation}
Using \eqref{relation_on_partial_eps_and_order_5} yields
\begin{equation}
\begin{split}
\int \Lambda \vare\, \big( (W+F+\vare)^5 - (W+F)^5 \big) 
& = \frac{1}{6}\int (2\psi_B-y\psi'_B)\big[ (W+F+\vare)^6 - (W+F)^6 - 6 (W+F)^5\vare\big]\\
& -\int \psi_B \Lambda(W+F)\big[ (W+F+\vare)^5 - (W+F)^5 -5 (W+F)^4 \vare\big].
\end{split}
\end{equation}

Thus, it yields the following rewriting
\begin{equation}
    \begin{split}
        & \mathcal{F}_{(4)} + \frac{\lambda_s}{\lambda}\int y \vphi'_B\vare^2 - \frac{j}{s}\int \vphi_B \vare^2\\
        & = \Big( 2 \frac{\lambda_s}{\lambda}+\frac{j}{s} \Big)\int \psi_B\, \Big[ (\partial_y\vare)^2
        - \frac{1}{3}\big( (W+F+\vare)^6 - (W+F)^6 - 6 (W+F)^5\vare\big) \Big]\\
        & -\frac{\lambda_s}{\lambda}\int y \psi'_B \Big[ (\partial_y\vare)^2
        - \frac{1}{3}\big( (W+F+\vare)^6 - (W+F)^6 - 6 (W+F)^5\vare\big) \Big]\\
        & +2\frac{\lambda_s}{\lambda}\int \psi_B\,\Lambda(W+F)\, \big( (W+F+\vare)^5 - (W+F)^5 -5 (W+F)^4 \vare\big) \\
        & =: \mathcal{F}_{(4,1)}+\mathcal{F}_{(4,2)}+\mathcal{F}_{(4,3)}.
    \end{split}
\end{equation}
By \eqref{estimates_between_psiB_and_phi_B}, \eqref{estimate_on_m_mod_estim} and \eqref{L_infty_norm_of_vare_control}, we get
\begin{equation}
\begin{split}
    \mathcal{F}_{(4,1)}+\mathcal{F}_{(4,2)}& \lesssim s^{-1}B\int \vphi'_B \big( (\partial_y \vare)^2 + |W+F|^4\vare^2 + \vare^6 \big)\\
    & \lesssim s^{-1}B\int \vphi'_B\big( \vare
    ^2 + (\partial_y \vare)^2\big) .
\end{split}
\end{equation}
For the third term we write
\begin{equation}
    \begin{split}
        \mathcal{F}_{(4,3)} & \lesssim s^{-1}\int \psi_B|W+F|\big( |W+F|^3\vare^2 + |\vare|^5 \big)+s^{-1}\int  y\, \psi_B\, |\partial_y (W+F)|\big( |W+F|^3\vare^2 + |\vare|^5 \big)\\
        & \lesssim s^{-1}B\int \vphi'_B \vare^2 + s^{-1}B\int y 
        \,\vphi'_B \vare^2.
    \end{split}
\end{equation}
Therefore, 
\begin{equation}\label{mathcalF_estim_onF4}
    \mathcal{F}_{(4)} \leq  Cs^{-1}B  \int y\, \vphi'_B\vare^2 + C\frac{j}{s}\int \vphi_B \vare^2 + \frac{\mu}{2^{5}}\int \vphi'_B\big( \vare
    ^2 + (\partial_y \vare)^2\big) .
\end{equation}

\textit{Estimate on $\mathcal{F}_{(5)}$:}
From the definition, we get the following decomposition
\begin{equation}
    \begin{split}
         \mathcal{F}_{(5)} = 
        &-2\,  \int \psi_B\, \big(b_s\,\partial_b(Q_b) + r_s\, R \big)\,\big[ (W+F+\vare)^5-(W+F)^5 - 5 (W+F)^4\vare \big] \\
        & -2\, \int \psi_B\, \partial_s F\, \big[ (W+F+\vare)^5 - (W+F)^5 - 5 (W+F)^4 \vare \big]  =: \mathcal{F}_{(5,1)} + \mathcal{F}_{(5,2)}.
    \end{split}
\end{equation}
In order to estimate the first term, we write
\begin{equation}
    |\partial_b\, Q_b|  = |\chi_b P+\gamma|b|^{\gamma}y\,\chi'(|b|^{\gamma}y)| \lesssim \gamma.
\end{equation}
Using \eqref{estimation_on_bs_mod_estim}, \eqref{estimates_between_psiB_and_phi_B}, \eqref{L_infty_norm_of_vare_control},\eqref{estimate_on_r_s} and \eqref{estimates_on_norms_L_infty_of_F}, we get
\begin{equation}
    \mathcal{F}_{(5,1)} \lesssim s^{-2}\,(B+\delta(\alpha^*))\, \int \vphi'_B \vare^2 .
\end{equation}

For the second term, we write
\begin{equation}
\begin{split}
    & \partial_s\big(F(s,y)\big) = \partial_s \big[ \lambda^{\frac{1}{2}}(s)\,f\big(\tau(s),\lambda(s)y +\sigma(s)\big) \big]\\
    & =\frac{1}{2}\frac{\lambda_s}{\lambda} F(s,y) + \lambda^{\frac{7}{2}}(s)(\partial_1 f)\big(\tau(s), \lambda(s)y+\sigma(s)\big)+ \Big( \frac{\lambda_s}{\lambda}y +\frac{\sigma_s}{\lambda}\Big) \partial_y \big(F(s,y) \big).
\end{split}
\end{equation}

Since $\|f\|_{L^{\infty}_t H^s} \lesssim \delta(x_0^{-1})$, the quantity $\|\partial_t f(t)\|_{L^{\infty}}$ is bounded uniformly on $[0,T)$ 
\begin{equation}
    \|\partial_t f(t)\|_{L^{\infty}} \lesssim \|\partial^3_x f(t) \|_{L^{\infty}} + \|f(t)\|^4_{L^{\infty}} \|\partial_xf(t)\|_{L^{\infty}}.
\end{equation}

By \eqref{estimates_between_psiB_and_phi_B}, \eqref{conseq_of_BS_on_m_on_bs}, \eqref{estimate_on_L_infty_of_dj_F}, we get
\begin{equation}
\begin{split}
    \mathcal{F}_{(5,2)} & \lesssim \int \psi_B  \Big| \frac{\lambda_s}{\lambda}F + \lambda^{\frac{7}{2}} \big(\partial_1 f \big)(\tau(s), \lambda y +\sigma)+ \frac{\sigma_s}{\lambda} \partial_y F \Big|\, \big(|W+F|^3\vare^2 + |\vare|^5 \big)\\
    & + \frac{\lambda_s}{\lambda}\int \psi_B |y|\, |\partial_y F|\, \big(|W+F|^3\vare^2 + |\vare|^5 \big)\\
    & \lesssim
    B s^{-\frac{3\beta}{2}}\int \vphi'_B \vare^2 + Bs^{-\frac{3\beta}{2}} \,s^{-1}\int \vphi_B \vare^2.
\end{split}
\end{equation}
Therefore, we conclude
\begin{equation}\label{mathcalF_estim_onF5}
    \begin{split}
        \mathcal{F}_{(5)} \leq Cs^{-1}\int \vphi_B\,\vare^2 + \frac{\mu}{2^{5}}\int \vphi'_B\,\vare^2.
    \end{split}
\end{equation}

\vspace{0.4cm}
Finally, combining the estimates \eqref{mathcalF_estim_onF1_on_right}, \eqref{mathcalF_estim_onF1_on_left}, \eqref{mathcalF_estim_onF2}, \eqref{mathcalF_estim_onF3}, \eqref{mathcalF_estim_onF4}, \eqref{mathcalF_estim_onF5}, yields the announced estimate \eqref{mathcalF_estim_on_all_dds}.

The presence of the positive term $s^{-1}\int \varphi_B \vare^2$ in the right-hand side
of \eqref{mathcalF_estim_on_all_dds} justifies the introduction of the second
term in the definition of $\mathcal{H}$.

\vspace{0.4cm}
An estimate of the derivative of 
the second term in $\mathcal{H}$ is given by the following lemma
\begin{lemma}(Dynamical control of the tail on the right)\label{lemma_control_of_scaling_term}\\
    For all $s\in[s_0,s^*]$ holds
\begin{equation}\label{scaling_term_estimate}
    \lambda^{-k}\frac{d}{ds}\Big[ \lambda^k \int \vphi_k \vare^2 \Big] + \frac{1}{8}\int (\vare^2_y+\vare^2)\vphi'_k \lesssim s^{-\frac{5}{2}}.
\end{equation}
\end{lemma}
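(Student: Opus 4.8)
The plan is to differentiate, insert the equation \eqref{equation_of_eps_s} for $\partial_s\vare$, and exploit that $\vphi_k$ satisfies $y\vphi_k'=k\vphi_k$ on all of $\RR$ (which in particular forces $\vphi_k\equiv0$ on $(-\infty,0]$). Writing
\begin{equation*}
\lambda^{-k}\frac{d}{ds}\Big[\lambda^k\!\int\vphi_k\vare^2\Big]=k\frac{\lambda_s}{\lambda}\!\int\vphi_k\vare^2+2\!\int\vphi_k\,\vare\,\partial_s\vare,
\end{equation*}
and using $\big(\tfrac{\lambda_s}{\lambda}+b\big)\Lambda\vare-b\Lambda\vare=\tfrac{\lambda_s}{\lambda}\Lambda\vare$, the only scaling contribution is $2\tfrac{\lambda_s}{\lambda}\int\vphi_k\,\vare\,\Lambda\vare$; since
\begin{equation*}
\int\vphi_k\,\vare\,\Lambda\vare=\tfrac12\!\int\vphi_k\vare^2-\tfrac12\!\int\big(\vphi_k+y\vphi_k'\big)\vare^2=-\tfrac12\!\int y\vphi_k'\,\vare^2=-\tfrac k2\!\int\vphi_k\vare^2,
\end{equation*}
this term equals $-k\tfrac{\lambda_s}{\lambda}\int\vphi_k\vare^2$ and cancels the term produced by $\tfrac{d}{ds}\lambda^k$ \emph{exactly} — this is precisely the scaling invariance of $\lambda^k\vphi_k(y)\vare^2\,dy=(x-\sigma)^k\vare^2\,dy$, and is what fixes the power $\lambda^k$. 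It then remains to bound the dispersive term $2\int\vphi_k\vare\,\partial_y[-\partial_y^2\vare+\vare-((W+F+\vare)^5-(W+F)^5)]$, the term $-2\int\vphi_k\vare\,\mathcal{E}(W)$, and $2\big(\tfrac{\sigma_s}{\lambda}-1\big)\int\vphi_k\vare\,\partial_y\vare$ (all integrations by parts below are legitimate since $\vare(s)$ lies in the weighted space, so boundary terms vanish).

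For the dispersive term I would integrate by parts. The linear part $-\partial_y^2\vare+\vare$ yields $-3\int\vphi_k'\vare_y^2-\int\vphi_k'\vare^2+\int\vphi_k'''\vare^2$; since $\vphi_k'''=(k-1)(k-2)y^{-2}\vphi_k'$ on $\{y\ge1\}$, one has $\vphi_k'''\le\tfrac14\vphi_k'$ for $y\ge y_1(k)$, while on $\{0\le y\le y_1\}$ the function $\vphi_k'''$ is bounded, so $\int\vphi_k'''\vare^2\le\tfrac14\int\vphi_k'\vare^2+C_k\|\vare\|_{L^2_{sol}}^2$ and $\|\vare\|_{L^2_{sol}}^2\le\mathcal N_B^2\lesssim s^{-5/2}$ by \eqref{BS2}. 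Expanding $(W+F+\vare)^5-(W+F)^5=5(W+F)^4\vare+O(|W+F|^3\vare^2+|\vare|^5)$ and using \eqref{relation_on_partial_eps_and_order_5} produces terms carrying either the weight $\vphi_k'$ — against $(W+F)^4\vare^2$, $|W+F|^{4-\ell}|\vare|^{2+\ell}$ ($\ell\ge1$), $\vare^6$ — or the full weight $\vphi_k$, against $\partial_y(W+F)\,[(W+F+\vare)^5-(W+F)^5-5(W+F)^4\vare]$ and its higher-order analogues. On $\supp\vphi_k=[0,+\infty)$ one has $\lambda y+\sigma>\sigma\gg1$ by \eqref{assumption_on_sigma_on_t}, hence $|W|,|\partial_yW|\lesssim e^{-|y|/2}$, $|r|\lesssim s^{-1}$, $|F|\lesssim s^{-1}$, $|\partial_yF|\lesssim s^{-2}$, together with $\|\vare\|_{L^\infty}\lesssim\delta(\alpha^*)$; combined with $\vphi_k'e^{-c|y|}\lesssim_ke^{-c|y|/2}$ and, for $s_0\gg1$, $s^{-1}\int\vphi_k'\vare^2\le\tfrac1{100}\int\vphi_k'\vare^2$, every term carrying a $\vphi_k'$ weight or an exponentially small factor from $W$ is bounded by $Cs^{-5/2}+\tfrac1{100}\int\vphi_k'(\vare^2+\vare_y^2)$.

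The technical heart of the argument — and the reason $k$ must be taken as in \eqref{choice-of-k} — is the control of the remaining contributions carrying the bare weight $\vphi_k$ and a factor $F$ or $\partial_yF$ on $\{y\gtrsim1\}$, the model term being $\int\vphi_k\,\partial_yF\,|F|^3\vare^2$. For these I would use the sharp pointwise decay of Lemma \ref{lemma_on_the_decaying_tail}, namely $|F(s,y)|\lesssim|c_0|\lambda^{1/2}(\lambda y+\sigma)^{-\theta}$ and $|\partial_yF(s,y)|\lesssim|c_0|\lambda^{3/2}(\lambda y+\sigma)^{-\theta-1}$, so that $\lambda^k\vphi_k(y)\,|\partial_yF||F|^3\lesssim(x-\sigma)^k\lambda^3x^{-4\theta-1}$ with $x=\lambda y+\sigma$; with the homogeneities $\lambda\sim s^{-\beta}$, $\sigma\sim s^{1-\beta}$ and the identity $\theta(1-\beta)=1-\tfrac\beta2$, and after integrating against $\vare^2\,dy$ using the control of $\mathcal N_B$ and of $\lambda^k\int\vphi_k\vare^2$ furnished by $\mathcal H$, the inequality $k>\tfrac{2(1+j)}{1-\beta}$ is exactly what makes the resulting bound $\lesssim s^{-5/2}$ (this is the $\nu$-dependent analogue of the corresponding step in \cite{MMPIII}). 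Finally, $-2\int\vphi_k\vare\,\mathcal E(W)=2\int\vphi_k\vare\,(\vec m\cdot\vec M Q)-2\int\vphi_k\vare\,\mathcal R$: the first piece is $\lesssim_k|\vec m|\int e^{-|y|/2}|\vare|\lesssim s^{-5/4}\|\vare\|_{L^2_{sol}}\lesssim s^{-5/2}$ by \eqref{conseq_of_BS_on_m_on_bs}; for the second, $\big|2\int\vphi_k\vare\,\mathcal R\big|\le\tfrac1{16}\int\vphi_k'\vare^2+C\int\tfrac{\vphi_k^2}{\vphi_k'}\mathcal R^2$, and since $\tfrac{\vphi_k^2}{\vphi_k'}e^{-y/B}$ is bounded on $\RR$ one gets $\int\tfrac{\vphi_k^2}{\vphi_k'}\mathcal R^2\lesssim_{k,B}\|\mathcal R\|_{L^2_B}^2\lesssim(|b_s|+s^{-1}|\vec m|+s^{-2})^2\lesssim s^{-4}$ by \eqref{estimate_on_norms_L_2_B_of_R} and \eqref{conseq_of_BS_on_m_on_bs}. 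The last term is $-(\tfrac{\sigma_s}{\lambda}-1)\int\vphi_k'\vare^2\lesssim s^{-5/4}\int\vphi_k'\vare^2\le\tfrac1{16}\int\vphi_k'\vare^2$ for $s_0\gg1$. Collecting all pieces and keeping $\tfrac18\int(\vare_y^2+\vare^2)\vphi_k'$ on the left gives \eqref{scaling_term_estimate}; the single genuinely delicate step is, as flagged, the far-right nonlinear interaction with the tail, where the $\nu$-dependent choice of $k$ is used.
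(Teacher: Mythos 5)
The overall skeleton of your proof is sound and matches the paper's: you expand $\lambda^{-k}\frac{d}{ds}[\lambda^k\int\vphi_k\vare^2]$, observe the exact cancellation of the scaling term via $\int\vphi_k\vare\Lambda\vare=-\frac k2\int\vphi_k\vare^2$ (this is correct and is indeed the scale-invariance underlying the choice of power $\lambda^k$), and you handle the linear dispersive part, the $\mathcal{E}(W)$ contribution, and the translation term in the same way as the paper (the $\vphi_k'''\lesssim\vphi_k'$ absorption, the $\int e^{-|y|/2}\vphi_k^2$ pairing against $\vec m\cdot\vec M Q$, and the $L^2_B$ Cauchy--Schwarz against $\mathcal{R}$ are all fine).

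However, the part you single out as the "technical heart" — the far-right nonlinear interaction carrying the bare weight $\vphi_k$ and a factor of $F$ or $\partial_yF$ — is where your argument goes wrong, in two ways. First, you assert that the $\nu$-dependent condition $k>\tfrac{2(1+j)}{1-\beta}$ from \eqref{choice-of-k} is what is needed to close this lemma. It is not: that condition is used only in the subsequent \emph{assembly} step for $\mathcal{H}$ (the absorption of the term $s^{j-1}\int\vphi_B\vare^2$ coming from $\tfrac{d}{ds}[s^j\mathcal{F}]$ into the positive term $\lambda^k\int\vphi_k'\vare^2$ supplied by this lemma — see the paragraph around \eqref{control_of_delicate_term_1}). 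The present lemma holds for any choice of the smooth cut-off $\vphi_k$ with $y\vphi'_k=k\vphi_k$. Second, and more seriously, you propose to bound the model term by integrating against $\vare^2$ "using the control of $\mathcal N_B$ \emph{and of $\lambda^k\int\vphi_k\vare^2$ furnished by $\mathcal H$}"; this is circular. The bootstrap hypotheses \eqref{BS1}--\eqref{BS2}--\eqref{BS3} do not contain a bound on $\lambda^k\int\vphi_k\vare^2$, and the whole point of this lemma is to produce the input that controls $\mathcal{H}$; you cannot assume that control while proving it.

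The correct (and simpler) treatment, which you are missing, is the pointwise gain $y^{-1}$ furnished by the chain rule on the support of $\vphi_k$: for $y>0$ one has $\lambda y+\sigma\geq\sigma$ and hence
\begin{equation*}
|\partial_y F(s,y)|=\lambda^{3/2}\,\bigl|(\partial_x f)(\tau(s),\lambda y+\sigma)\bigr|
\lesssim \lambda^{3/2}(\lambda y+\sigma)^{-\theta-1}
\lesssim \lambda^{1/2}\sigma^{-\theta}\,y^{-1},
\end{equation*}
so that, using $y\vphi'_k=k\vphi_k$ and $\lambda^{1/2}\sigma^{-\theta}\lesssim s^{-1}$ (from $\theta(1-\beta)=1-\tfrac\beta2$),
\begin{equation*}
\int\vphi_k\,|\partial_y F|\,\bigl(|W+F|^3\vare^2+|\vare|^5\bigr)
\lesssim \lambda^{1/2}\sigma^{-\theta}\int\vphi_k\,y^{-1}\vare^2
\lesssim s^{-1}\,\tfrac1k\int\vphi'_k\,\vare^2,
\end{equation*}
which is absorbed by the left-hand term $\tfrac18\int\vphi'_k(\vare^2+\vare_y^2)$ for $s_0$ large, with no restriction on $k$ beyond being fixed and finite. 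All other far-right nonlinear contributions carry at least a factor $\vphi'_k$ together with $\|F\|_{L^\infty}^4\lesssim s^{-2\beta}$ or $\|\vare\|_{L^\infty}^4\lesssim\delta(\alpha^*)$ (or an exponential factor from $W$), and are likewise absorbable. This is what the paper does, and it is the step your proposal needs to replace.
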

\begin{proof}[Proof of the Lemma \ref{lemma_control_of_scaling_term}]
Using the equation of $\vare_s$, we have
\begin{equation}
        \frac{1}{2}\frac{d}{ds}\int \vphi_k\,\vare^2 = \int \vphi_k \,\vare\,\vare_s = \mathcal{D}^{(1)} + \mathcal{D}^{(2)} + \mathcal{D}^{(3)} + \mathcal{D}^{(4)}
\end{equation}
where
\begin{equation}
    \begin{split}
        \mathcal{D}^{(1)} = \int \vphi_k \vare \,\partial_y\big[ -\partial_y^2 \vare + \vare
        + \frac{\lambda_s}{\lambda}\Lambda \vare \big] ,
        \qquad \qquad
        \mathcal{D}^{(2)} = \Big( \frac{\sigma_s}{\lambda}-1 \Big)\int \vphi_k\, \partial_y \vare\, \vare ,\\
        \mathcal{D}^{(3)} = - \int \vphi_k\,\vare \, \mathcal{E}(W) , \qquad \qquad
        \mathcal{D}^{(4)} = -\int \vphi_k \,\vare\, \partial_y\big[ (W+F+\vare)^5 - (W+F)^5 \big].
    \end{split}
\end{equation}

In order to treat the first term, we observe that, $\vphi'''_k \leq \frac{1}{2}\vphi'_k$ for $y$ large  enough and $\vphi'_k$ is defined such that the $y\vphi'_k = k \vphi_k$ on $\RR$. By interpolation, we get
\begin{equation}
    \begin{split}
        \mathcal{D}^{(1)}  & = -\frac{k}{2}\frac{\lambda_s}{\lambda}\int \vphi_k\,\vare^2 - \frac{1}{2}\int \vphi'_k\,\vare^2 - \frac{3}{2}\int \vphi'_k\,(\partial_y \vare)^2 + \frac{1}{2}\int \vphi'''_k \,\vare^2\\
        & \leq
        -\frac{k}{2} \frac{\lambda_s}{\lambda}\int \vphi_k \,\vare^2 - \frac{1}{4}\int \vphi'_k \, \big( \vare^2 + (\partial_y \vare)^2 \big) + C \int \vphi'_B\,\vare^2.
    \end{split}
\end{equation}

By integration by parts, the second term can be rewritten as follows
\begin{equation}
    \mathcal{D}^{(2)} = -\frac{1}{2}\Big( \frac{\sigma_s}{\lambda}-1 \Big)\int \vphi'_k \,\vare^2.
\end{equation}
The third term is given by
\begin{equation}
    \mathcal{D}^{(3)} = \int \vphi_k\, \vec{m} \cdot \vec{M}Q\,\vare - \int \vphi_k \,\mathcal{R}\, \vare.
\end{equation}
By Cauchy-Schwarz inequality and by \eqref{conseq_of_BS_on_m_on_bs} with \eqref{BS2}, we have
\begin{equation}
    \int \vphi_k\, \vec{m} \cdot \vec{M}Q\,\vare \lesssim |\vec{m}| \Big( \int e^{-\frac{|y|}{2}}\, \vphi^2_k \Big)^{\frac{1}{2}} \|\vare\|_{L^2_{sol}} \leq C(k)\,s^{-\frac{5}{2}}.
\end{equation}
Using \eqref{estimates_between_psiB_and_phi_B}, the Young inequality and \eqref{estimate_on_norms_L_2_B_of_R}, we get
\begin{equation}
    \begin{split}
        \int \vphi_k\,\mathcal{R}\,\vare &\lesssim \|\mathcal{R}\|_{L^2_B} \Big( \int \vphi^2_k \, e^{-\frac{y}{B}} \vare^2\Big)^{\frac{1}{2}}\lesssim B^{2k+2}\|\mathcal{R}\|^2_{L^2_B} + \frac{1}{B^2}e^{-k}k^{4k}\,\int \vphi'_k\,\vare^2\\ 
        & \leq CB^{3k}s^{-4} + \frac{1}{2^{10}}\int \vphi'_k\, \vare^2
        \leq \frac{1}{2^{10}} \Big(s^{-\frac{5}{2}} + \int \vphi'_k\, \vare^2 \Big).
    \end{split}
\end{equation}
We emphasize that this is the last time that we need to adjust the constant $B$.
Thus $B$ is now fixed.

We rewrite the nonlinear term as follows
\begin{equation}
\begin{split}
    \mathcal{D}^{(4)}  & = \int \vphi'_k\, \vare\, \big( (W+F+\vare)^5 - (W+F)^5 \big) + \int \vphi_k \,\partial_y \vare\, \big( (W+F+\vare)^5 - (W+F)^5 \big)\\
    & \lesssim \int \vphi_k'\, |W+F|^4 \, \vare^2 + \int \vphi'_k \,\vare^6 + \int \vphi_k\,|\partial_y(W+F)|\,\big( |W+F|^3\vare^2 + |\vare|^5 \big).
\end{split}
\end{equation}

By \eqref{estimates_on_norms_L_infty_of_F}, \eqref{estimates_on_r} and \eqref{estimate_on_Q_b_and_its_derivs}, we have
\begin{equation}
    \int \vphi_k'\, |W+F|^4 \, \vare^2 + \int \vphi'_k \,\vare^6 \lesssim \int \vphi'_B \, \vare^2 + \big( s^{-2\beta} + \delta(\alpha^*) \big)\int \vphi'_k \,\vare^2.
\end{equation}

By exponential decay of $Q_b$ and $R$ on the right and by \eqref{estimates_between_psiB_and_phi_B}, we get
\begin{equation}
    \begin{split}
        & \int \vphi_k\,|\partial_y(W)|\,\big( |W+F|^3\vare^2 + |\vare|^5 \big) \lesssim \int (\vphi'_k + r \vphi'_k)\big( |W+F|^3 \vare^2 + |\vare|^5 \big)\\
        & \lesssim \int \vphi'_B\, \vare^2 + \big( s^{-1} + s^{-\frac{3\beta}{2}} +\delta(\alpha^*)\big)\int\vphi'_k \,\vare^2.
    \end{split}
\end{equation}

From \eqref{lemma_on_the_decaying_tail} yields 
\begin{equation}
    |\partial_y F| = |\lambda^{\frac{3}{2}} \partial_2 f(s, \lambda y +\sigma)|  \lesssim \lambda^{\frac{3}{2}}|\lambda y + \sigma|^{-\theta - 1} \lesssim \lambda^{\frac{1}{2}}\sigma^{-\theta}y^{-1}.
\end{equation}
Thus, using the above estimate, \eqref{relation_on_partial_eps_and_order_5} and \eqref{BS2}, we get
\begin{equation}
\begin{split}
    \int\vphi_k\, |\partial_y(F)|\, \big( |W+F|^3 |\vare|^3 + |\vare|^5\big) \lesssim \lambda^{\frac{1}{2}} \sigma^{-\theta} \int \vphi_k\, y^{-1}\, \big( |W+F|^3 \vare^2 + |\vare|^5\big) \lesssim s^{-1}\int \vphi'_k \, \vare^2.
\end{split}
\end{equation}

Combining the estimates on $\mathcal{D}^{(1)},\mathcal{D}^{(2)},\mathcal{D}^{(3)},\mathcal{D}^{(4)}$ with \eqref{relation_L2loc_with_N_B} and \eqref{BS2}, we conclude the proof of the estimate \eqref{scaling_term_estimate}.
\end{proof}

\textit{End of the estimate of the functional $\mathcal{H}$.}\\
Combining the estimate in Proposition \ref{Proposition_Monot_formula} on the mixed energy-virial functional and in Lemma \ref{lemma_control_of_scaling_term} on the scaling term yields
    \begin{equation}\label{control_of_H_in_proof_1}
        \begin{split}
            & \frac{d}{ds}\big[\mathcal{H}\big]+s^j \Big[ 2\int_{y<-\frac B2 } \psi'_B (\partial^2_y \vare)^2 + \int_{y<-\frac B2}\psi'_B (\partial_y \vare)^2 + \frac{\mu}{4} \int \vphi'_B \big( \vare^2 + (\partial_y \vare)^2\big) \Big] + \frac{\lambda^k}{8}\int \vphi'_k \big( \vare^2 + (\partial_y \vare)^2\big)\\  
            & \lesssim s^{j-4} + s^{j-1}\int \vphi_B\,\vare^2+ \lambda^ks^{-\frac 52} .
        \end{split}
    \end{equation}

Now, to finish the proof of \eqref{relation_on_partial_eps_and_order_5}, 
we show that the term with a positive sign
$$ s^{j-1} \int \vphi_B\,\vare^2$$
on the right-hand side of \eqref{control_of_H_in_proof_1},
can be controlled, for $s_0$ large enough, using the following two terms
$$   s^j\int\vphi'_B\,\vare^2 \qquad \text{and} \qquad \lambda^k\int \vphi'_k \,\vare^2 $$
which appear on the left-hand side of \eqref{control_of_H_in_proof_1}.

Using $\varphi_B(y)=y/B$ for $y>2B$ and taking $\eta>2B$, we decompose
\begin{equation}\label{control_of_delicate_term_1}
        s^{j-1} \int \vphi_B\,\vare^2 
        =s^{j-1}\frac 1B \int_{y>\eta} y \vare^2  + s^{j-1}\int_{y<\eta} \vphi_B \vare^2
        \lesssim s^{j-1} \eta^{-(k-2)}\int_{y>\eta} \vare^2 y^{k-1} + s^{j-1}\int_{y<\eta} \vphi_B  \vare^2 .
\end{equation}
We estimate the first term on the right-hand side of \eqref{control_of_delicate_term_1}.
Choosing $\eta = s^{\frac{1+\beta}2}$ ($\eta>2B$ for $s_0$ large), we have
\begin{equation}
    s^{j-1} \eta^{-(k-2)}\int_{y>\eta} \vare^2 y^{k-1} 
    = s^{-(1-j+(k-2)\frac{\beta+1}{2})}\int_{y>\eta} \vare^2 y^{k-1} .
\end{equation}
Note that
\begin{equation}
     1-j+(k-2)\frac{\beta+1}{2}  
     = \beta k + (1-\beta)
     +\left[ k \frac{1-\beta}{2}- (1+j)\right].
\end{equation}
Thus, the condition on $k$ in \eqref{choice-of-k} implies that
\begin{equation}
     1-j+(k-2)\frac{\beta+1}{2}  
     > \beta k + (1-\beta ) .
\end{equation}
Therefore, using also the definition of the function $\vphi_k$
and \eqref{BS1}, choosing $s_0$ sufficiently large, we obtain
\begin{equation}
    s^{j-1} \eta^{-(k-2)}\int_{y>\eta} \vare^2 y^{k-1} 
    \leq   s_0^{- (1-\beta)} s^{-\beta k} \frac 1k \int_{y>\eta} \varphi_k' \vare^2 
    \leq \frac{\lambda^k}{2^{10}}\int \vphi'_k \vare^2  \;.
\end{equation}
We remark that the choice of $s_0$ depends on $k$ and goes to infinity with $k$.

\vspace{0.15cm}
Now, we estimate the second term on the right-hand side of \eqref{control_of_delicate_term_1}.
Note that for any $2<a<+\infty$, it holds $\vphi(y) \lesssim a \vphi'(y)$ on $(-\infty,a]$. Taking $a = \eta=s^{\frac{1+\beta}2}>2B$ (for sufficiently large $s_0$), we find
\begin{equation}
    \begin{split}
        s^{j-1}\int_{y<\eta}\vare^2\vphi_B \lesssim B\, \eta\, s^{j-1} \int_{y<\eta}\vare^2\vphi'_B \lesssim B\, s_0^{-\frac{1-\beta}{2}}s^j \int \vare^2 \vphi'_B \leq \frac{\mu}{2^{10}}\, s^j \int\vare^2 \vphi'_B \;.
    \end{split}
\end{equation}
Thus, we have proved the estimate \eqref{Lyapounov_control_of_H}.

\vspace{0.4cm}
\textit{Proof of the coercivity of $\mathcal{F}$:}\\
We decompose the functional as follows
\begin{equation}
    \begin{split}
        \mathcal{F} = 
        & \int \Big[ (\partial_y \vare)^2\psi_B +\vare^2 \vphi_B - 5Q^4 \vare^2\psi_B \Big]\\
        &-\frac{1}{3}\int \psi_B \Big[\big( W+F+\vare \big)^6 - (W+F)^6 - 6(W+F)^5\vare - 15Q^4\vare^2 \Big]
    =: \mathcal{F}_1 + \mathcal{F}_2.
    \end{split}
\end{equation}
The second term is treated as in the proof of the Lemma \ref{lemma-localized-virial-estimate} below. Other possible references are \cite[Appendix A]{Martel-Merle-02} and \cite[Lemma 3.5 ]{Combet-Martel-17}. We deduce that there exists $\nu>0$ such that for $B$ large enough, it holds
\begin{equation}
     \mathcal{F}_1 \geq \nu\, \mathcal{N}^2_B.
\end{equation}

We rewrite the term $\mathcal{F}_2$ as follows
\begin{equation}
    \begin{split}
        \mathcal{F}_2 = 
        & -\frac{1}{3}\int \psi_B \Big[ \big( W+F+\vare\big)^6 - \big( W+F\big)^6 - 6 \big( W+F\big)^5\vare - 15\big( W+F\big)^4\vare^2 \Big]\\
        & +5\int \psi_B\Big[\big( W+F \big)^4 - Q^4 \Big]\vare^2.
     \end{split}
\end{equation}
The collection of estimates \eqref{estimates_on_norms_L_infty_of_F}, \eqref{L_infty_norm_of_vare_control}, \eqref{estimate_on_Q_b_and_its_derivs} \eqref{estimates_on_r} and \eqref{BS1} yields
\begin{equation}
    \big| \mathcal{F}_2 \big| \leq \frac{\nu}{2^{10}}\,\mathcal{N}_B^2.
\end{equation}
This finishes the proof of the coercivity of $\mathcal{F}$.
\end{proof}
\section{Construction of blowing up solutions}\label{section_construction}
As hinted in the strategy of the proof in Section \eqref{S:strategy},
the quantity $h(s)$ defined in \eqref{definition-g-h} is related to one
direction of instability of the exotic blow-up behavior that we want to obtain.
Thus, we need to control $h$ by the choice of the initial data
and a specific continuity argument.
To this aim, we consider the bootstrap assumption
\begin{equation}\tag{BS3}\label{BS3}
    \lvert h(s) \rvert \leq s^{-\frac{\beta}{2} - 2\rho}.
\end{equation}
We follow the theory developed in Sections \ref{section_decomposition} and \ref{section_energy_estimates}. In particular, we prove that for some well-adjusted initial data, the decomposition \eqref{decomposition_of_v_in_lemma} and bootstrap estimates \eqref{BS1}, \eqref{BS2} and \eqref{BS3} hold for the entire time interval $[s_0,+\infty)$, for $s_0$ sufficiently large.

\begin{proposition}\label{Prop_on_blow_up}
Let $x_0$ and $s_0>0$ be large enough and fix $\sigma_0$, $b_0$ such that
\begin{equation}\label{init_cond_sigma_b}
    \Big\lvert \sigma_0 - \frac{1}{1-\beta}s^{1-\beta}_0 \Big\rvert \leq s_0^{1-\beta-2\rho} \qquad \text{and} \qquad \Big\lvert b_0 - \beta s_0^{-1} \Big\rvert \leq s_0^{-1-2\rho},
\end{equation}
where $\rho $ satisfies \eqref{condition_on_rho}.
Let $\vare_0 \in H^1(\RR)$ be such that 
\begin{equation}\label{conditons_ortho_and_decroissance_on_vare_0}
s_0^{j} \, \|\varepsilon_0\|^2_{H^1} +  \int_{y>0} \, y^k \, \varepsilon^2_0(y)\, dy < s_0^{-1}\, , \quad (\varepsilon_0,y\Lambda Q) = (\varepsilon_0, \Lambda Q) = (\varepsilon_0, Q) = 0\, .
\end{equation} 
    Then, there exists 
    \begin{equation}\label{init_cond_in_BS}
        \lambda_0 \in [\lambda^-_0,\lambda^+_0], \quad \text{ where } \quad \lambda^{\pm}_0 := \Big( \frac{1}{\int Q}\frac{2 c_0}{1-\theta} \sigma^{1-\theta}_0
        \pm s^{-\frac{\beta}{2}-2\rho}_0 \Big)^2
    \end{equation}
such that the solution $U$ of \eqref{gKdV_principal_eq} evolving from the initial data
\begin{equation}
    U_0(x):= \lambda_0^{-\frac{1}{2}}\Big( Q_{b_0}+\lambda_0^{-\frac{1}{2}}f(\tau(s_0),\sigma_0)R + \vare_0 \Big)\Big( \frac{x-\sigma_0}{\lambda_0}\Big) + f(\tau(s_0),x)
\end{equation}
has a decomposition as in Lemma \ref{lemma_on_decomposition_around_Q} and satisfies \eqref{BS1}, \eqref{BS2} and \eqref{BS3} on the interval $[s_0,+\infty)$.
\end{proposition}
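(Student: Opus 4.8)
We argue by a bootstrap argument coupled with a one–dimensional topological (intermediate value) argument over the parameter $\lambda_0\in[\lambda_0^-,\lambda_0^+]$ of \eqref{init_cond_in_BS}. For each such $\lambda_0$, let $U$ be the solution of \eqref{gKdV_principal_eq} with the stated initial data $U_0$, and define $s^*(\lambda_0)\in(s_0,+\infty]$ to be the supremum of the times $s\geq s_0$ for which $U$ is defined, $v=U-f$ is close to the approximate blow-up profile (so that Lemma~\ref{lemma_on_decomposition_around_Q} applies and provides $(\lambda,\sigma,b,\vare)$), and \eqref{BS1}, \eqref{BS2}, \eqref{BS3} all hold on $[s_0,s)$. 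The strict inequalities in \eqref{init_cond_sigma_b}, \eqref{conditons_ortho_and_decroissance_on_vare_0} (margins $2\rho$, resp.\ $s_0^{-1}$) versus the margins $\rho$ in \eqref{BS1} and the thresholds in \eqref{BS2} ensure that $s^*(\lambda_0)>s_0$, while by the very definition of $\lambda_0^\pm$ one has $h(s_0)=\pm s_0^{-\beta/2-2\rho}$ exactly on the boundary of \eqref{BS3}. It suffices to exhibit $\lambda_0$ with $s^*(\lambda_0)=+\infty$; assume for contradiction that $s^*(\lambda_0)<+\infty$ for all $\lambda_0\in[\lambda_0^-,\lambda_0^+]$.

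\emph{Closing the stable directions.} On $[s_0,s^*]$ we improve \eqref{BS1} and \eqref{BS2} strictly, so that neither can saturate at $s^*$. For \eqref{BS2}: integrating the Lyapounov control \eqref{Lyapounov_control_of_H} of $\mathcal{H}$ together with the estimate \eqref{scaling_term_estimate} of Lemma~\ref{lemma_control_of_scaling_term}, and using $\mathcal{H}(s_0)\lesssim s_0^{-1}$ (from \eqref{conditons_ortho_and_decroissance_on_vare_0}, the coercivity \eqref{coercivity_of_mathcal_F}, and $\lambda_0<1$ so that $\lambda_0^{k}\int\vphi_k\vare_0^2\leq\int_{y>0}y^k\vare_0^2$), together with the integrability in $s$ of the right-hand side $\lambda^k s^{-5/2}+s^{j-4}$ (here $j=\tfrac52<3$ and $\lambda\approx s^{-\beta}$) and once more \eqref{coercivity_of_mathcal_F}, we obtain $\mathcal{N}_B(s)^2\lesssim s^{-j}\mathcal{H}(s)\lesssim s_0^{-1/2}\,s^{-5/2}\leq\tfrac12 s^{-5/2}$ for $s_0$ large, which strictly improves the first inequality of \eqref{BS2}; the bound $\|\vare\|_{H^1}\leq\tfrac12\alpha^*$ follows from the mass and energy estimates \eqref{estimate_on_L2_norm_of_vare}--\eqref{estimate_on_L2_norm_of_grad_vare} (the weight $\lambda^2$ in \eqref{estimate_on_L2_norm_of_grad_vare} absorbing the growth of $g$) and the smallness of $\big|\int U_0^2-\int Q^2\big|$ and $\lambda^2|E(U_0)|$ built into the definition of $U_0$ for $s_0,x_0$ large. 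For \eqref{BS1}: integrate the modulation bounds of Lemma~\ref{lemma_mod_estimates_after_decomp} and their consequences \eqref{conseq_of_BS_on_m_on_bs} ($|\vec{m}|\lesssim s^{-5/4}$, $|b_s|\lesssim s^{-2}$), and recover $\lambda$ algebraically from \eqref{BS3} via $\lambda^{1/2}=\tfrac{1}{\int Q}\tfrac{2c_0}{1-\theta}\sigma^{1-\theta}+h$ (definition \eqref{definition-g-h} of $h$); the $2\rho$–margin in \eqref{init_cond_sigma_b} provides the room needed to close \eqref{BS1} with its $\rho$–margin. Consequently, at $s^*$ it is exactly \eqref{BS3} that saturates: $|h(s^*)|=(s^*)^{-\beta/2-2\rho}$.

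\emph{The topological argument.} Define $\Phi\colon[\lambda_0^-,\lambda_0^+]\to\{-1,+1\}$ by $\Phi(\lambda_0)=(s^*)^{\beta/2+2\rho}h(s^*)$. The exit at $s^*$ is transversal: since $|h(s^*)|=(s^*)^{-\beta/2-2\rho}$ and, by \eqref{conseq_of_BS_on_g_and_hs}, $|h_s|\lesssim s^{-\beta/2-1-4\rho}$, at $s=s^*$ one has $\tfrac{d}{ds}\big[s^{\beta+4\rho}h^2\big]=(\beta+4\rho)(s^*)^{-1}+O\big((s^*)^{-1-2\rho}\big)>0$ for $s_0$ large. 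Hence, by the implicit function theorem applied to $s^{\beta+4\rho}h(s)^2-1=0$, combined with the continuous dependence of the $H^1$ flow on $\lambda_0$ and the continuity of the decomposition of Lemma~\ref{lemma_on_decomposition_around_Q}, the map $\lambda_0\mapsto s^*(\lambda_0)$ is continuous, and so is $\Phi$, which takes values in the discrete set $\{-1,+1\}$. At the endpoints $\lambda_0=\lambda_0^\pm$ one has $h(s_0)=\pm s_0^{-\beta/2-2\rho}$ already on the boundary of \eqref{BS3}, so transversality forces $s^*=s_0$ and $\Phi(\lambda_0^\pm)=\pm1$. A continuous surjection from the connected interval $[\lambda_0^-,\lambda_0^+]$ onto $\{-1,+1\}$ cannot exist; this contradiction yields a $\lambda_0$ with $s^*(\lambda_0)=+\infty$. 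For this $\lambda_0$, the solution $U$ admits the decomposition of Lemma~\ref{lemma_on_decomposition_around_Q} and satisfies \eqref{BS1}, \eqref{BS2}, \eqref{BS3} on all of $[s_0,+\infty)$, as claimed.

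\emph{Main obstacle.} The delicate point is not the (one–dimensional) topological step but the bookkeeping guaranteeing that \emph{only} \eqref{BS3} can saturate: one must verify that the integrated modulation and energy estimates genuinely gain a factor $\tfrac12$ (or a power of $s_0$) over \eqref{BS1}--\eqref{BS2}, which rests on the gap between the $2\rho$–margins in \eqref{init_cond_sigma_b}--\eqref{init_cond_in_BS} and the $\rho$–margins in \eqref{BS1}, on the smallness $\mathcal{H}(s_0)\lesssim s_0^{-1}$, and on the fact --- encoded in Lemmas~\ref{lemma_with_estimates_on_hs_and_gs} and \ref{lemma_mod_estimates_after_decomp} and in the formal choice $l_0=l_1=0$ --- that $h_s$ is of strictly lower order, $|h_s|\lesssim s^{-\beta/2-1-4\rho}$, which is precisely what renders the exit transversal. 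Throughout, the fact that $s_0$ must be taken large depending on $k$ (hence on $\nu$), as already noted in the proof of Proposition~\ref{Proposition_Monot_formula}, has to be respected.
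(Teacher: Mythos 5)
Your proposal follows essentially the same strategy as the paper: the same bootstrap setup with $s^*(\lambda_0)$, the same strict improvement of \eqref{BS1}--\eqref{BS2} via the integrated Lyapounov control \eqref{Lyapounov_control_of_H} and coercivity \eqref{coercivity_of_mathcal_F}, the same transversality computation for $G(s)=s^{\beta+4\rho}h^2(s)$ using \eqref{conseq_of_BS_on_g_and_hs}, and the same topological contradiction from a continuous map $\Phi\colon[\lambda_0^-,\lambda_0^+]\to\{-1,+1\}$ with $\Phi(\lambda_0^\pm)=\pm1$. The only cosmetic difference is that you invoke the implicit function theorem for the continuity of $s^*(\lambda_0)$ where the paper gives a direct $\epsilon$--$\delta$ argument; the underlying mechanism (transversality plus continuous dependence) is identical.
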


\begin{remark}
As one can see in the statement of Proposition \ref{Prop_on_blow_up}, 
we choose the initial scaling parameter $\lambda_0$ (in a certain small interval) to control the instability related to $h$.
Note also that the initial data $U_0$ chosen in \eqref{init_cond_in_BS} is already modulated,
in the sense that $(\lambda_0,\sigma_0,b_0)$ are the parameters of the modulation of the solution $U(t)$ at the initial time $t=t_0$.
\end{remark}
\begin{remark}
Observe that the decay condition on the right-hand side  on
$\vare_0$ imposed by \eqref{conditons_ortho_and_decroissance_on_vare_0}
implies that $\vare_0$ is negligible compared to $f_0$ for large $x$. 
To justify this formally, we compute, using the value of $k$ in \eqref{choice-of-k}
and $2\theta = \frac{2-\beta}{1-\beta}$,
\[
\int_{x\gg1} x^k f_0^2 dx
\geq \int_{x\gg1} x^{\frac 2{1-\beta}} x^{-2\theta} dx
= \int_{x\gg1} x^{\frac\beta{1-\beta}} dx = + \infty.
\]
This ensures that the perturbation $\vare$ will not perturb the decaying tail of the function
$f$, even if that one is rapidly decaying for $\nu$ close to $\frac 12$ (case where $\theta$ is large).
\end{remark}

\vspace{0.4cm}
\begin{proof}[Proof of the Proposition \ref{Prop_on_blow_up}] 
By the continuity argument, $U$ has a decomposition as in Lemma \ref{lemma_on_decomposition_around_Q} on some small time interval $[s_0,s_1]$ with $s_1>s_0$.

For $\lambda_0 \in \mathcal{D}_0=[\lambda_0^-,\lambda_0^+]$, we define
\begin{equation}
    s^*(\lambda_0) :=\sup\{s\geq s_0, \text{ Lemma \ref{lemma_on_decomposition_around_Q}  applies and \eqref{BS1}-\eqref{BS2}-\eqref{BS3} hold on $[s_0,s]$} \}.
\end{equation}
For the sake of contradiction, we suppose for all $\lambda_0 \in \mathcal{D}_0$,
it holds $s^*(\lambda_0)< +\infty$.
The supremum $s^*$ is given only by the saturation of the bootstrap assumptions.


\vspace{0.4cm}
First, we strictly improve the bootstrap estimates \eqref{BS1} and \eqref{BS2} on $[s_0,s^*]$. Then, we find a contradiction using a continuity argument on the bootstrap assumption \eqref{BS3}.

\vspace{0.2cm}
\textit{Closing \eqref{BS2}.}\\
We apply \eqref{Lyapounov_control_of_H} with $j= \frac{5}{2}$ 
\begin{equation}
    \frac{d}{ds}\big[ \mathcal{H} \big] \;\lesssim\;  \lambda^k s^{-\frac 52} + s^{-\frac{3}{2}}.
\end{equation}
This choice $j=\frac 52$ is justified on the one hand by the need of having an error term on the 
right-hand side which is integrable in time and on the other hand by the fact that
a larger $j$ gives a better final estimate on $\vare$.
Integrating on $[s_0,s]$ for some $s\leq s^*$, we get
\begin{equation}
    s^{\frac{5}{2}}\mathcal{F}(s) \;\lesssim\; s^{\frac{5}{2}}_0 \mathcal{F}(s_0) + \lambda^k(s_0) \int \vphi_k\,\vare^2_0 + s_0^{-\frac 12}.
\end{equation}
By our choice of initial conditions in \eqref{conditons_ortho_and_decroissance_on_vare_0}, we have 
\begin{equation}
    s_0^\frac52\big| \mathcal{F}(s_0)\big| + \int \vphi_k\,\vare^2_0  \lesssim s_0^{-1}.
\end{equation}
The coercivity property \eqref{coercivity_of_mathcal_F}, \eqref{BS2} and the lower bound for $k$, yield, for $s_0$ large enough,
\begin{equation}
    \mathcal{N}^2_B(s) \;\lesssim\; \mathcal{F}(s)  \;\lesssim\; s^{-\frac 52}s_0^{-\frac12}
    \leq \frac12 s^{-\frac 52}.
\end{equation}
Thus, the estimate of $\mathcal{N}_B$ in \eqref{BS2} is strictly improved.
The control of the $H^1$ norm provided by \eqref{estimate_on_L2_norm_of_vare} and \eqref{estimate_on_L2_norm_of_grad_vare} completes the improvement of the
bootstrap estimate \eqref{BS2} for $x_0$ and $s_0$ sufficiently large.

\vspace{0.2cm}
\textit{Closing \eqref{BS1}.} 
\begin{itemize}
    \item $(BS1_{\sigma})$: Using \eqref{BS1},\eqref{BS2} and the relation $(1-\beta)(1-\theta) = -\frac{\beta}{2}$, we get
\begin{equation}\label{lambda_sigma_2_2theta}
    \begin{split}
        \Big\lvert \lambda(s) - \Big( \frac{2}{\int Q}c_0 \frac{1}{\theta -1} \Big)^2 \sigma^{2-2\theta}(s) \Big\rvert \lesssim |h(s)| \Big\lvert \lambda^{\frac{1}{2}}(s) - \frac{2}{\int Q} c_0 \frac{1}{\theta -1} \sigma^{1-\theta}(s)\Big\rvert 
        \lesssim s^{-\beta-2\rho}
    \end{split}
\end{equation}
    and 
\begin{equation}
    \begin{split}
        \Big\lvert \sigma_s - \Big( \frac{2}{\int Q}c_0 \frac{1}{\theta -1} \Big)^2 \sigma^{2-2\theta}(s) \Big\rvert \lesssim |h(s)| \Big| \sigma_s^{\frac{1}{2}} - \lambda^{\frac{1}{2}} \Big| \Big\lvert \sigma_s^{\frac{1}{2}}(s) - \frac{2}{\int Q} c_0 \frac{1}{\theta -1} \sigma^{1-\theta}(s)\Big\rvert 
        \lesssim s^{-\beta-2\rho}.
\end{split}
\end{equation}
Using 
\begin{equation}\label{relation_on_constant_2theta_minus_one}
    \Big( \frac{2}{\int Q} c_0 \frac{1}{\theta -1}\Big)^2 = (1-\beta )^{-\frac{\beta}{1-\beta}} = (2\theta -1)^{2\theta - 2},
\end{equation}

yields
\begin{equation}
    \Big\lvert \big(\sigma^{\frac{1}{1-\beta}}(s)\big)_s - (1-\beta)^{-\frac{1}{1-\beta}} \Big\rvert = \frac{1}{1-\beta} \sigma^{\frac{\beta}{1-\beta}}(s) \Big\lvert \sigma_s - \Big( \frac{2}{\int Q}c_0 \frac{1}{\theta -1} \Big)^2 \sigma^{2-2\theta}(s) \Big\rvert \lesssim s^{-2 \rho}.
\end{equation}

The initial condition \eqref{init_cond_sigma_b} on $\sigma_0$, the mean value theorem for the function $x \mapsto x^{\frac{1}{1-\beta}}$, $\frac{1}{1-\beta} > 2$ and the integration of the previous result yields
\begin{equation}
    \Big\lvert \sigma^{\frac{1}{1-\beta}}(s) - (1-\beta)^{-\frac{1}{1-\beta}} s\Big\rvert \lesssim s^{1-2\rho} \iff \Bigg\lvert \Bigg( \frac{(1-\beta)\sigma(s)}{s^{1-\beta}}\Bigg)^{\frac{1}{1-\beta}} -1  \Bigg\rvert \lesssim s^{-2\rho}.
\end{equation}
By the mean value theorem as previously, we find 
\begin{equation}
    \Bigg\lvert  \frac{(1-\beta)\sigma(s)}{s^{1-\beta}}  -1  \Bigg\rvert \lesssim s^{-2\rho} \iff \Big\lvert \sigma(s) - \frac{1}{1-\beta}s^{1-\beta} \Big\rvert \lesssim s^{1-\beta-2\rho}.
\end{equation}

\item $(BS1_\lambda)$: From \eqref{lambda_sigma_2_2theta} we get
\begin{equation}
    \Big\lvert \lambda(s) - s^{-\beta} \Big\rvert \lesssim s^{-\beta-2\rho}  + \Bigg\lvert  \Bigg( \frac{2}{\int Q}c_0 \frac{1}{\theta -1} \Bigg)^2 \sigma^{2-2\theta}(s) - s^{-\beta} \Bigg\rvert.
\end{equation}
By $(BS1_{\sigma})$, \eqref{relation_on_constant_2theta_minus_one}, the mean value theorem and since $\beta > \frac{1}{2}$, we get the desired estimate
\begin{equation}
    \Big\lvert \lambda(s) - s^{-\beta} \Big\rvert \lesssim s^{-\beta-2\rho} + s^{1-3\beta-2\rho} \lesssim s^{-\beta-2\rho}.
\end{equation}

\item $(BS1_b)$: We write
\begin{equation}
    \begin{split}
        &\Big\lvert \frac{1}{\beta}sb(s) -1 \Big\rvert = \frac{1}{\beta}\lambda^2(s)s \Big\lvert \frac{b(s)}{\lambda^2(s)} - \beta s^{-1}\lambda^{-2}(s)\Big\rvert \\
        &\lesssim \lambda^2(s)s \Bigg( \lvert g(s) \rvert + \Big\lvert \frac{4}{\int Q}c_0 \lambda^{-\frac{3}{2}}(s)\sigma^{-\theta}(s) + \beta s^{-1}\lambda^{-2}(s) \Big\rvert \Bigg) .
    \end{split}
\end{equation}
 By definition of $c_0$ and of $\beta$ we find
 \begin{equation}
 \begin{split}
     &\Big\lvert \frac{4}{\int Q}c_0 \lambda^{-\frac{3}{2}}(s)\sigma^{-\theta}(s) + \beta s^{-1}\lambda^{-2}(s) \Big\rvert  = 2(\theta-1)(2\theta-1)^{-1}\lvert \lambda^{-\frac{3}{2}}(s)\rvert \Bigg\lvert \Big( \frac{\sigma(s)}{2\theta-1} \Big)^{-\theta} - s^{-1}\lambda^{-\frac{1}{2}} \Bigg\rvert\\
     &\lesssim  \lvert \lambda^{-\frac{3}{2}}(s)\rvert \Bigg( \Big\lvert \Big( \frac{\sigma(s)}{2\theta-1} \Big)^{-\theta} - s^{-1+\frac{\beta}{2}} \Big\rvert +  \lvert s^{-1}\lambda^{-\frac{1}{2}}(s)- s^{-1+\frac{\beta}{2}} \rvert \Bigg)\\
     &\lesssim s^{\frac{3}{2}\beta}(s^{-1-2\rho}+ s^{-1-2\beta-2\rho}) \lesssim s^{\frac{3}{2}\beta-1-2\rho}.
\end{split}
 \end{equation}
We have used the next result. Since $2\theta-1 = (1-\beta)^{-1}$, using $(BS1_{\sigma})$ we have
 \begin{equation}
 \begin{split}
     \Bigg\lvert \Big( \frac{\sigma(s)}{2\theta-1} \Big)^{-\theta} - s^{-1+\frac{\beta}{2}} \Bigg\rvert = \Big\lvert \Big( (1-\beta)\sigma(s) \Big)^{-\theta} - \Big(s^{1-\beta}\Big)^{-\theta}\Big\rvert \lesssim s^{-1-2\rho}.
    \end{split}
 \end{equation}
 
Thus by $(BS1_\lambda)$ we find
\begin{equation}
    \Big\lvert \frac{4}{\int Q}c_0 \lambda^{-\frac{3}{2}}(s)\sigma^{-\theta}(s) + \beta s^{-1}\lambda^{-2}(s) \Big\rvert \lesssim s^{\frac{3}{2}\beta-1-2\rho}.
\end{equation}

We conclude then, using the estimates \eqref{conseq_of_BS_on_g_and_hs} and \eqref{BS3}
\begin{equation}
\begin{split}
    &\Big\lvert \frac{1}{\beta}sb(s)-1 \Big\rvert \lesssim \lambda^2(s)\,s\,\Big(|g(s)| + \Big|\frac{4}{\int Q} c_0\,\lambda^{\frac 32}(s)\,\sigma^{-\theta}(s) + \beta\,s^{-1}\,\lambda^{-2}(s) \Big| \Big)\\ 
    &\lesssim s^{1-2\beta}\Big( s^{2\beta-1-2\rho} + s^{\frac{3}{2}\beta-1-2\rho} \Big) \lesssim s^{-2\rho} .
\end{split}
\end{equation}
\end{itemize}

\vspace{0.2cm}
\textit{Contradiction by a continuity argument.}\\
Define the map
\begin{equation}
    \Phi: \lambda_0 \in [\lambda^-_0,\lambda^+_0] \xmapsto{\Phi_1} \mu_0 \in [-1,1] \xmapsto{\Phi_2} s^*\in [s_0,+\infty)\xmapsto{\Phi_3} h(s^*)(s^*)^{\frac{\beta}{2}+2\rho}\in \{-1,1\}.
\end{equation}
In the following, we prove that the map $\Phi$ is continuous 
and $\Phi(\lambda_0^-)=-1$, $\Phi(\lambda_0^+)=1$, which
leads to a contradiction. 

The map $\Phi_1$ is continuous and such that $\Phi_1(\lambda^{\pm}_0) = \pm1$. By the definition of $h$, the map $\Phi_3$ is also continuous. 


We define $G(s) = h^2(s)s^{\beta+4\rho}$. For $s_1 \in [s_0,s^*]$ such that $G(s_1) = 1$, the following transversality property holds true
\begin{equation}
\begin{split}
    G'(s_1) 
    &= 2h_s(s_1)h(s_1)(s_1)^{\beta + 4\rho} + (\beta + 4\rho)h^2(s_1)(s_1)^{\beta+4\rho-1}\\
    &= \pm 2 h_s(s_1)(s_1)^{\frac{\beta}{2}+2\rho} + (\beta + 4\rho)\frac{G(s_1)}{s_1}\\
    &\geq -2(s_1)^{-1-2\rho}+\frac{1}{2s_1} \geq -\frac{1}{20s_1}+ \frac{1}{2s_1} >\frac{1}{20s_1} >0.
\end{split}
\end{equation}

In particular, $G(s^*) = 1$ and $G(s_0) = 1$. From the definition of $\lambda^{\pm}_0$, the initial conditions \eqref{init_cond_sigma_b}, \eqref{conditons_ortho_and_decroissance_on_vare_0} and the transversality property on $s_0$, it holds $\Phi_2 \circ \Phi_1 (\lambda^{\pm}_0) = s_0$. Thus, $\Phi(\lambda^{\pm}_0)= \pm1$.

It remains to prove that the map $\Phi_2$ is continuous. Let $\mu_0\in (-1,1)$, so we have $s^* > s_0$. 

By continuity of $G$ and $G'$ we know that for some $\epsilon <s^*-s_0$ and $\delta_\epsilon>0$ sufficiently small holds $G(s)<1-\delta$ for all $s \in [s_0,s^*-\epsilon]$ and $G(s^*+\epsilon)> 1+\delta$. 
By continuous dependence on initial data for parameters and continuity of $\Phi_1$, there exists $\epsilon_\mu$, such that 
\begin{equation}
    \lvert \mu_0 - \tilde{\mu}_0\rvert\leq \epsilon_\mu \Rightarrow \lvert G(s) - \tilde{G}(s) \rvert \leq \frac{\delta}{2} \qquad 
    \text{on }\;[s_0,s^*+\epsilon].
\end{equation}

Let $\tilde{G}$ and $\tilde{s}^*$ be the quantities associated with $\tilde{\mu}_0$. 
We get 
\begin{equation*}
    \forall s \in [s_0,s^*-\epsilon], \quad  \tilde{G}(s) \leq G(s)+\frac{\delta}{2} < 1-\frac{\delta}{2} \quad\Rightarrow\quad \tilde{s}^* \geq s^*-\epsilon
\end{equation*}
and
\begin{equation*}
    \tilde{G}(s^*+\epsilon)\geq G(s^*+\epsilon)-\frac{\delta}{2} >1+\frac{\delta}{2} \quad\Rightarrow\quad \tilde{s}^* \leq s^*+\epsilon.
\end{equation*}
This concludes the continuity of $\Phi_2$ for $\mu_0 \in (-1,1)$. In the case $\mu_0 = \pm1$, we get $s^* = s_0$. By a similar argument, we get the continuity for $\mu_0 = \pm1$.
\end{proof}

\vspace{0.4cm}
\begin{proof}[Proof of the Theorem \ref{Theorem_principal_result}:]
By definition of $\chi_b$, the Lemma \ref{lemma_on_the_decaying_tail} and the estimates \eqref{estimate_on_L2_norm_of_grad_vare}, \eqref{BS2}, we have
\begin{equation}
\begin{split}
    &\Big\| \partial_x\Big( U(t,x) - \lambda^{-\frac12}(s)Q(y)  \Big) \Big\|_{L^2_x}\\ 
    &\leq 
    \lambda^{-\frac 32}(s)|b(s)|\|P'_{b(s)}(y)\|_{L^2_x} + \lambda^{-\frac32}(s)\| (\partial_x \vare)(s,y) \|_{L^2_x} + \|\partial_x f(t,x)\|_{L^2_x}
 \end{split}  
\end{equation}
where $y = \frac{x-\sigma(s)}{\lambda(s)}$ and $s = s(t)$.

By the definition of $P_b$, Lemma \ref{lemma_on_the_decaying_tail} and the estimates \eqref{estimate_on_L2_norm_of_grad_vare}, \eqref{conseq_of_BS_on_g_and_hs}, we get 
\begin{equation}
\begin{split}
    &\Big| \|\partial_x U(t)\|_{L^2_x} - \lambda^{-1}\|Q'\|_{L^2} \Big|\lesssim \lambda^{-1}(s)\,b^{\frac 58}(s)+ \lambda^{-1}(s)\,\|\partial_x \vare(s)\|_{L^2} + \delta(x^{-1}_0)\\
    &\lesssim
    \lambda^{-1}(s)\,\big( s^{-\frac 58} + s^{\beta-\frac 32} + s^{-\frac 12 - 2\rho} \big) + |E(U_0)|^{\frac 12} + |g(s_0)|^{\frac 12} + \delta(x^{-1}_0)
  \end{split} 
\end{equation}
Therefore
\begin{equation}
    \|\partial_x U(t)\|_{L^2} = \lambda^{-1}(s(t))\|Q'\|_{L^2} + o_{t \to T^{-}}\big(\lambda^{-1}(s(t))\big).
\end{equation}
From the definition of the rescaled variable and \eqref{BS2}, we have
    \begin{equation}
        T-t = \int^{+\infty}_{s(t)} \lambda^3(s')\,ds' \sim \frac{1}{3\beta-1}\,s(t)^{-(3\beta-1)} \qquad\text{and}\qquad \lambda(s(t))\sim \big[ (3\beta-1)(T-t) \big]^{\frac{\beta}{3\beta-1}}.
    \end{equation}
Thus,
\begin{equation}
    \|\partial_x U(t)\|_{L^2} \sim \lambda^{-1}(s(t))\,\|Q'\|_{L^2} \sim c\, (T-t)^{-\nu}, \qquad \frac 12 <\nu(\beta) < 1.
\end{equation}
Which concludes the proof of the Theorem \ref{Theorem_principal_result}.
\end{proof}

\appendix
\section{}
\begin{proof}[Proof of the Lemma \ref{lemma-localized-virial-estimate} : Localised virial estimate]
The proof is similar to \cite[Lemma 3.5]{Combet-Martel-17}. First we announce the coercivity property of a virial quadratic form under suitable repulsivity properties.
\begin{proposition}{\cite[Proposition 4]{Martel-Merle-00}}\label{Prop_Martel_Merle00}  There exists $\mu>0$ such that, for all $v\in H^1(\RR)$, it holds
    \begin{equation}
        3\int v^2_y + \int v^2 - 5\int Q^4 v^2 + 20 \int y Q' Q^3 v^2 \geq \mu \int (v^2_y + v^2) - \frac{1}{\mu}\Big( \int v\, y \, \Lambda Q\Big)^2 - \frac{1}{\mu} \Big( \int v\, Q \Big)^2.
    \end{equation}
\end{proposition}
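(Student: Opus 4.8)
We indicate how Proposition~\ref{Prop_Martel_Merle00} --- which is \cite[Proposition~4]{Martel-Merle-00} --- could be established, following the classical scheme for coercivity of linearized and virial quadratic forms. Write the left-hand side as $B(v)=(\mathcal{B}v,v)$ with
\[
\mathcal{B}\;:=\;-3\partial_y^2+1-5Q^4+20\,yQ'Q^3,
\]
a self-adjoint operator on $L^2(\RR)$ with domain $H^2(\RR)$. Using $Q=(3\sech^2(2y))^{1/4}$ and $Q'=-Q\tanh(2y)$ one has $20\,yQ'Q^3=-60\,y\sech^2(2y)\tanh(2y)$, so the potential of $\mathcal{B}$ is smooth, even and exponentially decaying; by Weyl's theorem $\sigma_{ess}(\mathcal{B})=[1,+\infty)$ and $\mathcal{B}$ has at most finitely many eigenvalues below $1$, each of finite multiplicity. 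Also $5Q^4-20\,yQ'Q^3=15\sech^2(2y)+60\,y\sech^2(2y)\tanh(2y)\ge 0$ on $\RR$, so $B(v)\ge 3\|\partial_y v\|_{L^2}^2+\|v\|_{L^2}^2-C_0\|v\|_{L^2}^2$ for some $C_0>0$; in particular $B$ is bounded below on the $H^1$ unit sphere.

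The heart of the proof is to show that $B(v)>0$ for every nonzero $v$ in $H:=\{v\in H^1(\RR):(v,y\Lambda Q)=(v,Q)=0\}$. Since the potential of $\mathcal{B}$ is even, both $B$ and the constraints respect the parity decomposition: $Q$ is even, $y\Lambda Q$ is odd, and $B(v)=B(v_{\mathrm{ev}})+B(v_{\mathrm{od}})$, so it suffices to prove positive definiteness of $B$ on $\{v\in H^1_{\mathrm{ev}}:(v,Q)=0\}$ and on $\{v\in H^1_{\mathrm{od}}:(v,y\Lambda Q)=0\}$ separately. In each sector one is restricting a self-adjoint form by a single linear condition, and the relevant dichotomy --- standard index theory for constrained quadratic forms, as in orbital stability theory --- is that, when $\mathcal{B}$ is invertible on that parity sector with exactly one negative eigenvalue there, the constrained form is positive definite precisely when $(\mathcal{B}^{-1}\phi,\phi)<0$ for the constraint function $\phi$ ($\phi=Q$, resp. $\phi=y\Lambda Q$). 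One therefore has to (a) count the non-positive spectrum of $\mathcal{B}$ in each parity sector --- which is done by Sturm oscillation/shooting applied to the zero-energy equation $3v''=(1-5Q^4+20\,yQ'Q^3)v$, exploiting the explicit form of $Q$, and should yield one negative eigenvalue per sector, consistently with $B(Q)=-2\int Q^2-\tfrac{16}{3}\int Q^6<0$; and (b) compute the signs of $(\mathcal{B}^{-1}Q,Q)$ and $(\mathcal{B}^{-1}(y\Lambda Q),y\Lambda Q)$, i.e.\ solve $\mathcal{B}\xi_1=Q$ and $\mathcal{B}\xi_2=y\Lambda Q$ (using the identities already recorded, $\mathcal{L}\Lambda Q=-2Q$, $\mathcal{L}Q'=0$, $\mathcal{L}R=5Q^4$, $(\mathcal{L}P)'=\Lambda Q$, $(Q,\Lambda Q)=0$) and evaluate the resulting integrals.

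Granting this strict positivity, I would upgrade it to $B(v)\ge c\,\|v\|_{H^1}^2$ on $H$ by a routine compactness argument: $m:=\inf\{B(v):v\in H,\ \|v\|_{H^1}=1\}$ is finite; if $m\le 0$, a minimizing sequence $v_n$ has a weak $H^1$-limit $v\in H$ (the two constraints are weakly continuous); if $v\ne 0$, weak lower semicontinuity of $B$ gives $B(v)\le m\le 0$, impossible; if $v=0$, then $v_n\to0$ in $L^2_{loc}$, so $\int(5Q^4-20\,yQ'Q^3)v_n^2\to 0$ and $B(v_n)=3\|\partial_y v_n\|_{L^2}^2+\|v_n\|_{L^2}^2-o(1)\ge\|v_n\|_{H^1}^2-o(1)=1-o(1)$, again impossible; hence $m=:c>0$. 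Then, for arbitrary $v\in H^1(\RR)$, set $\tilde v:=v-a\,y\Lambda Q-b\,Q$ with $a,b$ the unique linear combinations of $(v,y\Lambda Q)$ and $(v,Q)$ making $\tilde v\in H$ (so $|a|+|b|\lesssim|(v,y\Lambda Q)|+|(v,Q)|$), expand $B(v)$ bilinearly, absorb the cross terms $B(\tilde v,y\Lambda Q)$, $B(\tilde v,Q)$ --- each bounded by $\|\tilde v\|_{H^1}$ --- via Young's inequality, and use $\|v\|_{H^1}^2\lesssim\|\tilde v\|_{H^1}^2+a^2+b^2$; shrinking the constant then yields $B(v)\ge\mu(\|\partial_y v\|_{L^2}^2+\|v\|_{L^2}^2)-\tfrac1\mu(v,y\Lambda Q)^2-\tfrac1\mu(v,Q)^2$, which is the claim.

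The main obstacle is step (b) above: the exact count of the non-positive spectrum of $\mathcal{B}$ and, above all, the signs of $(\mathcal{B}^{-1}Q,Q)$ and $(\mathcal{B}^{-1}(y\Lambda Q),y\Lambda Q)$. The linearized operator $\mathcal{L}$ carries the pure Pöschl--Teller potential $15\sech^2(2y)$, hence the fully explicit spectrum $\{-8,0\}\cup[1,+\infty)$; but the extra term $20\,yQ'Q^3$ destroys this exact solvability --- the zero-energy ODE for $\mathcal{B}$ has no elementary solution, and neither, apparently, do $\mathcal{B}\xi_i=\phi_i$ --- so these verifications have to be carried out by oscillation/shooting arguments together with explicit but somewhat lengthy profile integrals. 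This is precisely the technical content of \cite[Proposition~4]{Martel-Merle-00}, which is why we invoke it here rather than reprove it.
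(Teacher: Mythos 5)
The paper does not prove this statement at all: it is imported verbatim as \cite[Proposition 4]{Martel-Merle-00}, so the ``paper's proof'' is a citation, and your proposal ultimately does the same thing --- you lay out the standard constrained-coercivity scheme and then defer the decisive verifications to that very reference. In that sense you take essentially the same route as the paper. Where your scaffolding is explicit it is correct: the extra potential $20\,yQ'Q^3=-60\,y\sech^2(2y)\tanh(2y)$ is even, so the parity splitting with the even constraint $Q$ and the odd constraint $y\Lambda Q$ is legitimate; the value of the quadratic form at $Q$, namely $-2\int Q^2-\tfrac{16}{3}\int Q^6<0$, checks out; and the compactness argument upgrading strict positivity on the doubly-constrained subspace to the quantitative bound with the $\mu^{-1}(v,y\Lambda Q)^2+\mu^{-1}(v,Q)^2$ penalty is routine and correctly described. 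But be aware that, read as a self-contained argument, the proposal has a gap exactly where you acknowledge it: the count of the non-positive spectrum of $\mathcal{B}=-3\partial_y^2+1-5Q^4+20\,yQ'Q^3$ in each parity sector and the signs of $(\mathcal{B}^{-1}Q,Q)$ and $(\mathcal{B}^{-1}(y\Lambda Q),y\Lambda Q)$ \emph{are} the content of the proposition, and nothing in the sketch establishes them. Since the paper itself treats the result as a black box, invoking \cite{Martel-Merle-00} for precisely those steps is consistent with the paper's treatment; just do not present the surrounding outline as an independent proof.
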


\vspace{0.4cm}
We consider a cut-off function 
\begin{equation}
    \zeta = \begin{cases}
        0, \quad y<-\frac B 2\\
        1, \quad y>-\frac B 4
    \end{cases} \quad , \quad 0\leq \zeta \leq 1 \quad \text{on}\;\ \RR.
\end{equation}
Set $\zeta_B(y) = \zeta\big( \frac{y}{B}\big)$ and $\tilde{\vare}(y) = \vare(y)\,\zeta_B(y)$.
By definition of $\zeta$, we get
\begin{equation}
    \int \tilde{\vare}^2 \leq \int_{y>-\frac B2} \vare^2\,, \qquad \int (\partial_y \tilde{\vare})^2 \leq \int_{y>-\frac B2}(\partial_y \vare)^2 + \int \big( (\zeta'_B)^2 - \frac{1}{2}(\zeta^2_B)'' \big) \, \vare^2 \leq \int_{y>-\frac B2} (\partial_y \vare)^2 + \frac{C}{B^2} \int_{-\frac B2}^{-\frac B 4} \vare^2
\end{equation}
and
\begin{equation}
    -5\int Q^4 \tilde{\vare}^2 + 20\int y\, Q'Q^3 \tilde{\vare}^2 \leq -5\int_{y>-\frac B2} Q^4 \vare^2  + 20 \int_{y>-\frac B2} y\, Q'Q^3 \vare^2  + \frac{C}{B^{10}} \int_{-\frac B2}^{-\frac B 4}\vare^2.
\end{equation}
By the orthogonality condition in \eqref{orthogonal_conditions_in_lemma}, we get
\begin{equation}
    \Big| \int \vare \, \zeta_B \, y \Lambda Q \Big| = \Big| \int \vare\, (1-\zeta_B)\,y \Lambda Q \Big| \leq \frac{C}{B^{10}}\Big(\int_{<-\frac B 4} \vare^2 \, e^{-\frac{|y|}{2}} \Big)^{\frac 12}.
\end{equation}
The Proposition \ref{Prop_Martel_Merle00} imply for $B \gg 1$
\begin{equation}
    (3-\mu)\int_{y>-\frac B2} (\partial_y \vare)^2 + (1 - \frac \mu 2)\int_{y>-\frac B2} \vare^2 - 5\int_{y>-\frac B2} Q^4 \vare^2 + 20 \int_{y>-\frac B2} y \, Q' Q^3 \vare^2  \geq -\frac C\mu \frac{1}{B^{10}}\int \vare^2 \,e^{-\frac{|y|}{2}}.
\end{equation}
\end{proof}
\printbibliography

@article{MMPI,
        author = {Y. Martel and F. Merle and P. Raphael},
        title = {Blow up for the critical gKdV equation I: dynamics near the soliton},
        journal = {Acta Math. 212 },
        pages = {59-140},
        year = {2014}
}

@article{MMPII,
        author = {Y. Martel and F. Merle and P. Raphaël},
        title = {Blow up for the critical gKdV equation. II: Minimal mass dynamics},
        journal = {J. Eur. Math. Soc. (JEMS) },
        volume  = {17},
        number = {No. 8},
        pages = {1855-1925},
        year = {2015}
}

@article{MMPIII,
        author = {Y. Martel and F. Merle and P. Raphael},
        title = {Blow up for the critical gKdV equation III: exotic regimes},
        journal = {Pisa Cl. Sci. (5). },
        volume = {XIV},
        pages = {575-631},
        year = {2015}
}

@article{Martel-Pilod,
        author = {Y. Martel and D. Pilod},
        title = {Full Family of Flattening Solitary Waves for the Critical Generalized KdV Equation},
        journal = {Communications in Mathematical Physics},
        volume  = {378},
        pages = {1011–1080},
        year = {2020}
}

@article{KPV93,
        author = {C.E. Kenig and G. Ponce and L. Vega},
        title = {Well-posedness and scattering results for the generalized korteweg-de vries equation via the contraction principle},
        journal = {Comm. Pure Appl. Math.},
        volume  = {46},
        pages = {527-620},
        year = {1993}
}

@article{Combet-Martel-17,
        author = {V. Combet and Y. Martel},
        title = {Sharp asymptotics for the minimal mass blow up solution of the critical gKdV equation},
        journal = {Bull. Sci. Math.},
        volume  = {141},
        number = {No. 2},
        pages = {20-103},
        year = {2017}
}

@article{Combet-Martel-18,
        author = {V. Combet and Y. Martel},
        title = {Construction of multibubble solutions for the critical gKdV equation},
        journal = {SIAM J. Math. Anal.},
        volume  = {50},
        number = {No. 4},
        pages = {3715-3790},
        year = {2018}
}

@article{Martel-Merle-00,
        author = {Y. Martel and F. Merle},
        title = {A Liouville theorem for the critical generalized Korteweg-de Vries equation},
        journal = {J. Math. Pures Appl.},
        volume  = {(9)79},
        number = {4},
        pages = {339-425},
        year = {2000}
}

@article{Martel-Merle-01,
        author = {Y. Martel and F. Merle},
        title = {Instability of solitons for the critical generalized Korteweg de Vries equation},
        journal = {Geometric And Functional Analysis},
        volume  = {11},
        number = {1},
        pages = {74-123},
        year = {2001}
}

@article{Weinstein-85,
        author = {M.I. Weinstein},
        title = {Modulational stability of ground states of nonlinear Schrödinger equations},
        journal = {SIAM J. Math. Anal.},
        volume  = {16},
        pages = {472-491},
        year = {1985}
}

@article{Martel-Merle-02,
        author = {Y. Martel and F. Merle},
        title = {Stability of blow-up profile and lower bounds for blow-up rate for the critical generalized KdV equation},
        journal = {Ann. Math. (2) 155},
        volume  = {No. 1},
        pages = {235-280},
        year = {2002}
}

@article{Nakanishi-16,
        author = {Y. Martel and F. Merle and K. Nakanishi and  Pierre Raphaël},
        title = {Codimension one threshold manifold for the critical gKdV equation.},
        journal = {Commun. Math. Phys. 342},
        volume  = {No. 3},
        pages = {1075-1106},
        year = {2016}
}

@article{Merle-1993-minmass-nls,
        author = {F. Merle},
        title = {Determination of blow-up solutions with minimal mass for nonlinear Schrödinger equations with critical power.},
        journal = {Duke Math. J.},
        volume  = {69},
        number = {No. 2},
        pages = {427-454},
        year = {1993}
}

@article{LPSS-1988-first-blow-up-nls,
        author = {M.J. Landman and G.C. Papanicolaou and C. Sulem and P.-L. Sulem},
        title = {Rate of blowup for solutions of the nonlinear Schrödinger equation at critical dimension},
        journal = {Phys. Rev. A},
        volume  = {38},
        pages = {3837–3843},
        year = {1988}
}

@article{Perelman-2001-nls-d1,
        author = {G. Perelman},
        title = {On the formation of singularities in solutions of the critical nonlinear Schrödinger equation.},
        journal = {Ann. Henri Poincaré},
        volume  = {2},
        number = {No. 4},
        pages = {605-673},
        year = {2001}
}

@article{Merle-Raphael-crit-nls-2005,
        author = {F. Merle and P. Raphaël},
        title = {The blow up dynamics and upper bound on the blow up rate for the critical nonlinear Schrödinger equation.},
        journal = {Ann. of Math.},
        volume  = 161,
        pages = {157–222},
        year = {2005}
}

@article{Fibich-Merle-Raphael-spec-prop-2006,
        author = {G. Fibich and F. Merle and P. Raphaël},
        title = {Proof of a spectral property related to the singularity formation for the $L^2$ critical nonlinear Schrödinger equation.},
        journal = {Phys. D},
        volume  = {220},
        pages = {1-13},
        year = {2006}
}

@article{Merle-Raphael-profile-quant-2005,
        author = {F. Merle and P. Raphaël},
        title = {Profiles and quantization of the blow up mass for critical nonlinear Schrödinger equation.},
        journal = {Commun. Math. Phys.},
        volume  = {253},
        number = {No. 3},
        pages = {675-704},
        year = {2005}
}

@article{Krieger-Schlag-Tataru-renorm-2008,
        author = {J. Krieger and W. Schlag and D. Tataru},
        title = {Renormalization and blow up for charge one equivariant critical wave maps.},
        journal = {Invent. Math.},
        volume  = {171},
        pages = {543–615},
        year = {2008}
}

@article{Gustafson-Nakanishi-Tsai-asympt-2008,
        author = {S. Gustafson and K. Nakanishi and T.-P. Tsai},
        title = {Asymptotic stability, concentration and oscillations in harmonic map heat flow, Landau Lifschitz and Schrödinger maps on $\RR^2$.},
        journal = {Comm. Math. Phys.},
        volume  = {300},
        pages = {205-242},
        year = {2010}
}

@article{Weinstein-1983,
        author = {M.I. Weinstein},
        title = {Nonlinear Schrödinger equations and sharp interpolation estimates.},
        journal = {Comm. Math. Phys.},
        volume  = {87},
        pages = {567–576},
        year = {1983}
}

@article{Bourgain-Wang-1998,
        author = {J. Bourgain and W. Wang},
        title = {Construction of blowup solutions for the nonlinear Schrödinger equation with critical nonlinearity.},
        journal = {Ann. Sc. Norm. Super. Pisa, Cl. Sci.},
        volume  = {IV, Ser. 25},
        number = {No. 1-2},
        pages = {197-215},
        year = {1997}
}

@article{Martel-Merle-2002,
        author = {Y. Martel and F. Merle},
        title = {Stability of blow-up profile and lower bounds for blow-up rate for the critical generalized KdV equation.},
        journal = {Ann. Math.},
        volume  = {(2) 155},
        number = {No. 1},
        pages = {235-280},
        year = {2002}
}

@article{Martel-Pilod-2024,
        author = {Y. Martel and D. Pilod},
        title = {Finite point blowup for the critical generalized Korteweg-de Vries equation.},
        journal = {Ann. Sc. Norm. Super. Pisa, Cl. Sci.},
        volume  = {(5) 25},
        number = {No. 1},
        pages = {371-425},
        year = {2024}
}

@article{Krieger-Schlag-Tataru-2009,
        author = {J. Krieger and W. Schlag and D. Tataru},
        title = {Slow blow-up solutions for the $H^1(\RR^3)$ critical focusing semilinear wave equation.},
        journal = {Duke Math. J.},
        volume  = {147},
        number = {No. 1},
        pages = {1-53},
        year = {2009}
}

@article{Krieger-Schlag-2014,
        author = {J. Krieger and W. Schlag},
        title = {Full range of blow up exponents for the quintic wave equation in three dimensions.},
        journal = {J. Math. Pures Appl.},
        volume  = {(9) 101},
        number = {No. 6},
        pages = {873-900},
        year = {2014}
}

@article{Donninger-Huang-Krieger-Schlag-2014,
        author = {R. Donninger and M. Huang and J. Krieger and W. Schlag},
        title = {Exotic blowup solutions for the $u^5$ focusing wave equation in $\RR^3$.},
        journal = {Mich. Math. J.},
        volume  = {63},
        number = {No. 3},
        pages = {451-501},
        year = {2014}
}

@article{Krieger-Schlag-2009,
        author = {J. Krieger and W. Schlag},
        title = {Non-generic blow-up solutions for the critical focusing NLS in $1-D$.},
        journal = {J. Eur. Math. Soc.},
        volume  = {11},
        number = {No. 1},
        pages = {1-125},
        year = {2009}
}

@article{Jendrej-2017,
        author = {J. Jendrej},
        title = {Construction of type II blow-up solutions for the energy-critical wave equation in dimension 5.},
        journal = {J. Funct. Anal.},
        volume  = {272},
        number = {No. 3},
        pages = {866-917},
        year = {2017}
}

@article{Kato-1983,
        author = {T. Kato},
        title = {On the Cauchy problem for the (generalized) Korteweg–de Vries equation.},
        journal = {Stud. Appl. Math.},
        volume  = {8},
        pages = {93–128},
        year = {1983}
}

@article{Jendrej-Lawrie-Rodriguez-2022,
        author = {J. Jendrej and A. Lawrie and C. Rodriguez},
        title = {Dynamics of bubbling wave maps with prescribed radiation.},
        journal = {Ann. Sci. Éc. Norm. Supér.},
        volume  = {(4) 55},
        number = {No. 4},
        pages = {1135-1198},
        year = {2022}
}

@article{Raphael-2005,
        author = {P. Raphael},
        title = {Stability of the log-log bound for blow up solutions to the critical nonlinear Schrödinger equation},
        journal = {Math. Ann.},
        volume  = {331},
        number = {No. 3},
        pages = {577-609},
        year = {2005}
}

@article{Yang-Roudenko-Zhao-blowup-dynamics-2018,
        author = {K. Yang and S. Roudenko and Y. Zhao},
        title = {Blow-up dynamics and spectral property in the L2-critical nonlinear Schrödinger equation in high dimensions},
        journal = {Nonlinearity},
        volume  = {31},
        number = {No. 9},
        pages = {4354-4392},
        year = {2018}
}

@article{Kim-Kihun-et-al-2024,
        author = {K. Kim and S. Kwon and S.-J. Oh},
        title = {Blow-up dynamics for radial self-dual Chern-Simons-Schrödinger equation with prescribed asymptotic profile},
        journal = {Preprint, arXiv:2409.13274},
        year = {2024}
}

@book{Linares-Ponce,
  author = {F. Linares and G. Ponce},
  year = {2015},
  title = {Introduction to Nonlinear Dispersive Equations},
  publisher = {Springer},
  edition = {2}
}
\end{document}